\documentclass[a4paper]{article}

\usepackage{amsfonts}
\usepackage{amssymb,amsthm,amsmath,graphicx,bm,ebezier,amsthm}
\usepackage{hyperref}
\usepackage{caption}
\usepackage[normalem]{ulem}
\usepackage{url,lineno}
\usepackage[utf8]{inputenc}
\usepackage{arydshln}
\usepackage[ruled,vlined,linesnumbered]{algorithm2e}
\usepackage[noend]{algpseudocode}
\usepackage{lscape}
\usepackage{multirow}
\usepackage[dvipsnames]{xcolor}
\usepackage{stmaryrd}

\vfuzz10pt \hfuzz10pt

\usepackage{todonotes}

\newtheorem{theorem}{Theorem}
\newtheorem{proposition}[theorem]{Proposition}
\newtheorem{corollary}[theorem]{Corollary}
\newtheorem{lemma}[theorem]{Lemma}

\newtheorem{example}[theorem]{Example}
\newtheorem{remark}[theorem]{Remark}

\def\CaH{\mathcal{H}}
\def\CaC{\mathcal{C}}
\def\CaA{\mathcal{A}}

\def\msg{\operatorname{msg}}
\def\rank{\operatorname{rank}}
\def\g{\operatorname{g}}
\def\m{\operatorname{m}}
\def\r{\operatorname{r}}
\def\e{\operatorname{e}}
\def\N{\operatorname{N}}

\def\Fb{\operatorname{Fb}}
\def\SG{\operatorname{SG}}
\def\PF{\operatorname{PF}}
\def\Ap{\operatorname{Ap}}
\def\M{\operatorname{M}}
\def\AMED{\operatorname{\CaA MED}}
\def\MED{\operatorname{MED}}

\title{Affine semigroups without consecutive small elements}
\date{}
\author{J. C. Rosales, R. Tapia-Ramos and A. Vigneron-Tenorio}

\begin{document}
\maketitle

\abstract{An $\CaA$-semigroup is a numerical semigroup without consecutive small elements. This work generalizes this concept to finite-complement submonoids of an affine cone $\CaC$. We develop algorithmic procedures to compute all $\CaA$-semigroups with a given Frobenius element (denoted by $\CaA(f)$), and with fixed Frobenius element and multiplicity. Moreover, we analyze the $\CaA(f)$-systems of generators.
Furthermore, we study $\CaA$-numerical semigroups with maximal embedding dimension, fixed Frobenius number and multiplicity, providing an algorithm for their computation and a graphical classification.
}

{\small
{\it Key words:} Affine semigroup, Apéry set, Arf numerical semigroup, $\CaC$-semigroup, Frobenius element, $\MED$-numerical semigroup, numerical semigroup, saturated numerical semigroup, tree of semigroups.

{2020 \it Mathematics Subject Classification:} 20M14, 11D07, 05C05.}

\section*{Introduction}

Let $\mathbb N$ be the set of all non-negative integers, and consider $[k]$ as the set $\{1,\ldots ,k\}$. Given $p\in \mathbb N\setminus \{0\}$, an affine semigroup $S$ in $\mathbb N^p$ is a non-finite subset of $\mathbb N^p$ closed under addition, containing the zero element, and such that there exists a finite subset $A=\{a_1,\ldots ,a_k\}\subset \mathbb N^p$ with $S=\{\sum_{i\in [k]}\lambda_i a_i\mid \lambda_1,\ldots ,\lambda_k\in \mathbb N \}$,
this set is denoted by $\langle A \rangle$, and it is named a generating set of $S$.
If this set is minimal with respect to inclusion, it is the unique minimal generating set of $S$ and is denoted by $\msg(S)$. The minimal generating set exists for any affine semigroup (see \cite{rosales1999finitely}), and its cardinality, called the embedding dimension of $S$, is denoted by $\e(S)$.

Given $\mathcal C\subseteq \mathbb N^p$ a non-negative integer cone, an affine semigroup is a $\mathcal C$-semigroup if the minimal integer cone containing $S$ is $\mathcal C$, and $\CaH(S)=\mathcal C\setminus S$ is a finite set. We assume that the cone has at least $p$ extremal rays. The canonical basis of $\mathbb N^p$  is $\Lambda=\{e_1,\ldots ,e_p\}$. In case $p=1$, then $S$ is a numerical semigroup.

Several invariants of $\CaC$-semigroups are needed for this work. Consider a $\CaC$-semigroup $S$, and a fixed monomial order $\preceq$ on $\mathbb N^p$, that is, a total order on $\mathbb N^p$ compatible with addition, where $0 \preceq x$ for any $x\in \mathbb N^p$ (see \cite{cox1997ideals}), the Frobenius element of $S$ is defined as $\Fb(S)=\max_\preceq \CaH(S)$. When $\CaH(S)$ is empty, $\Fb(S)=-\sum_{i\in [p]} e_i$.  Any $x\in S$ is called a small element of $S$ if $x\prec \Fb(S)$, and the set of all small elements of $S$ is denoted by $\N(S)$. The multiplicity of $S$ is $\m(S)=\min_\preceq (S\setminus\{0\})$, and its ratio $\r(S)=\min_\preceq \big(S\setminus\langle \m(S) \rangle\big)$.

The concept of $\CaA$-numerical semigroup was introduced in \cite{rosales2023numerical}: a numerical semigroup $S$ is called an $\mathcal A$-semigroup if, for any $x\in \mathbb N$ where $x<f$, the condition $\{x,x+1\}\not\subset S$ holds. This work expands this definition to $\CaC$-semigroup as: a $\CaC$-semigroup $S\subseteq\CaC$ is an $\CaA$-semigroup when, for every $s\in S$ and $e\in \Lambda$, the set $\{s,s+e\}$ is not contained in $\N(S)$. Given $x,y\in \mathbb N^p$, we say that $x$ and $y$ are consecutive if $y-x\in \Lambda$, or $x-y\in \Lambda$. Thus, an $\CaA$-semigroup is a $\CaC$-semigroup without consecutive small elements. Fixed a cone $\CaC$, the set of all $\CaC$-semigroups with Frobenius element $f$ is denoted by $\CaA(f)$.

One of the main objectives of this work is to study the fundamental properties of the set $\CaA(f)$. We prove that $\CaA(f)$ is a covariety, and we use it to provide some algorithmic methods to compute it. In particular, fixed $f\in \CaC$, a finite tree containing all the $\CaA$-semigroups with Frobenius $f$ is introduced. These $\CaA$-semigroups are obtained by joining some special gaps to a given $\CaA$-semigroup. To improve this computation, we propose an approach to get these special gaps from their father's special gaps.

A partition of $\CaA(f)$ is defined via the subsets $\CaA(f,m) = \{S \in \CaA(f) \mid \m(S) = m\}$. It is shown that $\CaA(f)$ is the finite union of such subsets, where each value of $m$ satisfies specific properties. Since the set $\CaA(f,m)$ is a ratio-covariety, we construct an associated tree and provide an algorithm to compute it. Furthermore, we introduce an additional partition of $\CaA(f, m)$ based on certain subsets of $\CaC$ determined by the parameters $f$, $m$, and the fixed monomial order. The above different partitions allow us to develop other algorithms for computing $\CaA(f)$.

A set $X\subset \CaC$ is an $\CaA(f)$-set if there is a $S\in \CaA(f)$ such that $X\subseteq \N(S)\setminus\{0\}$. We show that given $X$ an $\CaA(f)$-set, there exists the smallest element $S$ in $\CaA(f)$ containing $X$. This $\CaA$-semigroup is denoted by $\CaA(f)[X]$ and $X$ is called an $\CaA(f)$-system of generators of $S$. We prove that every semigroup in $\CaA(f)$ admits a unique minimal $\CaA(f)$-system of generators. The cardinality of this minimal $\CaA(f)$-system of generators is known as the $\CaA(f)$-rank of $S$. The $\CaA$-semigroups with $\CaA(f)$-rank equal to zero, one, or two are also characterized in this work.

Focus on numerical semigroups; it is well-known that the embedding dimension of a numerical semigroup is upper-bounded by $\m(S)$. The numerical semigroups whose embedding dimension is equal to their multiplicity are called semigroups with maximal embedding dimension, and are denoted as $\MED$-semigroups. In connection with this class of semigroups, a long list of publications exists where $1$-dimensional local analytically irreducible domains are studied via their value semigroups. From this approximation, several properties of such rings have been considered, including being an $\MED$-semigroup, Arf numerical semigroup or saturated (see \cite{Barucci}, \cite{Lipman}, \cite{Sally}, \cite{zariski1&2}, and \cite{zariski3}). Based on the above relationship, concepts such as the numerical $\MED$-semigroup, Arf numerical semigroup, and saturated numerical semigroup were born. Following the terminology introduced in \cite{rosales2023numerical}, an $\AMED$-semigroup is a numerical $\MED$-semigroup which is also $\CaA$-semigroup. Denoting by $\AMED (f,m)$ the set of numerical $\AMED$-semigroup with Frobenius number $f$ and multiplicity $m$, we prove that $\AMED (f,m)$ is a ratio-covariety, and we provide an algorithm for computing it. We also prove that the set of Arf numerical semigroups with  Frobenius number $f$ ($\operatorname{Arf}(f)$), and saturated numerical semigroups with Frobenius number $f$ ($\operatorname{Sat}(f)$) are covarieties, and $\operatorname{Sat}(f)\subseteq \operatorname{Arf}(f)\subseteq \CaA(f)$.

This work is organized into five sections: Section \ref{Sec1} introduces a tree of $\CaA$-semigroups with a fixed Frobenius element, and discusses some of their properties. In Section \ref{Sec2}, we study the minimal $\CaA(f)$-system of generators, and $\CaA$-semigroups with $\CaA(f)$-rank equal to zero, one, or two. Sections \ref{Sec3} and \ref{Sec4} describe some partitions of the sets $\CaA(f)$ and $\CaA(f,m)$, providing algorithms to compute them. Finally, Section \ref{Sec5} focuses on numerical $\MED$-semigroups, Arf numerical semigroups, and saturated numerical semigroups, which are also $\CaA$-semigroups. The results of this work are illustrated with several examples.

\section{The set $\CaA(f)$ and its associated tree}\label{Sec1}

Let $\Lambda=\{e_1,\ldots, e_p\}$ be the set of canonical basis vectors of $\mathbb{N}^p$, and $\preceq$ be a monomial order on $\mathbb{N}^p$, recall that an $\CaA$-semigroup $S\subseteq\mathbb{N}^p$ is a $\CaC$-semigroup such that for every $s\in S$ and every $e\in \Lambda$, if $s+e\in S$, then $s+e\notin \N(S)$. Also, given $f \in \CaC \setminus \{0\}$,
\[
\CaA(f)=\{S \text{ is a } \CaA\text{-semigroup}\mid \Fb(S)=f\}.
\]
This section analyses the set $\CaA(f)$ by classifying it graphically into a tree, which leads to an algorithm for computing $\CaA(f)$.

Since $\N(S)$ depends on the monomial order $\preceq$ choice, the set $\CaA(f)$ also does, as shown in the following example.

\begin{example}
Consider the positive cone $\CaC$ spanned by $\{(1,0), (1,1), (1,2)\}$ and the $\CaC$-semigroup $S$ whose set of gaps is
\[
\CaH(S)=\{(1,0), (1,1), (2,0), (2,1), (2,2), (3,1), (4,0), (4,1), (4,2), (5,0)\}.
\]
Let $\preceq_{glex}$ be the degree lexicographical order, and $\preceq_{M}$ be the matrix order defined by the matrix $M=\left(\begin{array}{cc} 1 & 1\\ 2 & 0\end{array}\right)$. Note that for any $a,b\in \mathbb N^2$, $a\preceq_{M} b$ if $aM\preceq_{lex} bM$ (see \cite{cox1997ideals}). We obtain that the set of small elements of $S$ is
$$\N_{\preceq_{glex}}(S)=\{(0,0), (1,2), (2,3), (3,0), (3,2), (5,1), (6,0)\},$$
with respect to $\preceq_{glex}$,
and $\N_{\preceq_{M}}(S)=\N_{\preceq_{glex}}(S)\cup\{(7,0)\}$ respect to $\preceq_{M}$.
Thus, $S$ is an $\CaA$-semigroup for $\preceq_{glex}$, but it is not an $\CaA$-semigroup for $\preceq_{M}$, since $(6,0) + (1,0) = (7,0) \in \N_{\preceq_{M}}(S)$.
\end{example}

In accordance with the terminology established in \cite{moreno2024covariety}, a covariety $V$ is a non-empty family of $\CaC$-semigroups satisfying the following conditions:
\begin{itemize}
\item[(i)] There exists a minimal element in $V$ with respect to inclusion, denoted by $\min_{\subseteq}V$.
\item[(ii)] If $S,T\in V$, then $S\cap T\in V$.
\item[(iii)] If $S\in V\setminus\{\min_{\subseteq}V\}$, then $S\setminus\{\m(S)\}\in V$.
\end{itemize}
Let $f\in \CaC\setminus\{0\}$. Notice that,
\[
\Delta(f)=\operatorname{min}_{\subseteq}\bigl(\CaA(f)\bigr)=\{0\} \cup\{x\in\CaC \mid x\succ f\}.
\]
In the context of numerical semigroups, $\Delta(f)$ is known as a half-line semigroup (see \cite{libroRosales}). At the same time, in a more general setting, it is referred to as an ordinary semigroup (see \cite{rosales2025computational}).

\begin{proposition}\label{A(f)covariedad}
Let $f\in\CaC\setminus\{0\}$. Then, the set $\CaA(f)$ is a covariety.
\end{proposition}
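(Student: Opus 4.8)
The plan is to verify the three defining conditions of a covariety (as listed just above the statement) for the family $\CaA(f)$. Condition (i) is essentially handed to us: the excerpt already displays $\Delta(f)=\min_\subseteq\CaA(f)=\{0\}\cup\{x\in\CaC\mid x\succ f\}$, so I would first check that $\Delta(f)$ is genuinely a member of $\CaA(f)$ (it is a $\CaC$-semigroup with $\Fb=f$, and it has no small elements other than $0$, hence trivially no consecutive small elements), and that it is contained in every other $S\in\CaA(f)$. The containment is the key subpoint: any $S\in\CaA(f)$ must contain $0$ and must contain every $x\succ f$, because $f=\Fb(S)=\max_\preceq\CaH(S)$ forces all gaps to be $\preceq f$, so everything strictly above $f$ lies in $S$.

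For condition (ii) I would take $S,T\in\CaA(f)$ and show $S\cap T\in\CaA(f)$. I expect this to split into three checks. First, $S\cap T$ is again a $\CaC$-semigroup: it is closed under addition and contains $0$, and its complement $\CaH(S\cap T)=\CaH(S)\cup\CaH(T)$ is a finite union of finite sets, hence finite; one should also note the generated cone is still $\CaC$ (this needs $f$, and everything above it, to remain—which it does since both contain $\Delta(f)$). Second, $\Fb(S\cap T)=f$: since $\CaH(S\cap T)=\CaH(S)\cup\CaH(T)$ and both have $\preceq$-maximum $f$, the union also has $\preceq$-maximum $f$. Third, the $\CaA$-property is inherited: the small elements satisfy $\N(S\cap T)\subseteq\N(S)\cap\N(T)$ (anything in $S\cap T$ below $f$ is a small element of both), so if $\{s,s+e\}\subseteq\N(S\cap T)$ then $\{s,s+e\}\subseteq\N(S)$, contradicting that $S$ is an $\CaA$-semigroup.

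For condition (iii) I take $S\in\CaA(f)\setminus\{\Delta(f)\}$ and set $S'=S\setminus\{\m(S)\}$, and I must show $S'\in\CaA(f)$. The first task is to confirm $S'$ is a $\CaC$-semigroup, which amounts to checking closure under addition: removing the multiplicity $\m(S)$—the $\preceq$-smallest nonzero element—cannot break closure, since $\m(S)$ is never a sum of two nonzero elements of $S$ (any such sum would be $\succ\m(S)$, as $0\preceq x$ for all $x$ and the order is compatible with addition). Next, $\Fb(S')=f$: since $S\neq\Delta(f)$, the multiplicity satisfies $\m(S)\prec f$, so removing it only adds a new gap below $f$ and leaves the maximal gap $f$ unchanged. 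Finally, the $\CaA$-property: $\N(S')=\N(S)\setminus\{\m(S)\}\subseteq\N(S)$, and a subset of a set without consecutive small elements still has none, so $S'\in\CaA(f)$.

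\textbf{The main obstacle} I anticipate is the $\CaA$-condition in parts (ii) and (iii)—specifically, the bookkeeping around small elements under the operation of intersection and of removing the multiplicity. The cleanest route is the inclusion $\N(S\cap T)\subseteq\N(S)\cap\N(T)$ in (ii) and $\N(S')\subseteq\N(S)$ in (iii): once these are established, the $\CaA$-property transfers for free, since ``no two consecutive elements'' is inherited by subsets. Verifying these inclusions requires only that the Frobenius element, hence the threshold defining ``small,'' is preserved, which is exactly what the other two sub-checks guarantee. I would therefore organize the proof so that the Frobenius-preservation is settled before invoking it to pin down the small elements.
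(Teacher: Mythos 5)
Your proposal is correct and follows essentially the same route as the paper's proof: verify that $\Delta(f)$ is the minimum, that intersection preserves the $\CaC$-semigroup structure, the Frobenius element, and the $\CaA$-property, and that removing $\m(S)$ does likewise. You supply more detail than the paper (e.g.\ the inclusions $\N(S\cap T)\subseteq\N(S)\cap\N(T)$ and $\N(S\setminus\{\m(S)\})\subseteq\N(S)$, and the observation that $\m(S)$ is never a sum of two nonzero elements), all of which is accurate and merely makes explicit what the paper leaves implicit.
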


\begin{proof}
It suffices to check the last two properties to show that $\CaA(f)$ is a covariety.
The intersection of two $\CaC$-semigroups $S$ and $T$ is itself a $\CaC$-semigroup, and its Frobenius element corresponds to $\max_\preceq\{\Fb(S), \Fb(T)\}$. Moreover, the property of being an $\CaA$-semigroup is compatible with the intersection.
Finally, for any $S\in \CaA(f)\setminus\{\Delta(f)\}$, we obtain that $S \setminus \{\m(S)\}$ is also a $\CaC$-semigroup, and since $S$ is an $\CaA$-semigroup, removing $\m(S)$ preserves the $\CaA$-semigroup structure. Thus, $\CaA(f)$ is a covariety.
\end{proof}

The properties of covarieties allow us to describe the set $\CaA(f)$ through an associated directed graph $G(\CaA(f))$, whose vertex set is $\CaA(f)$, and where an ordered pair $(S,T)\in \CaA(f)$ with $S\ne T$ forms an edge if and only if $T = S \setminus \{\m(S)\}$. From now on, if $(S,T)$ is an edge, then we say that $S$ is a child of $T$.

A graph is a tree if there exists a vertex $R$ (the root) such that any vertex $S$ is connected to $R$ by a unique sequence of edges \[(S_0, S_1), (S_1, S_2), \ldots, (S_{n-1}, S_n),\]
where $S_0 = S$, and $S_n = R$.

We establish the following result by extending the scope of \cite[Proposition 2.3]{moreno2024covariety}.

\begin{proposition}\label{prop:grafoA_f}
The graph $G(\CaA(f))$ is a tree with root $\Delta(f)$.
\end{proposition}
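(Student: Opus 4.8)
The plan is to exploit that $\CaA(f)$ is a covariety (Proposition \ref{A(f)covariedad}) in order to turn the father relation $S\mapsto S\setminus\{\m(S)\}$ into a well-defined map on $\CaA(f)\setminus\{\Delta(f)\}$, and then to show that iterating this map from any vertex reaches $\Delta(f)$ in finitely many steps along a unique edge-path. By the definition of tree given above, this is exactly what has to be established.

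First I would record the immediate consequences of the edge definition. Given a vertex $S\in\CaA(f)$, an edge $(S,T)$ forces $T=S\setminus\{\m(S)\}$; since this set is uniquely determined by $S$, every vertex has out-degree at most one. By covariety property (iii), for $S\neq\Delta(f)$ we have $S\setminus\{\m(S)\}\in\CaA(f)$, so every non-root vertex has exactly one outgoing edge, namely the one to its father. For the root itself I would note that $\Delta(f)\setminus\{\m(\Delta(f))\}\notin\CaA(f)$: removing the $\preceq$-smallest nonzero element of $\Delta(f)$ turns $\m(\Delta(f))$ into a gap, and $\m(\Delta(f))\succ f$, so the Frobenius element can no longer equal $f$. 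Hence $\Delta(f)$ has out-degree zero, which is what makes it a candidate root.

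The heart of the argument is termination, for which I would use $|\N(S)|$, the number of small elements, as a strictly decreasing measure along edges. The key observation is that if $S\neq\Delta(f)$, then, since $\Delta(f)=\min_\subseteq\CaA(f)$ forces $S\supseteq\Delta(f)$ while $S\neq\Delta(f)$, the semigroup $S$ contains some nonzero element $\prec f$; therefore its multiplicity $\m(S)=\min_\preceq(S\setminus\{0\})$ satisfies $\m(S)\prec f$, so $\m(S)\in\N(S)\setminus\{0\}$ and passing to the father removes exactly one small element. Because $\N(S)$ is finite and always contains $0$, repeated application of the father map must reach a vertex $S^{*}$ with $\N(S^{*})=\{0\}$. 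I would then identify this terminus: using $S^{*}\supseteq\Delta(f)$ together with $f\notin S^{*}$ and the absence of nonzero small elements, one gets $S^{*}=\{0\}\cup\{x\in\CaC\mid x\succ f\}=\Delta(f)$.

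Putting these together yields the tree structure: from any $S$, following its forced outgoing edges produces a finite sequence of edges terminating at $\Delta(f)$, and this sequence is unique because the father of each non-root vertex is unique while $\Delta(f)$ has no father. The main obstacle I anticipate is not the uniqueness, which is immediate from the edge definition, but the termination-and-identification step: specifically, verifying that $\m(S)$ is genuinely a small element for every $S\neq\Delta(f)$ and that $\N(S)=\{0\}$ characterizes $\Delta(f)$. Both hinge on the interplay between the monomial order $\preceq$, the Frobenius element $f$, and the containment $S\supseteq\Delta(f)$, so I would take care to state these two facts as explicit auxiliary observations before assembling the conclusion.
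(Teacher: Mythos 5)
Your proof is correct and follows essentially the same route as the paper's: iterate the father map $S\mapsto S\setminus\{\m(S)\}$ and show the resulting chain terminates at $\Delta(f)$, with uniqueness coming from the fact that each non-root vertex has a unique father. You in fact supply more detail than the paper does on the termination step (the strictly decreasing measure $|\N(S)|$, the verification that $\m(S)\prec f$ for $S\neq\Delta(f)$, and the identification of the terminus), which the paper leaves implicit in the phrase ``the sequence is stationary.''
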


\begin{proof}
Let $S\in \CaA(f)\setminus\{\Delta(f)\}$, and consider the sequence $\{S_i\}_{i\in\mathbb{N}}$ given by $S_0 = S$ and, for each $i\in \mathbb{N}$, $S_{i+1} = S_{i} \setminus \{\m(S_{i})\}$ if $S_{i} \ne \Delta(f)$, and $S_{i+1} = \Delta(f)$ otherwise.
Since the sequence is stationary, it defines a sequence of edges that connect $S$ to $\Delta(f)$. The uniqueness of this sequence follows from the fact that each $\CaC$-semigroup has a unique multiplicity, ensuring no cycles occur.
\end{proof}

For any $\CaC$-semigroup $S$, we say that an element $x \in \CaH(S)$ is a pseudo-Frobenius element if $x + s \in S$ for all $s \in S \setminus \{0\}$. The set of all pseudo-Frobenius elements of $S$ is denoted by $\PF(S)$. A pseudo-Frobenius element $x \in \PF(S)$ is a special gap if $2x \in S$, and the set of all special gaps is denoted by $\SG(S)$. One of the main properties of this invariant, justifying its introduction, is that $S \cup \{x\}$ is a $\CaC$-semigroup if and only if $x$ is a special gap of $S$.

As an immediate consequence of the above proposition, and using \cite[Proposition 2.4]{moreno2024covariety}, we obtain the following result, which provides a characterization of the set of children of a given $\CaC$-semigroup in $G(\mathcal{A}(f))$.

\begin{proposition}
Let $f\in \CaC\setminus\{0\}$ and $T \in \CaA(f)$. The set of children of $T$ in the tree $G(\CaA(f))$ is the set,
\[\bigl\{ T \cup \{x\} \mid x \in \SG(T)\setminus(\Lambda \cup \{f\}),\, x \prec \m(T), \text{ and } x +e \notin \N(T) \text{ for all } e\in \Lambda \bigr\}.\]
\end{proposition}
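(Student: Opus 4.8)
The plan is to deduce this characterization from the general description of children in a covariety tree, namely \cite[Proposition 2.4]{moreno2024covariety}, which together with Proposition \ref{prop:grafoA_f} tells us that the children of $T$ in $G(\CaA(f))$ are exactly the $\CaC$-semigroups of the form $T\cup\{x\}$ with $x\in\SG(T)$, $x\prec\m(T)$, and $T\cup\{x\}\in\CaA(f)$. Indeed, $S$ is a child of $T$ precisely when $T=S\setminus\{\m(S)\}$, so $S=T\cup\{x\}$ with $x=\m(S)$; the requirement $x\prec\m(T)$ guarantees that $x$ becomes the new multiplicity, while $x\in\SG(T)$ is exactly the condition for $T\cup\{x\}$ to be a $\CaC$-semigroup. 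Hence the whole task reduces to translating the membership $T\cup\{x\}\in\CaA(f)$ into explicit conditions on $x$, under the standing hypotheses $x\in\SG(T)$ and $x\prec\m(T)$.

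First I would analyse the Frobenius element. Since $\CaH(T\cup\{x\})=\CaH(T)\setminus\{x\}$ and $\Fb(T)=f=\max_\preceq\CaH(T)$, removing $x$ leaves the $\preceq$-maximum of the gap set unchanged if and only if $x\neq f$; thus $\Fb(T\cup\{x\})=f\iff x\neq f$, which explains the factor $\setminus\{f\}$. Assuming $x\neq f$, the relation $x\preceq f$ (as $x\in\CaH(T)$) upgrades to $x\prec f$, so $x$ is a small element of $T\cup\{x\}$ and in fact $\N(T\cup\{x\})=\N(T)\cup\{x\}$.

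Next I would impose the $\CaA$-semigroup property, that is, that $\N(T\cup\{x\})$ contain no pair of consecutive elements. Because $T$ is already an $\CaA$-semigroup, the only pairs that can fail are those involving the newly added element $x$. For pairs of the form $\{x,x+e\}$ with $e\in\Lambda$, both members lie in $\N(T\cup\{x\})$ iff $x+e\in\N(T)\cup\{x\}=\N(T)$, and avoiding all of them is precisely the stated condition $x+e\notin\N(T)$ for every $e\in\Lambda$. For pairs of the form $\{x-e,x\}$, suppose $x-e\in\N(T\cup\{x\})$ for some $e\in\Lambda$; then $x-e\in T\cup\{x\}$ and, since $x-e\neq x$, we get $x-e\in T$. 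If $x-e\neq 0$ this contradicts the minimality of $\m(T)$, because $0\prec x-e\prec x\prec\m(T)$ would exhibit a nonzero element of $T$ below the multiplicity; the only remaining possibility is $x-e=0$, i.e. $x=e\in\Lambda$, in which case $\{0,x\}$ is a consecutive pair of small elements. Therefore these lower obstructions vanish exactly when $x\notin\Lambda$, which accounts for the factor $\setminus\Lambda$.

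Putting these together, $T\cup\{x\}\in\CaA(f)$ is equivalent to $x\notin\Lambda\cup\{f\}$ together with $x+e\notin\N(T)$ for all $e\in\Lambda$, yielding exactly the claimed set of children. The delicate point, and the one I would treat most carefully, is this last argument on pairs $\{x-e,x\}$: it is what simultaneously forces the exclusion $x\notin\Lambda$ and shows that no extra hypothesis is needed to rule out consecutive small elements below $x$, both facts hinging on the interplay between $x\prec\m(T)$ and the minimality of $\m(T)$.
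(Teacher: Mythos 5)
Your proposal is correct and follows essentially the same route as the paper: both reduce to the covariety description of children ($S=T\cup\{x\}$ with $x=\m(S)\in\SG(T)$, $x\prec\m(T)$) and then translate $T\cup\{x\}\in\CaA(f)$ into the stated conditions. In fact you spell out two points the paper's proof leaves implicit --- why $x\neq f$ and, via the pairs $\{x-e,x\}$ and the minimality of $\m(T)$, why the exclusion $x\notin\Lambda$ is exactly what is needed and no further condition below $x$ arises --- so no gap remains.
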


\begin{proof}
Suppose that $S$ is a child of $T$, which implies that $T \cup \{x\} = S$, where $x = \m(S)$, and thus $x \in \SG(T)\setminus(\Lambda \cup \{f\})$. From the definition of $\m(S)$, it follows that $x \prec \m(T)$. Since $T$ and $S$ belong to $\CaA(f)$, then $ x +e \notin \N(S)=\N(T)\cup\{x\}$ for any $e\in \Lambda$.

Conversely, suppose that $S = T \cup \{x\}$, where $x \in \SG(T)\setminus(\Lambda \cup \{f\})$, $x \prec \m(T)$, and, for every $e\in \Lambda$, $x +e \notin \N(T)$. Trivially, $x=\m (S)$. Since $T$ is an $\CaA$-semigroup and $\N(S)=\N(T)\cup\{x\}$, we conclude that $S\in \CaA(f)$.
\end{proof}

From the preceding results, we propose Algorithm \ref{ComputeCaA(Fb=f)} to construct the set $\CaA(f)$.

\begin{algorithm}[H]
\caption{Computing the set $\CaA(f)$.}\label{ComputeCaA(Fb=f)}
\KwIn{A non-negative integer cone $\CaC$ and $f\in\CaC\setminus\{0\}$.}
\KwOut{The set  $\CaA(f)$.}
$A \leftarrow \{\Delta(f)\}$\;
$X\leftarrow A$\;

\While {$A\ne\emptyset$}{
    $Y \leftarrow \emptyset$\;
    $B\leftarrow A$\;
    \While{$B\neq \emptyset$ }
    {$T \leftarrow \text{First}(B)$\;
$C \leftarrow \{x\in SG(T)\setminus(\Lambda \cup \{f\})\mid x\prec \m(T) \text{ and } x +e \notin \N(T) \text{ for all } e \in \Lambda\}$\; \label{lineaAlg1}
    $Y \leftarrow Y\cup\{T\cup\{x\}\mid x\in C\}$\;
    $B \leftarrow B\setminus \{T\}$\;
    }
    $A \leftarrow Y$\;
    $X \leftarrow X\cup Y$\;
    }
\Return{$X$}
\end{algorithm}

Determining the set $\SG(S)$ could involve a high computational cost, since it requires to verify: $x+s \in S$, and $2x \in S$, for all $x \in \CaH(S)$ and $s \in \msg(S)$ with $s\prec \Fb(S)$.
Moreover, this procedure is repeated for several $\CaC$-semigroups (see Line \ref{lineaAlg1} of Algorithm \ref{ComputeCaA(Fb=f)}). We propose an approach to compute $\SG(S \cup \{x\})$ from $\SG(S)$, for any $x \in \SG(S)$. For this purpose, we introduce the notion of the Apéry set from a classical tool presented in \cite{apery1946branches}. Given a $\CaC$-semigroup $S$ and an element $b\in S\setminus\{0\}$, the Apéry set of $S$ with respect to $b$ is defined as
\[
\Ap(S,b)=\{a\in S\mid a-b\in \CaH(S)\}.
\]
Since $\Ap(S,b)\setminus \{0\} = S\cap \big(\CaH(S)+\{b\}\big)$, it follows that $\Ap(S,b)$ is finite for every $b\in S\setminus \{0\}$, and $\Ap\big(\Delta(f),m\big)=(\CaH(\Delta(f))+\{m\})\cup \{0\}$ where $m=\m(\Delta(f))$.

For any $x,y\in L\subseteq\mathbb N^p$, consider the partial order $x \leq_L y$ if $y - x \in L$. The relation between the Apéry set of $S$ and its pseudo-Frobenius set is shown below.

\begin{proposition}\cite[Proposition 16]{garcia2020pseudo}.
Let $S$ be a $\CaC$-semigroup. Then,
\[
\PF(S)=\{a-b\mid a\in \operatorname{maximals}_{\leq_S}\Ap(S,b) \}.
\]
\end{proposition}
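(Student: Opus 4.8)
The plan is to fix an element $b\in S\setminus\{0\}$ and establish the claimed equality by proving the two inclusions separately, relying throughout on the dichotomy that any element of $\CaC$ lies either in $S$ or in $\CaH(S)$. The translations I will use repeatedly are that $a'>_{S}a$ (i.e. $a\leq_S a'$ with $a\neq a'$) means exactly $a'-a\in S\setminus\{0\}$, and that $a\in\Ap(S,b)$ means $a\in S$ together with $a-b\in\CaH(S)$.

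For the inclusion $\supseteq$, I would take $a\in\operatorname{maximals}_{\leq_S}\Ap(S,b)$ and show $a-b\in\PF(S)$. By definition of the Apéry set, $a-b\in\CaH(S)$, so it remains to verify that $(a-b)+s\in S$ for every $s\in S\setminus\{0\}$. Fix such an $s$. Since $a-b\in\CaH(S)\subseteq\CaC$ and $s\in S\subseteq\CaC$, closure of the cone gives $(a-b)+s=a+s-b\in\CaC$, so this element is either in $S$ or in $\CaH(S)$. If it were a gap, then $a+s\in S$ together with $(a+s)-b\in\CaH(S)$ would place $a+s$ in $\Ap(S,b)$; but $(a+s)-a=s\in S\setminus\{0\}$ would then give $a<_S a+s$, contradicting the maximality of $a$. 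Hence $(a-b)+s\in S$, and $a-b\in\PF(S)$.

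For the reverse inclusion $\subseteq$, I would start from $x\in\PF(S)$ and set $a=x+b$. Because $b\in S\setminus\{0\}$, the defining property of pseudo-Frobenius elements forces $a=x+b\in S$, while $a-b=x\in\CaH(S)$; thus $a\in\Ap(S,b)$ and $x=a-b$. To see that $a$ is maximal, suppose for contradiction that some $a'\in\Ap(S,b)$ satisfies $a<_S a'$, say $a'=a+t$ with $t\in S\setminus\{0\}$. Then $a'-b=x+t$, and since $x\in\PF(S)$ and $t\in S\setminus\{0\}$ this forces $x+t\in S$, contradicting $a'-b\in\CaH(S)$. Therefore $a\in\operatorname{maximals}_{\leq_S}\Ap(S,b)$ and $x$ lies in the right-hand set.

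The argument is essentially a bookkeeping exercise, so the only real point of care, and the step I expect to be the main obstacle, is keeping the relevant differences inside the cone $\CaC$, so that the ``either in $S$ or in $\CaH(S)$'' dichotomy is legitimately available at each application; once this is handled, both inclusions reduce to a single use of the defining property of $\PF(S)$ together with the maximality condition.
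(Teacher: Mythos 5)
Your argument is correct: both inclusions are handled cleanly, the dichotomy ``element of $\CaC$ is in $S$ or in $\CaH(S)$'' is legitimately available each time you invoke it (the sums you form stay in $\CaC$ by closure of the cone), and the maximality condition is used exactly where it is needed. Note that the paper itself offers no proof of this statement --- it is quoted from \cite[Proposition 16]{garcia2020pseudo} --- but your proof is the standard argument for that result (the same one used for numerical semigroups in \cite{libroRosales}), so there is nothing to flag.
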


In particular, observe that $\operatorname{maximals}_{\leq_{\Delta(f)}}\left(\Ap\big(\Delta(f),m\big)\right)=\Ap\big(\Delta(f),m\big)$, and recall that $\SG(S)=\{x\in\PF(S)\mid 2x\in S\}$. So, we focus on $\Ap(S,b)$
to determine $\SG(S)$. The following result, inspired by \cite[Lemma 3.5]{moreno2024covariety}, allows us to identify $\SG(S \cup \{x\})$ from $\SG(S)$ using Apéry sets.

\begin{proposition}\label{prop:Ap(Scupx)}
Let $S$ be a $\CaC$-semigroup, $x\in \SG(S)$ and $b\in S\setminus\{0\}$. Then,
\begin{equation}\label{inclusionAp}
\Ap(S\cup\{x\},b)\subseteq \{x\}\sqcup \big(\Ap(S,b)\setminus\{x+b\}\big).
\end{equation}
\end{proposition}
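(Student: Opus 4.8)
The plan is to prove the inclusion element by element, exploiting the fact that passing from $S$ to $S\cup\{x\}$ removes exactly the single gap $x$. First I would record the two structural observations that drive the argument. Writing $S'=S\cup\{x\}$, since $x\in\SG(S)\subseteq\CaH(S)$ we have the disjoint decomposition $S'=S\sqcup\{x\}$ and, correspondingly, $\CaH(S')=\CaH(S)\setminus\{x\}$; the latter identity is legitimate because $x$ is a special gap, so $S'$ is again a $\CaC$-semigroup. I would also note that $\Ap(S,b)\subseteq S$, so $x\notin\Ap(S,b)$ (as $x$ is a gap of $S$); this is what makes the union on the right-hand side genuinely disjoint and justifies the symbol $\sqcup$.

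Next I would take an arbitrary $a\in\Ap(S',b)$, unfold the definition as $a\in S'$ together with $a-b\in\CaH(S')$, and split into two cases according to whether $a=x$. If $a=x$, then $a$ lies in the first part $\{x\}$ of the right-hand side, and there is nothing further to check.

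If $a\neq x$, then from $a\in S'=S\sqcup\{x\}$ I conclude $a\in S$. Using $\CaH(S')=\CaH(S)\setminus\{x\}\subseteq\CaH(S)$, the condition $a-b\in\CaH(S')$ yields both $a-b\in\CaH(S)$ — whence $a\in\Ap(S,b)$ by definition — and $a-b\neq x$, that is, $a\neq x+b$. Combining these gives $a\in\Ap(S,b)\setminus\{x+b\}$, which is the second part of the right-hand side, completing the inclusion.

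I do not expect any genuine obstacle here: the entire content is the bookkeeping identity $\CaH(S\cup\{x\})=\CaH(S)\setminus\{x\}$ together with $\Ap(S,b)\subseteq S$. The one point requiring a little care is verifying the disjointness of the right-hand union, which follows from $x\notin S$. I would also remark, to motivate the one-sided formulation, that the reverse inclusion may fail, since deleting $x$ from the set of gaps can remove $x+b$ from the Apéry set without forcing $x$ itself to enter it; this is presumably why the statement is phrased as an inclusion rather than an equality.
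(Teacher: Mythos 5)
Your proof is correct. The paper actually states this proposition without giving a proof (it only notes that the result is ``inspired by'' a lemma of Moreno-Fr\'{\i}as and Rosales), so there is no in-paper argument to compare against; your direct element-chasing via the identities $S\cup\{x\}=S\sqcup\{x\}$ and $\CaH(S\cup\{x\})=\CaH(S)\setminus\{x\}$, together with $\Ap(S,b)\subseteq S$ to justify the disjointness of the right-hand side, is exactly the intended routine verification. Your closing remark on why equality can fail is also consistent with the paper's observation that \eqref{inclusionAp} becomes an equality precisely when $x-b\in\CaH(S)$: the only element that can be missing from the left-hand side is $x$ itself.
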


Note that the inclusion \eqref{inclusionAp} is an equality when $x-b\in \CaH(S)$.

\begin{example}\label{Ex:Sec1}
Let $\CaC\subset \mathbb N^2$ be the positive integer cone spanned by $\{(12, 1), (7, 4)\}$. Consider $f=(7,2)\in \CaC$, and the degree lexicographical order. By applying Algorithm \ref{ComputeCaA(Fb=f)}, we obtain the tree shown in Figure \ref{fig:tree}. Note that the root of the tree $G(\CaA(f))$ is the $\CaC$-semigroup $\Delta (f)$, and each vertex is labelled with the corresponding joined special gap. For example, the rightmost node $(3,1)$ in the last level of the tree is the $\CaC$-semigroup $\Delta(f) \cup \{(6,2),(5,1), (4,2), (3,1)\}$.

\begin{figure}[h]
    \centering
    \includegraphics[scale=.4]{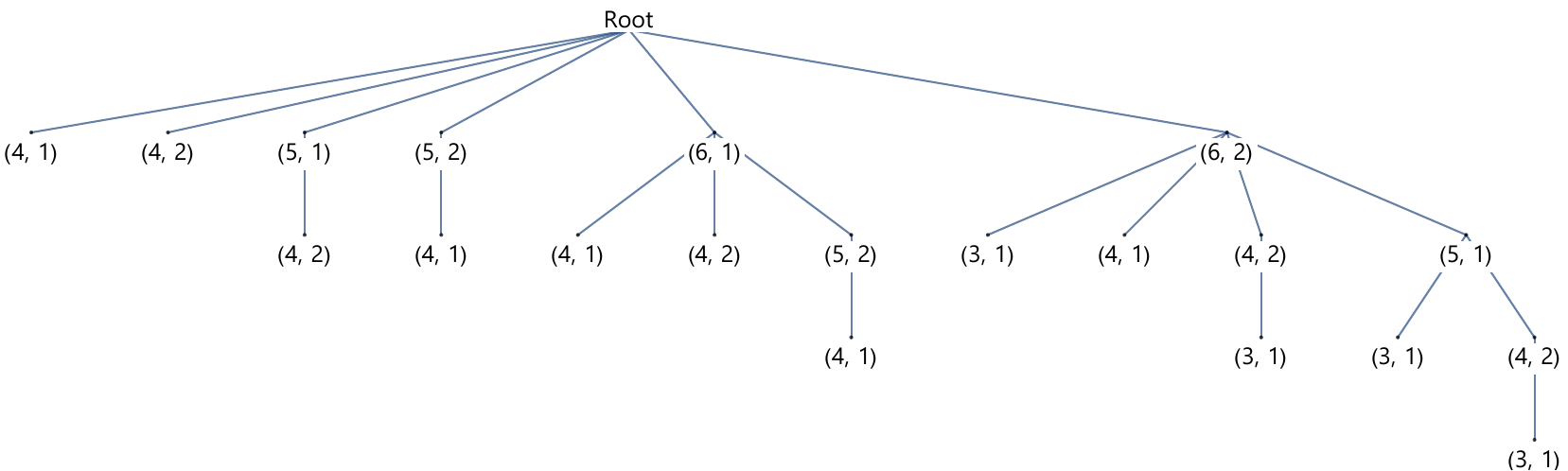}
    \caption{Tree $G(\CaA(f))$ with $f=(7,2)$.}
    \label{fig:tree}
\end{figure}

\end{example}

\section{$\CaA(f)$-systems of generators}\label{Sec2}

Throughout this section, an element $f\in\CaC$ and a monomial order $\preceq$ on $\mathbb N^p$ are fixed. We say that a set $X$ is an $\CaA(f)$-set if $X \cap \Delta(f) = \emptyset$ and there exists $S \in \CaA(f)$ such that $X \subset S$.
Equivalently, $X$ is an $\CaA(f)$-set if and only if there exists $S \in \CaA(f)$ with $X \subseteq \N(S) \setminus \{0\}$.
Furthermore, given an $\CaA(f)$-set $X$, we define $\CaA(f)[X]$ as the intersection of all elements of $\CaA(f)$ containing  $X$.

Since $\CaA(f)$ is a finite covariety, we deduce the following property.

\begin{proposition}\label{prop:CaA(f)[X]}
Let $X$ be an $\CaA(f)$-set. Then, $\CaA(f)[X]$ is the smallest element of $\CaA(f)$ containing $X$, with respect to inclusion.
\end{proposition}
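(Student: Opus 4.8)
The plan is to show that the set defined as an intersection is actually a member of $\CaA(f)$, and then that it is the least member of $\CaA(f)$ containing $X$. First I would introduce the family $\mathcal{F}=\{S\in\CaA(f)\mid X\subseteq S\}$, so that by definition $\CaA(f)[X]=\bigcap_{S\in\mathcal{F}}S$. By the definition of an $\CaA(f)$-set there exists at least one $S\in\CaA(f)$ with $X\subseteq\N(S)\setminus\{0\}\subseteq S$, hence $\mathcal{F}\neq\emptyset$. Since $\CaA(f)$ is finite, $\mathcal{F}$ is a finite nonempty family, and therefore $\CaA(f)[X]$ is a finite intersection.

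The central step is to prove $\CaA(f)[X]\in\CaA(f)$. Proposition \ref{A(f)covariedad} asserts that $\CaA(f)$ is a covariety, so property (ii) of a covariety provides closure under the intersection of two members. A routine induction on $|\mathcal{F}|$ then extends this to the full finite intersection $\bigcap_{S\in\mathcal{F}}S$: at each stage the partial intersection is again an element of $\CaA(f)$, so the process never leaves the covariety, and we conclude $\CaA(f)[X]\in\CaA(f)$. With membership established, the two required properties follow directly. For containment, $X\subseteq S$ for every $S\in\mathcal{F}$, whence $X\subseteq\bigcap_{S\in\mathcal{F}}S=\CaA(f)[X]$. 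For minimality, if $T\in\CaA(f)$ satisfies $X\subseteq T$, then $T\in\mathcal{F}$ by definition, so $\CaA(f)[X]=\bigcap_{S\in\mathcal{F}}S\subseteq T$; thus $\CaA(f)[X]$ is contained in every element of $\CaA(f)$ containing $X$.

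The main obstacle, and really the only genuine subtlety, is ensuring that the intersection of the entire family $\mathcal{F}$ remains inside $\CaA(f)$: the covariety axioms only postulate closure under \emph{binary} intersection, so the finiteness of $\CaA(f)$ is what allows me to reduce to a finite family and push the induction through. Everything else is formal bookkeeping with the definition of $\CaA(f)[X]$ as an intersection.
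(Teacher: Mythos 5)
Your proposal is correct and follows essentially the same route as the paper, which simply remarks that the claim follows because $\CaA(f)$ is a finite covariety: nonemptiness of the family from the definition of an $\CaA(f)$-set, closure of the finite intersection inside $\CaA(f)$ via the binary intersection axiom of a covariety (Proposition \ref{A(f)covariedad}) and induction, and then minimality by the formal argument you give. The paper leaves these details implicit; you have merely spelled them out.
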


Given an $\CaA(f)$-set $X$, we say that $X$ is an $\CaA(f)$-generator system of $S$ if $X$ is an $\CaA(f)$-set and $S = \CaA(f)[X]$. Moreover, if there is no proper subset $Y \subsetneq X$ such that $\CaA(f)[Y] = S$, we say that $X$ is a minimal $\CaA(f)$-system of generators of $S$.
If there is a unique minimal $\CaA(f)$-system of generators of $S$, we denote it by $\msg_{\CaA(f)}(S)$, and the cardinality of it, called the $\CaA(f)$-rank of $S$, is represented by $\rank_{\CaA (f)}(S)$.

The main goal of this section is to prove that every $S \in \CaA(f)$ admits a unique minimal $\CaA(f)$-system of generators, which we explicitly determine. Subsequently, we characterize those $\CaA$-semigroups of $\CaA(f)$-rank equal to zero, one, and two.

We state the following result based on \cite[Proposition 4.2]{moreno2024covariety}.

\begin{theorem}\label{Thrm:msgCaA(f)}
Let $S\in \CaA(f)$. Then,
\[
\msg_{\CaA (f)}(S)=\N(S)\cap\msg(S).
\]
\end{theorem}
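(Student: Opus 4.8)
The plan is to set $X^{*}=\N(S)\cap\msg(S)$ and prove three things: that $X^{*}$ is an $\CaA(f)$-generator system of $S$, that every $\CaA(f)$-generator system of $S$ contains $X^{*}$, and that $X^{*}$ is itself such a system. Together these show that $X^{*}$ is the unique minimal one, and hence equals $\msg_{\CaA (f)}(S)$.

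First, note that $X^{*}\subseteq\N(S)\setminus\{0\}$ (since $0\notin\msg(S)$), so $X^{*}$ is an $\CaA(f)$-set and, by Proposition \ref{prop:CaA(f)[X]}, $T:=\CaA(f)[X^{*}]$ is the smallest element of $\CaA(f)$ containing $X^{*}$; in particular $T\subseteq S$. To show $T=S$ I would argue by contradiction: if $S\setminus T\neq\emptyset$, take $s=\min_{\preceq}(S\setminus T)$. Since $\Delta(f)\subseteq T$, the element $s$ is a nonzero small element of $S$. If $s\in\msg(S)$, then $s\in X^{*}\subseteq T$, a contradiction; otherwise $s=u+v$ with $u,v\in S\setminus\{0\}$, and compatibility of $\preceq$ with addition gives $u,v\prec s$, so by minimality of $s$ both $u$ and $v$ lie in $T$, whence $s=u+v\in T$, again a contradiction. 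Thus $T=S$, and $X^{*}$ is an $\CaA(f)$-generator system of $S$.

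The key step for the remaining inclusions is the lemma that for every $a\in X^{*}$ the set $S\setminus\{a\}$ again belongs to $\CaA(f)$. Here I would verify: closure under addition holds because $a\in\msg(S)$ cannot be written as a sum of two nonzero elements of $S$, so no sum of two elements of $S\setminus\{a\}$ equals $a$; the complement $\CaH(S)\cup\{a\}$ is finite and still spans the cone $\CaC$, so $S\setminus\{a\}$ is a $\CaC$-semigroup; its Frobenius element is unchanged because $a\prec f=\Fb(S)$, which gives $\N(S\setminus\{a\})=\N(S)\setminus\{a\}$; and since the set of small elements only shrinks, no consecutive small elements are created, so $S\setminus\{a\}$ remains an $\CaA$-semigroup. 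I expect the verification that $S\setminus\{a\}$ is still a $\CaC$-semigroup (the closure and cone bookkeeping) to be the main obstacle, although it follows cleanly from $a$ being a minimal generator.

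Granting the lemma, the rest is quick. Let $X$ be any $\CaA(f)$-generator system of $S$; as an $\CaA(f)$-set contained in $S=\CaA(f)[X]$ and meeting neither $\{0\}$ nor $\{x\mid x\succ f\}$ (and not containing $f\notin S$), necessarily $X\subseteq\N(S)\setminus\{0\}$. If some $a\in X^{*}$ satisfied $a\notin X$, then $X\subseteq S\setminus\{a\}\in\CaA(f)$ would force $S=\CaA(f)[X]\subseteq S\setminus\{a\}$, contradicting $a\in S$; hence $X^{*}\subseteq X$. Therefore $X^{*}$ is contained in every $\CaA(f)$-generator system of $S$ and is itself one, so it is the unique minimal $\CaA(f)$-system of generators, giving $\msg_{\CaA (f)}(S)=X^{*}=\N(S)\cap\msg(S)$.
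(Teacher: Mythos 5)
Your proposal is correct and follows essentially the same route as the paper: show $\CaA(f)[\N(S)\cap\msg(S)]=S$ using that $\CaA(f)[X^*]$ contains both $X^*$ and $\Delta(f)$ (the paper notes directly that $\msg(S)\subseteq X^*\cup\Delta(f)$ where you run a minimal-counterexample argument, but this is the same idea), and then establish minimality and uniqueness by observing that removing a small minimal generator keeps $S$ in $\CaA(f)$, exactly as in the paper's final step. Your explicit verification that $S\setminus\{a\}\in\CaA(f)$ for $a\in\N(S)\cap\msg(S)$ fills in a detail the paper leaves implicit, but it is not a different argument.
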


\begin{proof}
Let $S\in \CaA(f)$ and consider $X=\N(S)\cap\msg(S)$. By definition, $X$ is trivially an $\CaA(f)$-set. Let us see that $X$ is a $\CaA(f)$-system of generators of $S$. Since $X\subset S$, then $\CaA(f)[X]\subseteq S$.
For the other inclusion, observe that $X\subset\CaA(f)[X]$ and $\CaA(f)[X]\in\CaA(f)$, which implies that $X\sqcup\Delta(f)\subseteq \CaA(f)[X]$. Consequently, $\msg(S)\subset\CaA(f)[X]$, and thus $S\subseteq \CaA(f)[X]$.  To complete the proof, we show that if $Y$ is an $\CaA(f)$-set such that $\CaA(f)[Y] = S$, then necessarily  $X \subseteq Y$. Suppose that $X \not \subseteq Y$, then $x\in X \setminus Y$ exists. Since $x\in \msg(S)$ and $S\in \CaA(f)$, we have $S\setminus\{x\}\in \CaA(f)$ and $Y\subset S\setminus\{x\}$. Applying Proposition \ref{prop:CaA(f)[X]}, $S=\CaA(f)[Y]\subseteq S\setminus\{x\}$, which is false. Hence, $X$ is the unique minimal $\CaA(f)$-system of generators of $S$.
\end{proof}

As a consequence of Theorem \ref{Thrm:msgCaA(f)}, we obtain the following corollary, which in particular caracterizes the elements $S\in\CaA(f)$ with $\rank_{\CaA(f)}(S)=0$, and gives the key fact for those with $\rank_{\CaA(f)}(S)=1$.

\begin{corollary}\label{coro:rank}
Let $S\in\CaA(f)$. Then, the following conditions hold:
\begin{itemize}
\item $\rank_{\CaA(f)}(S)\leq\e(S)$.
\item $\rank_{\CaA(f)}(S) = 0$ if and only if $S = \Delta(f)$.
\item If $S \ne \Delta(f)$, then $\m(S) \in \msg_{\CaA(f)}(S)$.
\item $\rank_{\CaA(f)}(S) =1$ if and only if $\msg_{\CaA(f)}(S)=\{\m(S)\}$.
\end{itemize}
\end{corollary}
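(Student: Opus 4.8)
The plan is to prove each of the four bullet points in turn, leaning heavily on Theorem~\ref{Thrm:msgCaA(f)}, which tells us that $\msg_{\CaA(f)}(S)=\N(S)\cap\msg(S)$. This identity reduces every claim about the $\CaA(f)$-rank to a claim about the ordinary minimal generators of $S$ that happen to be small elements.

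For the first bullet, $\rank_{\CaA(f)}(S)\le\e(S)$, I would simply observe that $\msg_{\CaA(f)}(S)=\N(S)\cap\msg(S)\subseteq\msg(S)$, so taking cardinalities gives $\rank_{\CaA(f)}(S)=|\N(S)\cap\msg(S)|\le|\msg(S)|=\e(S)$. For the second bullet, I would argue both directions. If $S=\Delta(f)$, then since $\Delta(f)=\min_{\subseteq}\CaA(f)$ is the root, the empty set is an $\CaA(f)$-set generating it, so $\rank_{\CaA(f)}(\Delta(f))=0$. Conversely, if $\rank_{\CaA(f)}(S)=0$ then $\msg_{\CaA(f)}(S)=\emptyset$, so $\CaA(f)[\emptyset]=S$; but $\CaA(f)[\emptyset]$ is the smallest element of $\CaA(f)$ containing $\emptyset$, which by Proposition~\ref{prop:CaA(f)[X]} is $\Delta(f)=\min_{\subseteq}\CaA(f)$, forcing $S=\Delta(f)$.

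For the third bullet, assume $S\neq\Delta(f)$. I would show $\m(S)\in\N(S)\cap\msg(S)$. First, $\m(S)\in\msg(S)$ always holds, since the multiplicity is the $\preceq$-smallest nonzero element and hence cannot be written as a sum of two nonzero semigroup elements. Second, I need $\m(S)\in\N(S)$, i.e.\ $\m(S)\prec f=\Fb(S)$. Since $S\neq\Delta(f)$, there is some element of $S$ strictly below $f$ other than $0$ — more precisely, $S\supsetneq\Delta(f)$ means $S$ contains some $x\in\CaC$ with $0\prec x\preceq f$, and as $f\in\CaH(S)$ we have $x\prec f$; then $\m(S)\preceq x\prec f$, so $\m(S)\in\N(S)$. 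Applying Theorem~\ref{Thrm:msgCaA(f)} yields $\m(S)\in\msg_{\CaA(f)}(S)$.

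For the fourth bullet, I would use the previous one. If $\msg_{\CaA(f)}(S)=\{\m(S)\}$, then trivially $\rank_{\CaA(f)}(S)=1$. Conversely, if $\rank_{\CaA(f)}(S)=1$, then $S\neq\Delta(f)$ by the second bullet, so by the third bullet $\m(S)\in\msg_{\CaA(f)}(S)$; since this set has exactly one element, it must equal $\{\m(S)\}$. I do not anticipate a serious obstacle here, as all four statements follow fairly directly from the characterization of $\msg_{\CaA(f)}(S)$ together with Proposition~\ref{prop:CaA(f)[X]}. The only point requiring a little care is verifying $\m(S)\prec f$ in the third bullet — making precise that $S\neq\Delta(f)$ forces a nonzero element of $S$ strictly below the Frobenius element — and ensuring the second bullet correctly identifies $\CaA(f)[\emptyset]$ with the root $\Delta(f)$.
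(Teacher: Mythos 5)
Your proposal is correct and follows exactly the route the paper intends: the corollary is stated as an immediate consequence of Theorem~\ref{Thrm:msgCaA(f)} (with Proposition~\ref{prop:CaA(f)[X]} handling the identification $\CaA(f)[\emptyset]=\Delta(f)$), and your four deductions, including the careful check that $S\neq\Delta(f)$ forces $\m(S)\prec f$ via $S\supsetneq\Delta(f)$, are precisely the details the paper leaves to the reader.
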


Next, we provide a characterization of the elements $S \in \CaA(f)$ with $\rank_{\CaA(f)}(S) = 1$. Recall that $\Lambda$ is the canonical basis of $\mathbb N^p$. Note that if $\m(S)\in \Lambda$, then $\m(S)$ and $2\m(S)$ are consecutive. So, when $\m(S), 2\m(S)\in \N(S)$, we deduce that $S$ is not an $\CaA$-semigroup. Therefore, any element $S$ in $\CaA(f)$ with $\rank_{\CaA(f)}(S) =1$ and $\m(S)\in \Lambda$ is $\langle m \rangle \cup \Delta(f)$, for some $m\in \Lambda$ and $m\prec f \prec 2m$.

\begin{theorem}\label{thr:caracrank=1}

Let $S$ be a $\CaC$-semigroup and $m, f \in \CaC$ with $m\notin \Lambda$. Then, $S\in \CaA(f)$ with $\rank_{\CaA(f)}(S) =1$ if and only if
\begin{equation}\label{Sdescription}
S = \langle m \rangle \cup \Delta(f),
\end{equation}
where $m \prec f$ and $f\notin \langle m\rangle$.
\end{theorem}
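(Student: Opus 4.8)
The plan is to prove both implications by combining Theorem~\ref{Thrm:msgCaA(f)} with Corollary~\ref{coro:rank}, working out what $\rank_{\CaA(f)}(S)=1$ forces when $\m(S)\notin\Lambda$. By the last item of Corollary~\ref{coro:rank}, $\rank_{\CaA(f)}(S)=1$ is equivalent to $\msg_{\CaA(f)}(S)=\{\m(S)\}$, and by Theorem~\ref{Thrm:msgCaA(f)} this minimal system equals $\N(S)\cap\msg(S)$. So the core of the argument is to translate the condition $\N(S)\cap\msg(S)=\{\m(S)\}$ into the explicit description~\eqref{Sdescription}.

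For the forward direction, I would assume $S\in\CaA(f)$ with $\N(S)\cap\msg(S)=\{m\}$, where $m=\m(S)$. First I would show $m\prec f$: this is clear since $m\in\N(S)$ means $m$ is a small element, so $m\prec\Fb(S)=f$. Next, I would argue that every small element of $S$ other than $0$ must be a multiple of $m$. Indeed, any small element lies in $\langle\msg(S)\rangle$, and if it used any generator other than $m$ it would force that generator into $\N(S)\cap\msg(S)$ (a generator below the Frobenius element that contributes to a small element is itself small), contradicting $\N(S)\cap\msg(S)=\{m\}$. Hence $\N(S)\subseteq\langle m\rangle\cup\{0\}$, which together with $S=\N(S)\sqcup(\{x\in\CaC\mid x\succ f\})$ gives $S=\langle m\rangle\cup\Delta(f)$. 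Finally $f\notin\langle m\rangle$ holds because $f=\Fb(S)\in\CaH(S)$, so $f\notin S\supseteq\langle m\rangle$.

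For the converse, I would take $S=\langle m\rangle\cup\Delta(f)$ with $m\prec f$, $m\notin\Lambda$, and $f\notin\langle m\rangle$, and verify three things: that $S$ is a $\CaC$-semigroup with $\Fb(S)=f$, that $S$ is an $\CaA$-semigroup, and that $\rank_{\CaA(f)}(S)=1$. The Frobenius element is $f$ because $f\notin\langle m\rangle$ forces $f\in\CaH(S)$, while everything strictly above $f$ lies in $\Delta(f)\subseteq S$, so $f=\max_\preceq\CaH(S)$. For the $\CaA$-property I would use $m\notin\Lambda$: the small elements are exactly the elements of $\langle m\rangle$ below $f$, namely $0,m,2m,\dots$, and no two of these are consecutive since $km+e=(k+1)m$ would force $m=e\in\Lambda$, contradicting $m\notin\Lambda$; more generally no $km$ and $km+e$ are both small because $km+e\in\langle m\rangle$ would again force $e$ to be a multiple difference equal to a basis vector. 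Finally $\msg_{\CaA(f)}(S)=\N(S)\cap\msg(S)=\{m\}$, so by Corollary~\ref{coro:rank} the rank is $1$.

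The main obstacle I expect is the careful verification of the $\CaA$-property in the converse, specifically ruling out that some small multiple $km$ of $m$ has a consecutive small element $km+e$. This requires showing $km+e\notin\N(S)$ for all $e\in\Lambda$, i.e.\ that $km+e$ is either a gap or lies above $f$ whenever $km$ is small; the hypothesis $m\notin\Lambda$ is exactly what prevents $km+e$ from being another multiple of $m$, but one must also confirm that any $km+e$ that does happen to lie in $\langle m\rangle$ cannot be small, which is where the structure of $\langle m\rangle$ as a single-generator submonoid and the precise placement of $f$ must be used together.
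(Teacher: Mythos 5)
Your proof is correct and takes essentially the same route as the paper: both directions hinge on Corollary~\ref{coro:rank}, with the converse amounting to the observation that $\langle m\rangle\cup\Delta(f)$ is the smallest element of $\CaA(f)$ containing $m$. The only differences are that you phrase the forward direction through $\msg_{\CaA(f)}(S)=\N(S)\cap\msg(S)=\{\m(S)\}$ where the paper uses $S=\CaA(f)[\{\m(S)\}]$, and that you spell out the $\CaA$-property check (no two multiples of $m$ below $f$ can be consecutive when $m\notin\Lambda$) which the paper dismisses with ``clearly''.
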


\begin{proof}
Suppose that $S  \in \CaA(f)$ and $\rank_{\CaA(f)}(S) =1$. By Corollary \ref{coro:rank}, we have $S=\CaA(f)[\{\m(S)\}]$ and $S\ne\Delta(f)$. Therefore, $\m(S) \prec f$ and $f\notin\langle \m(S)\rangle$, which implies that $S = \langle \m(S) \rangle \cup \Delta(f)$. Conversely, consider the $\CaC$-semigroup $S$ described in \eqref{Sdescription}. Clearly, $S \in \CaA(f)$, $S \neq \Delta(f)$, and any element of $ \CaA(f)$ that contains $\{m\}$ also contains $S$. By Proposition \ref{prop:CaA(f)[X]}, we deduce that $S=\CaA(f)[\{m\}]$, and applying Corollary \ref{coro:rank}, we obtain $\rank_{\CaA(f)}(S) =1$.
\end{proof}

The $\CaA$-semigroup presented in \eqref{Sdescription} can be described in greater detail. For this purpose, we introduce some notation related to monomial orders on $\mathbb{N}^p$. Any such order $\preceq$ can be defined using $p$ distinct maps $\pi_i: \mathbb N^p \longrightarrow\mathbb N$, where each $\pi_i$ is defined via $\pi_i(x_1,\ldots ,x_p)=\sum_{j\in[p]}a_{ij}x_j$, such that $a_{ij}$ are positive real numbers for every $i,j\in[p]$, and the matrix $M=(a_{ij})$ is non-singular (for further details, see \cite{cox1997ideals, robbiano1986theory}). Some results in this work require that the set $\{x\in \CaC\mid x\preceq k\}$ is finite for any $k\in \CaC$, especially when $\CaC=\mathbb N^p$. Hence, we assume that the fixed monomial order satisfies such a property, and that the coefficients of $\pi_1(x_1,\ldots ,x_p)$ are all strictly positive (integer) numbers, that is, the entries of the first row of $M$ are all strictly positive. For example, a graded monomial order can be used.

From now on, $\lfloor \cdot \rfloor$ denotes the floor function, which rounds down to the nearest integer. Therefore, using the above terminology, the $\CaA$-semigroup given in \eqref{Sdescription} corresponds with the set
\[
S =\bigg\{\m(S), 2\m(S), \ldots, \left\lfloor \tfrac{\pi_1(f)}{\pi_1\left(\m(S)\right)} \right\rfloor\m(S) \bigg\}\cup\Delta(f),
\]
which leads to the following result for numerical semigroups. We denote by $\g(S)$ the genus of $S$, that is, the cardinality of $\CaH(S)=\CaC\setminus S$.

\begin{corollary}
If $S\in \CaA(f)$ is a numerical semigroup with $\rank_{\CaA(f)}(S)=1$, then $\g(S) = f - \left\lfloor \frac{f}{\m(S)} \right\rfloor$.
\end{corollary}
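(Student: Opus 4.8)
The plan is to compute the genus directly from the explicit description of $S$ given immediately before the statement. For a numerical semigroup we have $p = 1$, so $\CaC = \mathbb{N}$, the monomial order $\preceq$ is the usual order on $\mathbb{N}$, and $\pi_1$ is the identity map. Under these identifications the formula $S = \{\m(S), 2\m(S), \ldots, \lfloor f/\m(S)\rfloor\, \m(S)\} \cup \Delta(f)$ tells us exactly which elements below $f$ belong to $S$, and everything above $f$ belongs to $S$ by definition of $\Delta(f)$. Since $\g(S) = |\CaH(S)| = |\{x \in \mathbb{N} \mid x \le f,\ x \notin S\}|$, the whole computation reduces to counting the small non-gaps and subtracting.

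First I would observe that the gaps of $S$ are precisely the integers in $\{1, 2, \ldots, f\}$ that are \emph{not} positive multiples of $\m(S)$ lying at or below $f$; indeed $0 \in S$ is never a gap, every $x > f$ lies in $\Delta(f) \subseteq S$, and among $1 \le x \le f$ the members of $S$ are exactly the multiples $\m(S), 2\m(S), \ldots, \lfloor f/\m(S)\rfloor\,\m(S)$. Next I would count: there are $f$ integers in $\{1, \ldots, f\}$, and among them the number of positive multiples of $\m(S)$ not exceeding $f$ is exactly $\lfloor f/\m(S)\rfloor$. Subtracting gives
\[
\g(S) = f - \left\lfloor \frac{f}{\m(S)} \right\rfloor,
\]
which is the desired identity.

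The one point that needs a small justification, rather than a main obstacle, is that all the integers $\m(S), 2\m(S), \ldots, \lfloor f/\m(S)\rfloor\,\m(S)$ are genuinely distinct and genuinely $\le f$, so that the count of small non-gaps is exactly $\lfloor f/\m(S)\rfloor$ with no over- or under-counting. Distinctness is clear since $\m(S) \ge 1$, and each listed multiple is at most $\lfloor f/\m(S)\rfloor\,\m(S) \le f$ by the definition of the floor function; the element $f$ itself is the Frobenius number and hence a gap, consistent with $f$ not being a multiple of $\m(S)$ (which follows from $f \notin \langle m\rangle$ in Theorem~\ref{thr:caracrank=1}). I expect no serious difficulty here: the result is a direct specialization of the preceding explicit description to the case $p = 1$, and the argument is essentially a one-line counting computation once the structure of $\CaH(S)$ is made explicit.
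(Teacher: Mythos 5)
Your proof is correct and follows essentially the same route as the paper, which presents the corollary as an immediate consequence of the explicit description $S=\bigl\{\m(S),2\m(S),\ldots,\lfloor f/\m(S)\rfloor\,\m(S)\bigr\}\cup\Delta(f)$ specialized to $p=1$; you simply make the implicit counting of gaps in $\{1,\ldots,f\}$ explicit. No issues.
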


\begin{example}
Let  $S=\CaA(14)[\{5\}]=\{0,5, 10, 15, \longrightarrow \}$,  where the symbol $\longrightarrow$ indicates that every integer greater than $15$ belongs to the set. Hence, $\g(S)=14- \left\lfloor \tfrac{14}{5} \right\rfloor=12$.
\end{example}

Consider now the case where $S \in \CaA(f)$ and $\rank_{\CaA(f)}(S) = 2$. We structure our discussion analogously to the case $\rank_{\CaA(f)}(S) = 1$, and provide an explicit description of the $\CaA$-semigroups with $\CaA(f)$-rank equal to two. Additionally, for those that are also numerical semigroups, we present a pseudo-formula for their genus.

Recall that for any $\CaC$-semigroup $S$, the ratio of $S$ is given by $\r(S)=\min_\preceq\big(\msg(S)\setminus\{\m(S)\}\big)$. The next result is a direct consequence of Theorem \ref{Thrm:msgCaA(f)}.

\begin{corollary}\label{coro:rank2}
Let $S\in\CaA(f)$. Then, the following conditions are equivalent:
\begin{itemize}
\item  $\rank_{\CaA(f)}(S)=2$.
\item $\{\m(S), \r(S)\}$ is an $\CaA(f)$-set, and $S=\CaA(f)\big[\{\m(S), \r(S)\}\big]$.
\item $\r(S)\prec f$ and $S=\langle\m(S),\r(S)\rangle \cup \Delta(f)$, where if there exist two consecutive elements of $\langle\m(S),\r(S)\rangle$, then at least one of them is greater than $f$.
\end{itemize}
\end{corollary}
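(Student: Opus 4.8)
The plan is to prove the three-way equivalence in Corollary~\ref{coro:rank2} by establishing a cycle of implications, relying on Theorem~\ref{Thrm:msgCaA(f)} as the main engine. The key fact from Theorem~\ref{Thrm:msgCaA(f)} is that $\msg_{\CaA(f)}(S)=\N(S)\cap\msg(S)$, and from Corollary~\ref{coro:rank} that when $S\neq\Delta(f)$ we always have $\m(S)\in\msg_{\CaA(f)}(S)$. So $\rank_{\CaA(f)}(S)=2$ means precisely that the unique minimal $\CaA(f)$-system of generators consists of $\m(S)$ together with exactly one more element. The natural candidate for that second element is $\r(S)=\min_\preceq\big(\msg(S)\setminus\{\m(S)\}\big)$, so the crux of the first implication is to verify that $\r(S)$ is indeed the second generator, i.e. that $\r(S)\in\N(S)$.

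First I would show that $\rank_{\CaA(f)}(S)=2$ implies the middle condition. Writing $\msg_{\CaA(f)}(S)=\{\m(S),w\}$ for the unique second generator $w$, I would argue that $w=\r(S)$. Since $\msg_{\CaA(f)}(S)=\N(S)\cap\msg(S)\subseteq\msg(S)$, both $\m(S)$ and $w$ lie in $\msg(S)$; as $\r(S)$ is the $\preceq$-smallest element of $\msg(S)\setminus\{\m(S)\}$, and $\r(S)\preceq w\prec f$ forces $\r(S)\in\N(S)\cap\msg(S)=\{\m(S),w\}$, we get $\r(S)=w$. Hence $\{\m(S),\r(S)\}$ is an $\CaA(f)$-set and equals $\msg_{\CaA(f)}(S)$, so by definition $S=\CaA(f)[\{\m(S),\r(S)\}]$.

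Next I would prove the middle condition implies the third. Given $S=\CaA(f)[\{\m(S),\r(S)\}]$ with $\r(S)\prec f$, the semigroup $\langle\m(S),\r(S)\rangle\cup\Delta(f)$ is the smallest $\CaC$-semigroup containing these generators along with $\Delta(f)$; using Proposition~\ref{prop:CaA(f)[X]} and the fact that $\CaA(f)[X]$ is the smallest element of $\CaA(f)$ containing $X$, one identifies $S$ with this set. The condition that any two consecutive elements of $\langle\m(S),\r(S)\rangle$ cannot both be small (at least one must exceed $f$) is exactly the requirement that this set actually lies in $\CaA(f)$, i.e. has no consecutive small elements; this is forced because $S$ itself is an $\CaA$-semigroup. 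For the remaining implication, third to first, I would take $S=\langle\m(S),\r(S)\rangle\cup\Delta(f)$ satisfying the stated constraints, verify via the consecutive-elements condition that $S\in\CaA(f)$, and then compute $\msg_{\CaA(f)}(S)=\N(S)\cap\msg(S)$ using Theorem~\ref{Thrm:msgCaA(f)}. Since $\r(S)\prec f$ and $f\notin\langle\m(S),\r(S)\rangle$ (so that $S\neq\Delta(f)$ is genuinely two-generated over the half-line), both generators survive in $\N(S)$ and no other minimal generator of $S$ is small, yielding $\rank_{\CaA(f)}(S)=2$.

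The main obstacle I anticipate is the careful bookkeeping around the third condition's clause about consecutive elements: I must confirm that the $\CaA$-semigroup property of $S$ translates exactly into the stated constraint on $\langle\m(S),\r(S)\rangle$, and conversely that the constraint guarantees membership in $\CaA(f)$. In particular one must check that the small elements of $\langle\m(S),\r(S)\rangle\cup\Delta(f)$ are precisely the small elements of $\langle\m(S),\r(S)\rangle$ that are $\prec f$, so that the no-consecutive-small-elements condition is equivalent to the phrasing given. The rest of the argument is a routine application of Theorem~\ref{Thrm:msgCaA(f)} and Proposition~\ref{prop:CaA(f)[X]}, so I would keep those steps brief and concentrate the writing on this equivalence.
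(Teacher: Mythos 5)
Your proposal is correct and follows essentially the same route as the paper, which states this corollary as a direct consequence of Theorem~\ref{Thrm:msgCaA(f)} (the paper gives no further detail); your cycle of implications simply fills in the bookkeeping the authors leave implicit, in particular the identification of the second minimal $\CaA(f)$-generator with $\r(S)$ and the verification that $\langle\m(S),\r(S)\rangle\cup\Delta(f)$ already lies in $\CaA(f)$ because it sits inside $S$. No substantive deviation or gap.
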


To characterize the elements $S \in \CaA(f)$ with $\rank_{\CaA(f)}(S) = 2$, we briefly explain the conditions under which $\langle m,r\rangle$ is a $\CaC$-semigroup for any $m,r\in \CaC$. The conditions above are outlined as follows.
\begin{itemize}
\item For $p\geq 3$, $\langle m,r\rangle$ is not a $\CaC$-semigroup.
\item For $p=2$, $S=\langle m,r\rangle$ is a $\CaC$-semigroup if and only if $S=\CaC$. Since a $\CaC$-semigroup $S=\langle m,r\rangle \subseteq\CaC\subseteq\mathbb N^2$ has embedding dimension two if and only if the set $\mathbb N^2 \cap \{\lambda_1 a_1+\lambda_2 a_2\mid 0\leq \lambda_i\leq 1,\mbox{ and } a_i=\min_{\leq_{\mathbb N^2}} (\tau_i\cap \CaC) \}=\{0,m,r,m+r\}$ (see \cite{diaz2022characterizing}).
\item For $p=1$, $\langle m,r\rangle$ is a numerical semigroup if and only if $m$ and $r$ are coprime (see \cite[Lemma 2.1]{libroRosales}).
\end{itemize}

We state the following theorem, which provides a characterization of the elements $S \in \CaA(f)$ with $\rank_{\CaA(f)}(S) = 2$ in the case where $\langle m, r \rangle$ is not a $\CaC$-semigroup. From now on, we assume that for any $m, r, f \in \CaC$, if two consecutive elements of $\langle m,r\rangle$ exist, then one of them is greater than $f$.

\begin{theorem}\label{thr:rank2}
Let $S$ be a $\CaC$-semigroup and let $m, r, f \in \CaC$ such that $\langle m, r \rangle$ is not a $\CaC$-semigroup, $m \prec r \prec f$, $r \notin \langle m \rangle$, and $f \notin \langle m, r \rangle$. Then,
\[
S = \langle m, r \rangle \cup \Delta(f) \in \CaA(f) \text{ and } \rank_{\CaA(f)}(S) = 2.
\]
Moreover, every $\CaA$-semigroup with $\CaA(f)$-rank equal to two, such that its multiplicity and ratio do not generate a $\CaC$-semigroup, is of this form.
\end{theorem}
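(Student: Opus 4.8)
The plan is to prove Theorem~\ref{thr:rank2} by exhibiting $S = \langle m, r\rangle \cup \Delta(f)$ as the smallest element of $\CaA(f)$ containing the $\CaA(f)$-set $\{m,r\}$, and then invoking the equivalence from Corollary~\ref{coro:rank2}. First I would verify that $S$ is a $\CaC$-semigroup: closure under addition follows from the fact that $\langle m,r\rangle$ is closed, that $\Delta(f)$ is closed (it is $\min_{\subseteq}\CaA(f)$), and that adding any element of $S$ to an element of $\Delta(f)$ lands in $\Delta(f)$ (since anything $\succ f$ stays $\succ f$ under addition of a nonzero element, as $0\preceq x$ forces $x \succeq 0$ and the order is compatible with addition). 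The complement $\CaH(S)$ is finite because $\CaH(\Delta(f)) = \{x\in\CaC\mid 0\prec x\preceq f,\ x\ne 0\}$ is finite under our standing assumption that $\{x\in\CaC\mid x\preceq k\}$ is finite, and $\CaH(S)\subseteq\CaH(\Delta(f))$. The hypothesis $f\notin\langle m,r\rangle$ guarantees $f\in\CaH(S)$, and since $S\supseteq\Delta(f)=\{0\}\cup\{x\succ f\}$, nothing $\succ f$ is missing; hence $\Fb(S)=\max_\preceq\CaH(S)=f$, so $S$ has Frobenius element $f$.

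Next I would check that $S$ is an $\CaA$-semigroup, i.e.\ that $\N(S)$ contains no two consecutive elements. The small elements $\N(S)$ are exactly the elements of $\langle m,r\rangle$ that are $\prec f$ (together with $0$), since everything in $\Delta(f)\setminus\{0\}$ is $\succ f$ and therefore not small. The potential failures of the $\CaA$-property come only from pairs of consecutive elements inside $\langle m,r\rangle$. Here the standing assumption introduced just before the theorem does the work: if two consecutive elements of $\langle m,r\rangle$ exist, at least one of them is $\succ f$, hence not small, so no pair of consecutive small elements can occur. This is the step I expect to rely most heavily on the hypotheses, but with that assumption in force it is essentially immediate.

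It then remains to show $\rank_{\CaA(f)}(S) = 2$, which I would obtain through Corollary~\ref{coro:rank2}. The third equivalent condition there is precisely ``$\r(S)\prec f$ and $S = \langle\m(S),\r(S)\rangle\cup\Delta(f)$, with the consecutivity restriction,'' so I must identify $\m(S) = m$ and $\r(S) = r$. Because $m\prec r$ and $m$ is the $\preceq$-minimal nonzero generator, $\m(S) = m$; and since $r\notin\langle m\rangle$ with $r\prec f$ (so $r\in\N(S)$), $r$ is the $\preceq$-minimal element of $S$ outside $\langle m\rangle$, giving $\r(S) = r$. The condition $r\notin\langle m\rangle$ is what prevents the rank from collapsing to $1$ (it rules out $S = \langle m\rangle\cup\Delta(f)$, the form from Theorem~\ref{thr:caracrank=1}), so that both $m$ and $r$ are forced into $\msg_{\CaA(f)}(S) = \N(S)\cap\msg(S)$ by Theorem~\ref{Thrm:msgCaA(f)}. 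Applying Corollary~\ref{coro:rank2} yields $\rank_{\CaA(f)}(S) = 2$.

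For the final ``moreover'' clause I would argue the converse directly from Corollary~\ref{coro:rank2}. Given any $S'\in\CaA(f)$ with $\rank_{\CaA(f)}(S')=2$ whose multiplicity $m'=\m(S')$ and ratio $r'=\r(S')$ do not generate a $\CaC$-semigroup, the corollary gives $S' = \langle m',r'\rangle\cup\Delta(f)$ with $r'\prec f$; the defining minimality of $\m$ and $\r$ forces $m'\prec r'$ and $r'\notin\langle m'\rangle$, while $f\notin\langle m',r'\rangle$ follows from $f\in\CaH(S')$, so $S'$ has exactly the stated form. The main obstacle, modest as it is, lies in cleanly disentangling which gaps survive: one must be careful that passing to $\langle m,r\rangle\cup\Delta(f)$ does not accidentally introduce an element of $\langle m,r\rangle$ lying strictly between the generators and $f$ that violates the $\CaA$-condition, which is exactly why the standing consecutivity assumption is invoked and why I would foreground it.
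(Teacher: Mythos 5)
Your proposal is correct and follows essentially the same route as the paper: establish $S\in\CaA(f)$ via the standing assumption on consecutive elements of $\langle m,r\rangle$, identify $\msg_{\CaA(f)}(S)=\{m,r\}$ through Theorem~\ref{Thrm:msgCaA(f)} (equivalently Corollary~\ref{coro:rank2}) to get rank two, and obtain the converse from Corollary~\ref{coro:rank2}. The only difference is that you spell out the routine verifications (that $S$ is a $\CaC$-semigroup with $\Fb(S)=f$, $\m(S)=m$, $\r(S)=r$) which the paper dismisses as ``clearly'' and ``trivially.''
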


\begin{proof}
Clearly, $S\in \CaA(f)$. By Theorem \ref{Thrm:msgCaA(f)}, it follows that $\msg_{\CaA(f)}(S)=\{m,r\}$. So, $\rank_{\CaA(f)}(S)=2$. Suppose now  that  $T\in \CaA(f)$ with $\rank_{\CaA(f)}(T) =2$ such that $\langle\m(T), \r(T)\rangle$ is not a $\CaC$-semigroup. By Corollary \ref{coro:rank2}, we have $T=\langle \m(T), \r(T) \rangle \cup \Delta(f)$. Trivially,  $\m(T)\prec \r(T) \prec f$, $ \r(T)\notin \langle \m(T) \rangle$ and, $f \notin \langle \m(T),  \r(T) \rangle$.
\end{proof}

Theorem \ref{thr:rank2} does not analyze all numerical semigroups with $\CaA(f)$-rank equal to two. Specifically, the case where $m$ and $r$ are coprime (i.e., when $\langle m, r \rangle$ is a numerical semigroup) has not been considered. Notice that any numerical semigroup $S$ with multiplicity equal to two is minimally generated by $\{2,\Fb(S)+2\}$. Thus, $S$ is an $\CaA$-semigroup with $\rank_{\CaA(\Fb(S))}(S)=1$. Moreover, the multiplicity of any numerical $\CaA$-semigroup with $\CaA(f)$-rank equal to two is greater than or equal to three. In general, affine semigroups require more consideration due to the greater number of cases that distinguish them from numerical semigroups. However, as we have seen, the situation is reversed. This is why we rely on the Apéry set to address all the possibilities for numerical semigroups.

The following two propositions are formulated specifically for them. We recall a well-known characterization of the Apéry set of a numerical semigroup.

\begin{proposition}\cite[Lemma 2.4]{libroRosales}.\label{lem:Ap_congruencias}
Let $S$ be a numerical semigroup and $b\in S\setminus\{0\}$. Then,
\[
\Ap(S,b)=\{0=w(0), w(1), \ldots, w(b-1)\},
\]
where $w(i)$ is the least element of $S$ such that $w(i)\equiv i \mod b$, for all $i\in\{0\}\cup [b-1]$.
\end{proposition}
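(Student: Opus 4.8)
The statement to prove is Proposition~\ref{lem:Ap_congruencias}, the classical characterization of the Apéry set of a numerical semigroup as the set of least residue representatives. Let me sketch how I would prove it.

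\medskip

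The plan is to show that the stated set equals $\Ap(S,b)$ by a direct double inclusion, relying on the definition $\Ap(S,b)=\{a\in S\mid a-b\in \CaH(S)\}$. First I would fix $b\in S\setminus\{0\}$ and, for each residue class $i\in\{0\}\cup[b-1]$, define $w(i)$ to be the least element of $S$ congruent to $i$ modulo $b$; this is well defined because $S$ is cofinite in $\mathbb N$, so every residue class meets $S$ in a nonempty set which, being a subset of $\mathbb N$, has a minimum. The key structural observation is the following equivalence: for $a\in S$, we have $a\in \Ap(S,b)$ if and only if $a-b\notin S$, which happens if and only if $a$ is the \emph{least} element of $S$ in its congruence class modulo $b$. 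This last equivalence is the heart of the argument and I would prove it carefully.

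\medskip

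For that central equivalence I would argue both directions. If $a$ is the least element of $S$ in its class modulo $b$, then $a-b$ is a smaller integer in the same class; since $a$ was the minimum of $S\cap(a+b\mathbb Z)$, we get $a-b\notin S$, hence $a-b\in\CaH(S)$ and $a\in\Ap(S,b)$. Conversely, if $a\in\Ap(S,b)$ so that $a-b\notin S$, suppose toward a contradiction that $a$ is not minimal in its class; then there is $a'\in S$ with $a'\equiv a\pmod b$ and $a'<a$, so $a-a'$ is a positive multiple of $b$, say $a-a'=kb$ with $k\geq 1$. Then $a-b=a'+(k-1)b$ is a sum of the element $a'\in S$ and a nonnegative multiple of $b\in S$, hence lies in $S$ (here I use that $S$ is closed under addition and $0\in S$), contradicting $a-b\notin S$. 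This establishes the equivalence.

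\medskip

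To finish, I would observe that the map sending a residue class to its least representative in $S$ is a bijection between $\{0,\ldots,b-1\}$ and the set of class-minima of $S$, and by the equivalence just proved this latter set is exactly $\Ap(S,b)$. The class of $0$ contributes $w(0)=0$, the minimum element of $S$. Thus $\Ap(S,b)=\{w(0),w(1),\ldots,w(b-1)\}$ with the $w(i)$ distinct (they lie in distinct residue classes), giving cardinality $b$ and completing the proof.

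\medskip

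The main obstacle, or rather the only place demanding real care, is the converse direction of the central equivalence: one must correctly use closure of $S$ under addition together with $b\in S$ to promote ``$a$ not minimal in its class'' into ``$a-b\in S$''. Everything else is bookkeeping about residue classes and the cofiniteness of $S$ guaranteeing existence of the minima.
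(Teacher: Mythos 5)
Your proof is correct: the central equivalence (for $a\in S$, $a-b\notin S$ if and only if $a$ is the least element of $S$ in its residue class modulo $b$) is exactly the standard argument, and both directions are handled properly, including the class of $0$. The paper does not prove this statement itself but cites it from \cite[Lemma 2.4]{libroRosales}, and your argument coincides with the classical proof given there, so there is nothing to compare beyond confirming it is sound.
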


\begin{proposition}\label{prop:carcApN}
Let $S$ be a numerical semigroup, $b\in S\setminus\{0\}$, $\Ap(S,b)=\{0=w(0), w(1), \ldots, w(b-1)\}$ and $X=\{i\in [b-1]\mid w(i)< \Fb(S)\}$. Then, $S\in\CaA(f)$ if and only if $X\cup\{0,b\}$ does not contain consecutive natural numbers.
\end{proposition}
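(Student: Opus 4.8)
The plan is to translate the defining property of a numerical $\CaA$-semigroup into a statement about residues modulo $b$, and then prove the (equivalent, contrapositive) claim ``$S\notin\CaA(f)$ if and only if $X\cup\{0,b\}$ contains two consecutive integers.'' For $p=1$ we have $\Lambda=\{1\}$, so $S\in\CaA(f)$ precisely when $\N(S)=\{s\in S\mid s<f\}$ contains no two consecutive integers. The bridge to the Apéry data is the observation that the elements of $S$ in the residue class $i$ are exactly $w(i),w(i)+b,w(i)+2b,\dots$; hence class $i$ contains a small element if and only if $w(i)<f$, and since $w(0)=0$ this is exactly the condition $i\in X\cup\{0\}$ (reading residues modulo $b$). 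The element $b$ is thrown into the set to represent the ``next copy'' of the residue class $0$, i.e.\ the wrap-around from class $b-1$ to class $0$.

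First I would prove the forward direction. Suppose $S\notin\CaA(f)$, so there exist $n,n+1\in\N(S)$. Put $i\equiv n\pmod b$. From $w(i)\le n<f$ we get $i\in X\cup\{0\}$. If $i\le b-2$, then $n+1\equiv i+1\pmod b$ and $w(i+1)\le n+1<f$ gives $i+1\in X$, so $i$ and $i+1$ are consecutive integers of $X\cup\{0,b\}$. If instead $i=b-1$, then $b-1\in X$ while $b\in X\cup\{0,b\}$, so the consecutive pair $b-1,b$ lies in $X\cup\{0,b\}$. This already yields the implication $\bigl(X\cup\{0,b\}\text{ has no consecutive integers}\bigr)\Rightarrow S\in\CaA(f)$.

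For the converse, assume $j,j+1\in X\cup\{0,b\}$ with $0\le j\le b-1$, and produce two consecutive small elements. In the case $j+1\le b-1$ we have $w(j),w(j+1)<f$ with $w(j)\neq w(j+1)$ (distinct residues). If $w(j)<w(j+1)$, then $w(j+1)-1\ge w(j)$ lies in class $j$, hence in $S$, and $(w(j+1)-1,\,w(j+1))$ are consecutive elements of $S$, both $<f$. If $w(j)>w(j+1)$, then $w(j)+1$ lies in class $j+1$ and is $\ge w(j+1)$, hence in $S$; the pair is $(w(j),\,w(j)+1)$, and here I must rule out $w(j)+1=f$: that would force $f\equiv j+1\pmod b$ with $f\ge w(j+1)$, whence $f\in S$, a contradiction, so $w(j)+1<f$. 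In either subcase $S\notin\CaA(f)$. The remaining wrap-around case $j=b-1$, $j+1=b$ has $b-1\in X$, i.e.\ $w(b-1)<f$; let $kb$ be the least positive multiple of $b$ exceeding $w(b-1)$, so $kb-1\ge w(b-1)$ lies in class $b-1$ and $(kb-1,\,kb)$ are consecutive elements of $S$. The point is to verify $kb<f$: writing $L=\lfloor f/b\rfloor$ (so $Lb<f<(L+1)b$, as $f\notin S$ is not a multiple of $b$), if one had $kb\ge f$ then $w(b-1)$ would lie strictly between $Lb$ and $f$, forcing $w(b-1)=(L+1)b-1$ and hence $(L+1)b-1<f<(L+1)b$, which is impossible for integers; thus $kb\le Lb<f$ and both $kb-1,kb$ are small.

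The main obstacle is the converse, and specifically the two verifications that the consecutive pair I construct genuinely stays below $f$: the ``$w(j)+1\neq f$'' argument (using that $f\notin S$ cannot be congruent to a residue class already represented below $f$) and the wrap-around positioning argument showing the least multiple of $b$ above $w(b-1)$ does not jump past $f$. Everything else is routine bookkeeping with the Apéry description from Proposition \ref{lem:Ap_congruencias}.
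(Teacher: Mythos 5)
Your proof is correct and follows essentially the same route as the paper: both translate the $\CaA$-condition into the residue-class description of $\Ap(S,b)$ from Proposition~\ref{lem:Ap_congruencias}, with the extra element $b$ encoding the wrap-around from class $b-1$ back to class $0$. You are in fact more careful than the printed proof, which in the forward direction works only with $b=f+1$ and in the converse omits the verifications (which you carry out) that the consecutive elements produced both lie strictly below $f$.
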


\begin{proof}
Suppose that $S$ is a numerical semigroup such that $S\notin \CaA(f)$. Therefore, there exists $s, s+1\in \N(S) \cap \Ap(S, f+1)$. By Proposition \ref{lem:Ap_congruencias}, we have $s=w(i)$ and $s+1=w(j)$ for some $i,j\in [f]\cup\{0\}$, which implies that $i,f\in X\cup\{0,f\}$. Since $w(j)= w(i)+1\equiv w(i+1)\mod (f+1)$, we conclude that $j=i+1$.
Conversely, assume that there exist $i,i+1\in X\cup\{0,b\}$, and that $w(b)=w(0)$. We have $w(i)+1 \equiv w(i+1)$ and $w(i+1)-1\equiv w(i) \mod b$. If $w(i+1)< w(i)$, then $w(i)+1\in S$. Otherwise, $w(i+1)-1\in S$. Hence, $S\notin \CaA(f)$.
\end{proof}

The following theorem provides an analogous result to Theorem \ref{thr:rank2} for those numerical semigroups whose multiplicity and ratio are coprime.

\begin{theorem}\label{thr:rank2numerical}
Let $m, r, f \in \mathbb N$ with $3\le m<  r < f$, and such that $\langle m, r \rangle$ is a numerical semigroup, $f \notin \langle m, r \rangle$, and $X=\left\{\lambda r \mod m \mid \lambda\in \big[ \lfloor\tfrac{f-1}{r}\rfloor \big]\right\}\cup\{0,m\}$. If $X$ does not contain consecutive natural numbers, then
\[
S = \langle m, r \rangle \cup \{f+1,f+2, \longrightarrow\} \in \CaA(f) \text{ with } \rank_{\CaA(f)}(S) = 2.
\]
Moreover, every numerical $\CaA$-semigroup with $\CaA(f)$-rank equal to two, such that its multiplicity and ratio generate a numerical semigroup, is of this form.
\end{theorem}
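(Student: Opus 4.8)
The plan is to prove this in two directions, mirroring the structure of Theorem \ref{thr:rank2} but now leaning on the Apéry-set characterization of Proposition \ref{prop:carcApN} rather than on the direct consecutive-elements test, since here $\langle m,r\rangle$ \emph{is} a numerical semigroup and so we cannot argue that adding $\Delta(f)$ is forced by failure of closure. First I would establish that the set $S=\langle m,r\rangle\cup\{f+1,f+2,\longrightarrow\}$ is genuinely a numerical semigroup with the stated Frobenius number and is an $\CaA$-semigroup. That it is a numerical semigroup with $\Fb(S)=f$ follows because $f\notin\langle m,r\rangle$ while everything above $f$ has been thrown in; closure under addition holds since $\langle m,r\rangle$ is closed and any sum involving an element exceeding $f$ lands above $f$. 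The crux of this direction is verifying $S\in\CaA(f)$, and here I would invoke Proposition \ref{prop:carcApN} with $b=m=\m(S)$: I must show that the set $X$ defined in the statement really does coincide (up to the small-element cutoff) with the index set $\{i\in[m-1]\mid w(i)<\Fb(S)\}\cup\{0,m\}$ from that proposition, so that the hypothesis ``$X$ contains no consecutive naturals'' translates directly into ``$S$ has no consecutive small elements.''

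The key technical step, which I expect to be the main obstacle, is the identification of the two descriptions of $X$. In Proposition \ref{prop:carcApN} the relevant residues are the $i\in[m-1]$ whose Apéry witness $w(i)$ is a \emph{small} element of $S$; in the theorem's statement the set is $\{\lambda r \bmod m \mid \lambda\in[\lfloor\frac{f-1}{r}\rfloor]\}\cup\{0,m\}$. I would argue that because $m$ and $r$ are coprime, the elements $\{0,r,2r,\ldots\}$ sweep out all residue classes modulo $m$, and for each class the least representative lying in $S$ that is also less than $f$ is exactly a multiple $\lambda r$ with $\lambda r<f$, i.e. $\lambda\le\lfloor\frac{f-1}{r}\rfloor$. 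The care needed is to confirm that for each such $\lambda$ the value $\lambda r$ is indeed $w(\lambda r\bmod m)$ (the \emph{minimal} element of its class), and that the small multiples $\lambda r$ account for precisely the residues $i$ with $w(i)<f$; multiples $\lambda r\ge f+1$ correspond either to classes already represented by a smaller multiple or to classes whose witness exceeds $f$ and hence drops out of the small-element count. Matching these two sets exactly — paying attention to the endpoints $0$ and $m$ and to whether $\lfloor\frac{f-1}{r}\rfloor$ is the right cutoff — is the delicate bookkeeping.

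Once the identification is in place, the forward implication is immediate: if $X$ has no consecutive naturals, Proposition \ref{prop:carcApN} yields $S\in\CaA(f)$, and since $\{m,r\}=\msg(S)\cap\N(S)$ (as $m,r\prec f$ and both are minimal generators lying below the Frobenius number), Theorem \ref{Thrm:msgCaA(f)} gives $\msg_{\CaA(f)}(S)=\{m,r\}$, whence $\rank_{\CaA(f)}(S)=2$. For the converse I would take an arbitrary numerical $\CaA$-semigroup $T$ with $\rank_{\CaA(f)}(T)=2$ whose multiplicity and ratio are coprime, set $m=\m(T)$ and $r=\r(T)$, and apply Corollary \ref{coro:rank2} to get $T=\langle m,r\rangle\cup\Delta(f)=\langle m,r\rangle\cup\{f+1,f+2,\longrightarrow\}$. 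The constraints $3\le m<r<f$ follow from the discussion preceding Proposition \ref{lem:Ap_congruencias} (multiplicity at least three for a coprime rank-two case) together with $r\prec f$ and $r\notin\langle m\rangle$; and $f\notin\langle m,r\rangle$ holds because $f=\Fb(T)$ is a gap. Finally, since $T\in\CaA(f)$, running the forward analysis in reverse forces $X$ to contain no consecutive naturals, so $T$ has exactly the claimed form. The only subtlety in the converse is ensuring the coprimality hypothesis is exactly what makes $\langle m,r\rangle$ a numerical semigroup, which the cited \cite[Lemma 2.1]{libroRosales} supplies.
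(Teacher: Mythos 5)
Your proposal is correct and follows essentially the same route as the paper: both verify the forward direction by computing $\Ap(S,\m(S))$ below $f$ (the paper cites \cite[Example 2.22]{libroRosales} for $\Ap(\langle m,r\rangle,m)=\{\lambda r\mid \lambda\in\{0\}\cup[m-1]\}$, which is exactly the coprimality bookkeeping you describe), then apply Proposition \ref{prop:carcApN} and Theorem \ref{Thrm:msgCaA(f)}, and both obtain the converse from Corollary \ref{coro:rank2}. The ``delicate identification'' you flag is precisely the step the paper discharges by that citation, so no gap remains.
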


\begin{proof}
Let $S= \langle m, r \rangle \cup \{f+1,f+2, \longrightarrow\}$. Since $\langle m, r \rangle$ is a numerical semigroup, it follows that $\m(S)=m$ and $\r(S)=r$. By \cite[Example 2.22]{libroRosales} we obtain that
\begin{align*}
\{ x \in \Ap(S, m) \mid x < f \} &= \{ x \in \Ap\big( \langle m, r \rangle, m \big) \mid x < f \} \\
&= \left\{ \lambda r \mid \lambda \in \{0\} \cup [ \lfloor\tfrac{f-1}{r}\rfloor ] \right\}.
\end{align*}
Therefore, $X\cup \{0, m\}=\{i\in [m-1]\mid w(i)< f\}\cup\{0,m\}$. By hypothesis, $X\cup\{0,m\}$ does not contain consecutive natural numbers. Applying Proposition \ref{prop:carcApN}, we deduce that $S\in \CaA(f)$. Finally, by Theorem \ref{Thrm:msgCaA(f)}, we obtain that  $\msg_{\CaA(f)}(S)=\{m,r\}$, and thus $\rank_{\CaA(f)}(S)=2$. Suppose now that $T\in \CaA(f)$ with $\rank_{\CaA(f)}(T) =2$, and that $\langle\m(T), \r(T)\rangle$ is a numerical semigroup. By Corollary \ref{coro:rank2}, we have $T=\langle \m(T), \r(T) \rangle \cup \{f+1, \longrightarrow\}$. Trivially,  $\m(T)< \r(T) < f$, $ \r(T)\notin \langle \m(T) \rangle$ and, $f \notin \langle \m(T),  \r(T) \rangle$.
\end{proof}

Below, we present two examples to illustrate Theorem \ref{thr:rank2numerical} and emphasize the importance of the structure of the set $X$.

\begin{example}\label{ex:NScoprime}
Let $m=10$, $r=13$ and $f=27$. Observe that $m,r$ and $f$ satisfy the conditions of Theorem \ref{thr:rank2numerical}. In this case  $\lfloor\tfrac{26}{13}\rfloor= 2$ and $X=\{13 \mod10, 26\mod 10\}\cup \{0, 10\}=\{0,3,6,10\}$, which does not contain consecutive natural numbers. Since the conditions given in Theorem \ref{thr:rank2numerical} hold, we deduce that $S=\langle 10, 13 \rangle \cup \{ 28, \rightarrow \} \in \CaA(27)$ and $\rank_{\CaA(27)}(S)=2$.
\end{example}

\begin{example}\label{ex:NSnon-coprime}
Let $m=7$, $r=11$ and $f=27$. Again, $m,r$ and $f$ verify the conditions of Theorem \ref{thr:rank2numerical}. We obtain that $\lfloor\tfrac{26}{11}\rfloor= 2$ and $X=\{11 \mod7, 22\mod 7\}\cup \{0, 7\}=\{0, 1, 2,7\}$, which  contains consecutive natural numbers. Applying Theorem \ref{thr:rank2numerical}, we conclude that there exist no numerical semigroup $S\in \CaA(27)$ with $\rank_{\CaA(27)}(S)=2$ satisfying $\m(S)=7$ and $\r(S)=11$.
\end{example}

At the end of the section, we provide a pseudo-formula for computing the genus of any numerical semigroup with $A(f)$-rank equal to two. It is a classical result that the Apéry set of a numerical semigroup, with respect to a non-zero element, allows us to compute its genus, as stated below.

\begin{proposition}\cite[Proposition 2.12]{libroRosales}.\label{prop:formulaGENUS}
Let $S$ be a numerical semigroup and $b\in S\setminus\{0\}$. Then, $\g(S)=\frac{1}{b}(\sum_{w\in \Ap(S,b)} w)-\frac{b-1}{2}$.
\end{proposition}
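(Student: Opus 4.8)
The plan is to count the gaps of $S$ directly by partitioning the non-negative integers according to their residue class modulo $b$, and then to express the number of gaps in each class in terms of the corresponding element of $\Ap(S,b)$. The whole argument rests on the fact that, since $b$ itself lies in $S$, each residue class intersects $S$ in an ``upward-closed'' arithmetic progression.

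First I would invoke Proposition \ref{lem:Ap_congruencias} to write $\Ap(S,b)=\{0=w(0),w(1),\ldots,w(b-1)\}$, where $w(i)$ is the least element of $S$ congruent to $i$ modulo $b$. The key structural observation is that an integer $n\equiv i \pmod b$ belongs to $S$ if and only if $n\ge w(i)$: indeed, $w(i)\in S$ together with $b\in S$ forces $w(i)+b, w(i)+2b,\ldots\in S$, while minimality of $w(i)$ rules out any smaller element of $S$ in the class $i$. Consequently, the gaps of $S$ in residue class $i$ are exactly $i, i+b, \ldots, w(i)-b$. Writing $w(i)=i+k_i b$ with $k_i\in\mathbb N$ (legitimate since $w(i)\equiv i \pmod b$), these gaps are $i+0\cdot b,\, i+1\cdot b,\ldots, i+(k_i-1)b$, so there are precisely $k_i=\frac{w(i)-i}{b}$ of them.

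Summing over all residue classes and using $\sum_{i=0}^{b-1} i=\frac{b(b-1)}{2}$ then gives
\[
\g(S)=\sum_{i=0}^{b-1} k_i = \frac{1}{b}\sum_{i=0}^{b-1}\bigl(w(i)-i\bigr)=\frac{1}{b}\Bigl(\sum_{w\in\Ap(S,b)} w\Bigr)-\frac{1}{b}\cdot\frac{b(b-1)}{2},
\]
which is exactly the claimed identity $\g(S)=\frac{1}{b}\bigl(\sum_{w\in \Ap(S,b)} w\bigr)-\frac{b-1}{2}$. I do not anticipate any serious obstacle here; the only point requiring care is the membership characterization within each class (that the class meets $S$ in a full tail starting at $w(i)$), which is precisely where the hypothesis $b\in S$ is used. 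The remainder is elementary arithmetic.
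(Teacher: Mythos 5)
Your proof is correct and complete: the paper itself gives no proof of this statement (it is quoted from \cite[Proposition 2.12]{libroRosales}), and your argument --- partitioning $\mathbb N$ into residue classes modulo $b$, observing that class $i$ meets $S$ exactly in the tail $\{w(i), w(i)+b,\ldots\}$, and counting $k_i=\frac{w(i)-i}{b}$ gaps per class --- is precisely the standard proof of this classical formula found in the cited source. The one step requiring care, the upward-closedness of $S$ within each residue class coming from $b\in S$ together with the minimality of $w(i)$, is handled correctly.
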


From the previous result, our objective is reduced to study the Apéry set. The next proposition describes explicitly the set $\Ap(S,\m(S))$. Let $B$ be the set $\big\{\lambda \r(S) \mid \lambda\in \{0\} \cup [l] \big\}$, where
$l=\big\lfloor\frac{\Fb(S)-1}{\r(S)}\big\rfloor$ if $\m(S)$ and $\r(S)$ are coprime, or $l = \max \left\{ k \in \big[\tfrac{\m(S)}{\gcd\left(\m(S), \r(S)\right)}-1\big] \mid k \r(S) < \Fb(S) \right\}$, otherwise.

\begin{proposition}\label{prop:formulaAp}
Let $S\in \CaA(f)$ be a numerical semigroup with $\rank_{\CaA(f)}(S)=2$. Then, $\Ap(S,\m(S))$ is the disjoint union of $B$ and the set
$$
\bigcup_{i\in [\m(S)]}\big\{\Fb(S) + i \mid  \Fb(S) + i \not\equiv b \mod \m(S),\,\forall b\in B\big\}.
$$
\end{proposition}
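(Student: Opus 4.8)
The plan is to reduce everything to the explicit description of $S$ obtained in the rank-two characterizations. Writing $m=\m(S)$, $r=\r(S)$ and $f=\Fb(S)$, Corollary \ref{coro:rank2} together with Theorems \ref{thr:rank2} and \ref{thr:rank2numerical} gives $S=\langle m,r\rangle\cup\{f+1,f+2,\longrightarrow\}$ in both the coprime and the non-coprime cases. From this I would immediately record two facts that drive the whole argument: first, $S\cap\{0,\dots,f-1\}=\langle m,r\rangle\cap\{0,\dots,f-1\}$, since every added element $f+j$ exceeds $f-1$; and second, every integer strictly greater than $f$ lies in $S$. By Proposition \ref{lem:Ap_congruencias}, $\Ap(S,m)$ consists of exactly one element $w(c)$ per residue class $c$ modulo $m$, namely the least element of $S$ in that class, so it suffices to identify these minima.

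Next I would show that the Apéry elements below $f$ are exactly $B$. If $w\in\Ap(S,m)$ with $w<f$, then $w\in\langle m,r\rangle$ by the first fact, and minimality of $w$ in its class forces the representation $w=am+br$ to have $a=0$ (otherwise $w-m=(a-1)m+br\in S$ would be a smaller element of the same class), so $w=\lambda r$ for some $\lambda\ge 0$. The constraint $\lambda r<f$ caps $\lambda$, and in the non-coprime case a second cap $\lambda\le\tfrac{m}{\gcd(m,r)}-1$ comes from the periodicity $\tfrac{m}{\gcd(m,r)}\,r\equiv 0 \bmod m$: if $\lambda\ge\tfrac{m}{\gcd(m,r)}$, then $\big(\lambda-\tfrac{m}{\gcd(m,r)}\big)r$ is a strictly smaller element of $S$ in the same class, contradicting minimality. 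These two caps are precisely the definition of $l$, so $w\in B$; conversely each $\lambda r\in B$ satisfies $\lambda r<f$ and is the least element of $S$ in its class, hence lies in $\Ap(S,m)$.

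Finally I would treat the classes not met by $B$. For such a class $c$, its Apéry representative $w(c)$ cannot be smaller than $f$ (it would then be in $B$) nor equal to $f$ (since $f\notin S$), so $w(c)>f$; by the second fact $w(c)$ is simply the smallest integer exceeding $f$ in class $c$, which equals $f+i$ for the unique $i\in[m]$ with $f+i\equiv c\bmod m$. As $i$ runs over $[m]$ the values $f+i$ exhaust every residue class exactly once, so $\bigcup_{i\in[m]}\{f+i\mid f+i\not\equiv b\bmod m,\ \forall b\in B\}$ contributes exactly one Apéry element per uncovered class. Together with $B$ this produces one representative per class, i.e.\ the whole of $\Ap(S,m)$, and the union is disjoint because every element of $B$ is smaller than $f$ while every $f+i$ is greater than $f$.

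I expect the main obstacle to be the second step, pinning down $B$ as the set of small Apéry elements: one must argue the correct range of $\lambda$ uniformly across the two cases, and the non-coprime situation genuinely requires the periodicity relation $\tfrac{m}{\gcd(m,r)}\,r\equiv 0\bmod m$ to rule out larger multiples of $r$ collapsing onto an already-represented class. The remaining verifications, namely the disjointness and the class-counting in the third step, are routine once $S$ has been written explicitly.
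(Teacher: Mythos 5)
Your proposal is correct and follows essentially the same route as the paper: both reduce to the explicit form $S=\langle m,r\rangle\cup\{f+1,\longrightarrow\}$ from the rank-two characterizations, invoke Proposition \ref{lem:Ap_congruencias} to view $\Ap(S,m)$ as one minimal representative per residue class, use the periodicity $\tfrac{m}{\gcd(m,r)}\,r\equiv 0 \bmod m$ to identify the sub-$f$ representatives with $B$, and place the remaining representatives at $f+i$ for the uncovered classes. Your write-up is, if anything, slightly more explicit than the paper's in justifying why no Apéry element below $f$ lies outside $B$.
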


\begin{proof}
Let $S \in \CaA(f)$ be a numerical semigroup with $\rank_{\CaA(f)}(S) = 2$. By Theorems~\ref{thr:rank2} and \ref{thr:rank2numerical},
\[
S = \langle \m(S), \r(S) \rangle \cup \{\Fb(S),\longrightarrow \}.
\]
For the sake of simplicity, we write $m, r$ and $f$ instead of $\m(S), \r(S)$ and $\Fb(S)$, respectively.
By Proposition \ref{lem:Ap_congruencias}, we know that $\Ap(S,m)=\{0=w(0), w(1), \ldots, w(m-1)\}$, where $w(i)$ is the least element of $S$ such that $w(i)\equiv i \mod m$, for all $i\in\{0\}\cup [m-1]$. Suppose that $ \langle \m(S), \r(S) \rangle$ is not a numerical semigroup and let $d=\gcd(m,r)$. Notice that for every natural number $k$,
\[
(k\tfrac{m}{d}+i)r\equiv k\tfrac{m}{d}d\tfrac{r}{d}+ ir\equiv km\tfrac{r}{d}+ir\equiv ir\mod m,
\]
for all $i\in \{0\}\cup[\tfrac{m}{d}-1]$. Thus, defining $l = \max \left\{ k \in \left[\tfrac{m}{d} - 1\right] \mid k r < f \right\}$, we obtain that $B=\big\{\lambda \r(S) \mid \lambda\in \{0\} \cup [l]\big\}\subset \Ap(S,m)$.
Now, let $x\in \Ap(S,m)\setminus B$. So, $x>\Fb(S)$, which implies that $x=\Fb(S)+i$ for some $i\in [m]$. We conclude that
\[
\Ap(S,m)=B \sqcup\big\{\Fb(S)+i\mid\Fb(S)+i\not\equiv b, \text{ for any }b\in B\big\}.
\]
If $\langle m, r \rangle$ is a numerical semigroup, then by Proposition~\ref{prop:carcApN} and Theorem~\ref{thr:rank2numerical}, it follows that the Apéry set admits the same decomposition with $l = \left\lfloor \tfrac{f-1}{r} \right\rfloor$.
\end{proof}

Two examples showing the application of Propositions \ref{prop:formulaGENUS} and \ref{prop:formulaAp}.

\begin{example}
Let $S=\CaA(27)[\{10,13\}]$ be the numerical semigroups given in Example \ref{ex:NScoprime}. From Proposition \ref{prop:formulaAp}, we obtain that \[
\Ap(S, 10)=\{0,13,26,28,29,31,32,34,35,37\}.\]
By applying Proposition \ref{prop:formulaGENUS}, we conclude that \[
\g(S)=\tfrac{1}{10}(0+13+26+28+29+31+32+34+35+37)-\frac{9}{2}=\frac{265}{10}-\frac{9}{2}=22.
\]
\end{example}

\begin{example}
Let $S=\langle 10,14\rangle\{84,\longrightarrow \}=\CaA(83)[\{10,14\}]$ be a numerical semigroup. Thus, $\Ap(S, 10)=\{0,14,28,42, 56, 85, 87, 89, 91, 93\}$, and then $\g(S)=54$.
\end{example}

\section{A partition of $\CaA(f)$}\label{Sec3}

Let $m, f\in \CaC$. We denote by $\CaA(f,m)$ the set of all possibles $\CaA$-semigroups $S$ such that $\Fb(S)=f$ and $\m(S)=m$.
In this section, we provide an alternative procedure to compute $\CaA(f)$, using the sets $\CaA(f,m)$. Moreover, we show that $\CaA(f,m)$ can be arranged in a rooted tree. Again, fix a monomial order $\preceq$ on $\mathbb N^p$.

Given $f$ and $m$ in a cone $\CaC$ with $f\notin\langle m \rangle$, we define
$$\Delta(f,m)=\langle m\rangle\cup \Delta(f).$$
The following result determines under what conditions $\CaA(f,m)\ne\emptyset$.

\begin{proposition}
Let $m, f\in \CaC$.  The set  $\CaA(f,m)\ne\emptyset$ if and only if
\begin{itemize}
\item $m\prec f$, or there does not exist any $z\in \CaC$ such that $f\prec z \prec m$.
\item  $f\notin\langle m\rangle$.
\end{itemize}
\end{proposition}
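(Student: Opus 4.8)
The plan is to prove both implications around the single candidate semigroup $\Delta(f,m)=\langle m\rangle\cup\Delta(f)$, which (when it is well defined) is the natural minimal element of $\CaA(f,m)$. For necessity I would extract the two conditions from an arbitrary $S\in\CaA(f,m)$; for sufficiency I would show that, under the two conditions, $\Delta(f,m)$ actually belongs to $\CaA(f,m)$, so that the set is nonempty.

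For the necessity direction, take $S\in\CaA(f,m)$, so $\m(S)=m$ and $\Fb(S)=f$. Since $m\in S$ we have $\langle m\rangle\subseteq S$, while $f\in\CaH(S)=\CaC\setminus S$; hence $f\notin\langle m\rangle$, which is the second condition. For the first, suppose $m\not\prec f$. As $m\in S$ and $f\notin S$ give $m\ne f$, this forces $m\succ f$. If some $z\in\CaC$ satisfied $f\prec z\prec m$, then $z\succ f=\max_\preceq\CaH(S)$ would yield $z\notin\CaH(S)$, i.e. $z\in S\setminus\{0\}$, so $z\succeq\m(S)=m$, contradicting $z\prec m$. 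Thus no such $z$ exists, giving the second alternative of the first condition.

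For sufficiency I would assume both conditions (the second one makes $\Delta(f,m)$ well defined) and verify four facts about $S=\Delta(f,m)$. That $S$ is a $\CaC$-semigroup is routine: it is additively closed, contains $0$, and satisfies $\CaC\setminus S\subseteq\CaH(\Delta(f))$, which is finite. That $\Fb(S)=f$ follows from describing the gaps as $\{x\in\CaC\mid 0\prec x\preceq f,\ x\notin\langle m\rangle\}$, whose $\preceq$-maximum is $f$ precisely because $f\notin\langle m\rangle$. The delicate step is $\m(S)=m$: this is exactly where the first condition enters, split into the case $m\prec f$ (every $x\succ f$ then satisfies $x\succ m$) and the case in which no $z$ lies strictly between $f$ and $m$ (every $x\succ f$ then satisfies $x\succeq m$), so that in both cases $m$ is the $\preceq$-least nonzero element of $S$. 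Finally, one checks that $S$ is an $\CaA$-semigroup by noting that its small elements are the multiples $km\prec f$, and two such multiples $am,bm$ can be consecutive only if $(a-b)m\in\Lambda$.

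The step I expect to be the main obstacle is this last $\CaA$-verification, together with its interaction with the multiplicity. The multiplicity computation is controlled entirely by the first condition, but the two cases must be treated separately and carefully. For the $\CaA$-property the genuinely subtle point is the possibility $m\in\Lambda$: then consecutive multiples $km$ and $(k+1)m$ differ by $m\in\Lambda$, so as soon as two multiples of $m$ are both small the candidate fails to be an $\CaA$-semigroup. Handling this degeneracy — via the $m\notin\Lambda$ hypothesis already in force in the earlier rank discussions — is the crux; once $m\notin\Lambda$, the relation $(a-b)m\in\Lambda$ is impossible for nonzero $a-b$, no two small elements are consecutive, and the argument closes.
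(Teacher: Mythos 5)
Your proof takes essentially the same route as the paper's: necessity by reading the two conditions off an arbitrary $S\in\CaA(f,m)$ (your argument here is the paper's, written out in more detail), and sufficiency by producing a witness. The paper splits the sufficiency into two cases: when $m\prec f$ it quotes Theorem~\ref{thr:caracrank=1} to conclude $\Delta(f,m)\in\CaA(f,m)$, and when no $z$ satisfies $f\prec z\prec m$ it takes the witness to be $\Delta(f)$ itself, whose multiplicity is then $m$ and whose only small element is $0$. You instead verify the four required properties of $\Delta(f,m)$ directly and uniformly; since $\langle m\rangle\subseteq\Delta(f)$ in the second case, the two witnesses coincide there, so this is the same proof made self-contained.

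One correction on the point you call the crux. You are right that $m\in\Lambda$ is the dangerous case: when $m\in\Lambda$ and $m\prec f$, the pair $\{0,m\}$ already consists of consecutive small elements of \emph{every} $\CaC$-semigroup of multiplicity $m$, so $\CaA(f,m)=\emptyset$ even though both bulleted conditions can hold (e.g.\ $\CaC=\mathbb N^2$ with $m$ a canonical basis vector). But you cannot dispose of this by appealing to ``the $m\notin\Lambda$ hypothesis already in force in the earlier rank discussions'': that hypothesis belongs to Theorem~\ref{thr:caracrank=1} and is not part of the present proposition, which assumes only $m,f\in\CaC$. The paper's own proof inherits exactly the same blind spot by routing the case $m\prec f$ through Theorem~\ref{thr:caracrank=1}. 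The honest conclusion is that the sufficiency direction requires the additional hypothesis $m\notin\Lambda$ in the branch $m\prec f$ (the branch $m\succ f$ is unaffected, since there $\N(\Delta(f))=\{0\}$); with that hypothesis made explicit, your verification is complete and correct.
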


\begin{proof}
Assume $S\in \CaA(f,m)$. Since $f$ is the Frobenius element of $S$, $f\notin \langle m\rangle$ follows. If $m\prec f$, we are done. Otherwise, suppose there exists $z\in \CaC$ such that $f\prec z \prec m$. By the definition of the Frobenius element, it follows that $z\in S$, which contradicts the definition of multiplicity. Conversely, suppose that $f\notin\langle m\rangle$. If $m\prec f$, then $\Delta(f,m)\in \CaA(f,m)$ by Theorem \ref{thr:caracrank=1}. If there does not exist $z\in \CaC$ such that $f\prec z \prec m$, then $\Delta(f)\in \CaA(f,m)$.
\end{proof}

Fix  $f\in \CaC$. We define,
\begin{equation}\label{M_f}
\M(f)=\{m\in \CaC\setminus\{0\}\mid m\prec f, \text{ or }\nexists\, z\in \CaC \text{ such that } f\prec z \prec m\}.
\end{equation}
Observe that $\CaA(f)=\cup_{m\in \M(f)}\CaA(f,m)$, and thus the family $\{ \CaA(f,m) \mid m\in \M(f)\}$ forms a partition of $\CaA(f)$.

\begin{example}\label{ex:M(f)}
Consider the monomial order, the element $f\in\mathbb N^2$, and the cone $\CaC$ given by Example \ref{Ex:Sec1}. Figure \ref{fig:Delta(f)} illustrates the set $\Delta(f)$. The empty circles represent the gaps of $\Delta(f)$, the blue squares denote its minimal generators, and the red circles represent some elements in it.
\begin{figure}[h]
    \centering
    \includegraphics[scale=.3]{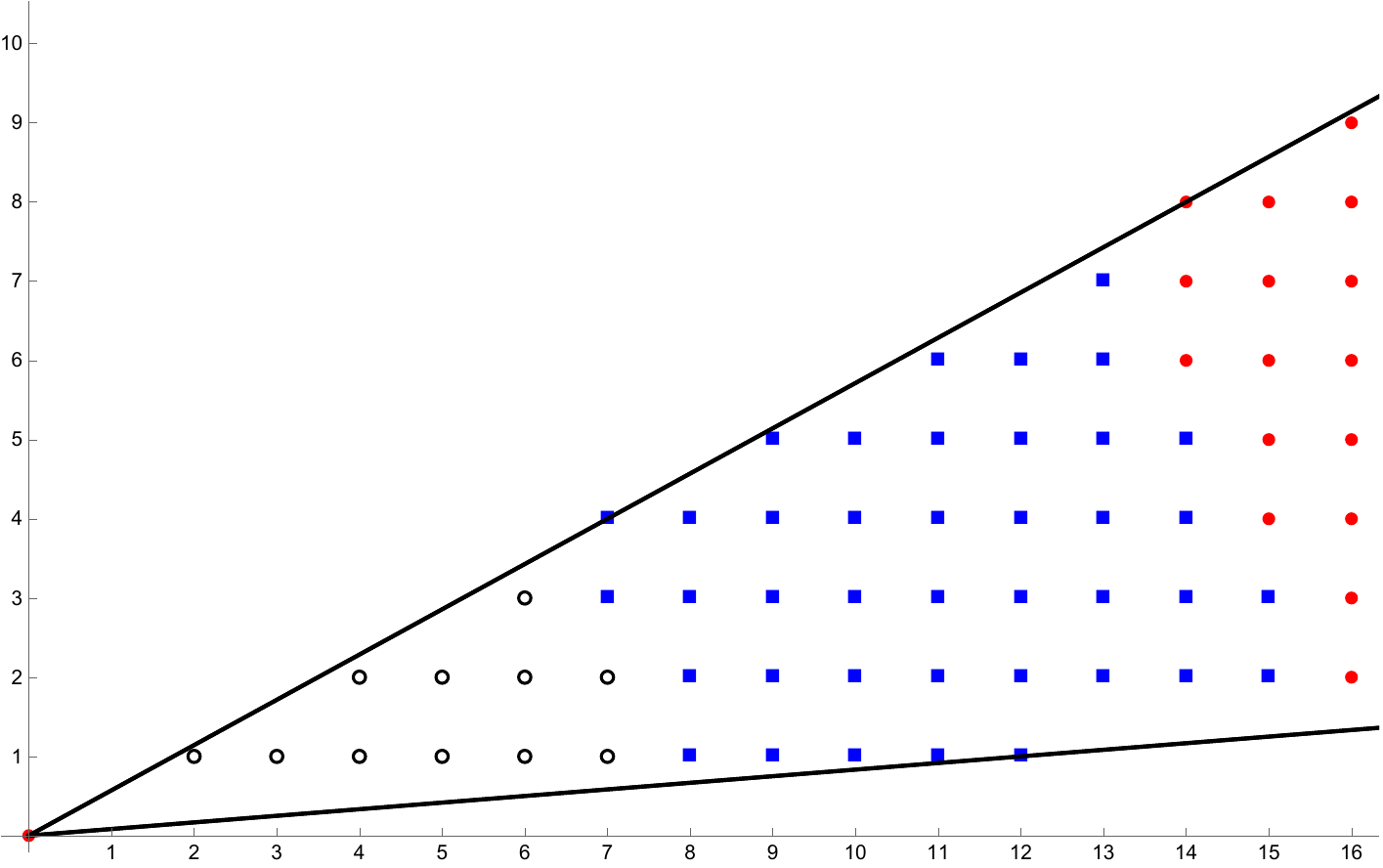}
    \caption{A graphical example of a $\Delta(f)$.}
    \label{fig:Delta(f)}
\end{figure}
So, the set $\M(f)$ is $\{(2,1), (3,1),\ldots ,(8,1),(3,2),\ldots ,(6,2),(6,3)\}$.

\end{example}

We adopt the terminology established in \cite{moreno2024ratio}, extending it from numerical semigroups to $\CaC$-semigroups. A non-empty family $\mathcal{R}$ of $\CaC$-semigroups is a ratio-covariety if verifies the following conditions:
\begin{itemize}
\item[(i)] There exists a minimal element in $\mathcal{R}$ with respect to inclusion, denoted by $\min_{\subseteq}\mathcal{R}$.
\item[(ii)] If $S,T\in \mathcal{R}$, then $S\cap T\in \mathcal{R}$.
\item[(iii)] If $S\in \mathcal{R}\setminus\{\min_{\subseteq}\mathcal R\}$, then $S\setminus\{\r(S)\}\in \mathcal{R}$.
\end{itemize}

We obtain the following result, inspired by Proposition \ref{A(f)covariedad}.

\begin{proposition}\label{prop:A(f,m)ratio-covariety}
Let $f\in \CaC$ and $m\in \M(f)$. Then, the set $\CaA(f,m)$ is a ratio-covariety, and $\Delta(f,m)$ is the minimum with respect to the inclusion.
\end{proposition}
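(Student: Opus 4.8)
The plan is to verify directly the three defining conditions of a ratio-covariety for $\CaA(f,m)$, with $\Delta(f,m)=\langle m\rangle\cup\Delta(f)$ as the candidate minimum. First I would settle the minimum. The preceding proposition characterizing when $\CaA(f,m)\ne\emptyset$ already guarantees $\Delta(f,m)\in\CaA(f,m)$ for $m\in\M(f)$; it then remains to observe that $\Delta(f,m)$ is contained in every member of the family. Indeed, if $S\in\CaA(f,m)$, then $\m(S)=m$ forces $\langle m\rangle\subseteq S$, while $\Fb(S)=f$ forces $\{0\}\cup\{x\in\CaC\mid x\succ f\}=\Delta(f)\subseteq S$; taking the union yields $\Delta(f,m)\subseteq S$, so $\Delta(f,m)=\min_\subseteq\CaA(f,m)$.

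Next I would prove closure under intersection. For $S,T\in\CaA(f,m)$, the intersection is again a $\CaC$-semigroup, and by the argument already used in Proposition \ref{A(f)covariedad} (equivalently, $\CaH(S\cap T)=\CaH(S)\cup\CaH(T)$) its Frobenius element is $\max_\preceq\{\Fb(S),\Fb(T)\}=f$. Since $m\in S\cap T$ while no nonzero element $\prec m$ lies in $S$, we obtain $\m(S\cap T)=m$. Finally $\N(S\cap T)\subseteq\N(S)$, and because $\N(S)$ contains no pair of consecutive elements, neither does $\N(S\cap T)$; hence $S\cap T$ is an $\CaA$-semigroup and belongs to $\CaA(f,m)$.

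The core of the argument is the third condition: for $S\in\CaA(f,m)\setminus\{\Delta(f,m)\}$ I must show $S\setminus\{\r(S)\}\in\CaA(f,m)$. Here I would first locate the ratio. Because $\Delta(f,m)\subsetneq S$, there is some $y\in S\setminus\Delta(f,m)$, which satisfies $y\notin\langle m\rangle$ and $y\prec f$ (it cannot exceed $f$, nor equal the gap $f$). Using $\r(S)=\min_\preceq\bigl(S\setminus\langle m\rangle\bigr)$, this gives $\r(S)\preceq y\prec f$, so the ratio is a small element. Writing $r=\r(S)$, which is a minimal generator of $S$, removing it preserves closure under addition, since no sum of two nonzero elements of $S$ equals the minimal generator $r$; thus $S\setminus\{r\}$ is a $\CaC$-semigroup. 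Then $\CaH(S\setminus\{r\})=\CaH(S)\cup\{r\}$ with $r\prec f$ keeps $\Fb=f$; the inequality $r\succ m=\m(S)$ keeps $\m=m$; and $\N(S\setminus\{r\})=\N(S)\setminus\{r\}$ inherits the no-consecutive-pair property from $\N(S)$, so the $\CaA$-structure survives. Hence $S\setminus\{\r(S)\}\in\CaA(f,m)$.

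The main obstacle, and the step requiring the most care, is precisely this third condition: both the claim $\r(S)\prec f$ and the claim that $S\setminus\{\r(S)\}$ remains a $\CaC$-semigroup hinge on the identification $\r(S)=\min_\preceq\bigl(S\setminus\langle m\rangle\bigr)$ together with the strict inclusion $\Delta(f,m)\subsetneq S$. Once these are secured, the Frobenius, multiplicity, and $\CaA$ checks reduce to routine monotonicity observations about $\CaH$ and $\N$ under deleting a single small minimal generator.
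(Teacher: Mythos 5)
Your proof is correct and follows exactly the route the paper intends (the paper omits the proof, merely noting it is ``inspired by Proposition \ref{A(f)covariedad}''): a direct verification of the three ratio-covariety axioms, with the key observations that $\Delta(f,m)\subseteq S$ for every $S\in\CaA(f,m)$ and that $S\supsetneq\Delta(f,m)$ forces $\r(S)\prec f$, so deleting the ratio (a minimal generator) preserves the Frobenius element, the multiplicity, and the no-consecutive-small-elements property. All the individual checks (Frobenius and multiplicity of $S\cap T$, $\N(S\cap T)\subseteq\N(S)$, $\N(S\setminus\{\r(S)\})=\N(S)\setminus\{\r(S)\}$) are sound.
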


Analogously to Section \ref{Sec1}, we define the graph $G(\CaA(f,m))$ with vertex set $\CaA(f,m)$, where an edge connects distinct elements $S,T \in \CaA(f,m)$ if and only if $T = S \setminus \{\r(S)\}$. As a consequence of Proposition \ref{prop:A(f,m)ratio-covariety} and \cite[Proposition 3]{moreno2024ratio}, we deduce the following result.

\begin{proposition}\label{prop:A(f,m)tree}
The graph $G(\CaA(f,m))$ is a tree with root $\Delta(f,m)$.
\end{proposition}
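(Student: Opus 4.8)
The plan is to establish Proposition~\ref{prop:A(f,m)tree} by invoking the general machinery of ratio-covarieties, exactly as was done for $G(\CaA(f))$ in Proposition~\ref{prop:grafoA_f}. By Proposition~\ref{prop:A(f,m)ratio-covariety} we already know that $\CaA(f,m)$ is a ratio-covariety with minimum element $\Delta(f,m)$; the task reduces to verifying that the ratio-removal edge structure organizes this finite family into a tree rooted at $\Delta(f,m)$. First I would recall that the graph $G(\CaA(f,m))$ has an edge from $S$ to $T$ precisely when $T = S\setminus\{\r(S)\}$, and note that property~(iii) of a ratio-covariety guarantees this operation keeps us inside $\CaA(f,m)$ whenever $S\neq\Delta(f,m)$. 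This is the analogue of the child/parent relation used in Section~\ref{Sec1}, with the multiplicity replaced by the ratio.

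The core argument is to show every vertex connects to the root by a unique descending path. For an arbitrary $S\in\CaA(f,m)\setminus\{\Delta(f,m)\}$ I would define the sequence $S_0 = S$ and, for each $i$, set $S_{i+1} = S_i\setminus\{\r(S_i)\}$ while $S_i\neq\Delta(f,m)$, and $S_{i+1}=\Delta(f,m)$ otherwise, mirroring the construction in the proof of Proposition~\ref{prop:grafoA_f}. Each step strictly decreases the cardinality of the (finite) gap complement, or rather strictly enlarges the semigroup by removing one element of the ambient cone from the gaps, so the sequence is eventually stationary at $\Delta(f,m)$; this yields a finite chain of edges joining $S$ to the root. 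The key point to check is that removing $\r(S)$ rather than $\m(S)$ still produces a well-defined, terminating descent: since the ratio is a uniquely determined element of $S$ and the process never revisits a previously seen semigroup (each removal is irreversible in the sense that the deleted element becomes a gap), no cycles arise.

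For uniqueness of the path I would argue that each $\CaC$-semigroup in $\CaA(f,m)$ has a single outgoing edge in the parent direction, because $\r(S)$ is uniquely defined as $\min_\preceq(\msg(S)\setminus\{\m(S)\})$; hence each non-root vertex has exactly one parent, which forces the connecting sequence of edges to be unique and rules out cycles, giving the tree property. The main obstacle, and the only point requiring genuine care beyond transcribing the Proposition~\ref{prop:grafoA_f} argument, is confirming that removing the ratio preserves membership in $\CaA(f,m)$ at every step---in particular that $\m(S)$ is unchanged (so we remain in the fixed multiplicity stratum) and that the Frobenius element stays equal to $f$. Both follow from the ratio-covariety axioms already secured in Proposition~\ref{prop:A(f,m)ratio-covariety}: deleting $\r(S)$ cannot affect the minimal nonzero element $\m(S)$ since $\m(S)\prec\r(S)$, and it cannot change $\Fb(S)=f$ because $\r(S)\prec f$ is a small element. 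Since these structural facts are exactly what \cite[Proposition~3]{moreno2024ratio} abstracts, the cleanest route is to state that $G(\CaA(f,m))$ is a tree with root $\Delta(f,m)$ as a direct application of that result to the ratio-covariety $\CaA(f,m)$, with the finiteness and termination supplied as above.
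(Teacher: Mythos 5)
Your proof takes essentially the same route as the paper, whose proof simply says to repeat the argument of Proposition~\ref{prop:grafoA_f} with the multiplicity replaced by the ratio: the same stationary descending sequence $S_{i+1}=S_i\setminus\{\r(S_i)\}$, with uniqueness of the path coming from the uniqueness of the ratio and the ratio-covariety structure of Proposition~\ref{prop:A(f,m)ratio-covariety}. (One wording slip worth fixing: deleting $\r(S_i)$ \emph{shrinks} the semigroup and enlarges the gap set; the descent terminates because the sets $\N(S_i)$ strictly decrease towards the finite set $\N(\Delta(f,m))$.)
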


\begin{proof}
Similarly to the proof of Proposition \ref{prop:grafoA_f}, replacing the multiplicity with the ratio.
\end{proof}

Based on \cite[Proposition 5]{moreno2024ratio}, and using Proposition~\ref{prop:A(f,m)tree}, we obtain the next result.

\begin{proposition}\label{prop:child_G(A(f,m))}
Let $f\in \CaC$, $m\in \M(f)$ and $T\in\CaA(f,m)$. The set of children of $T$ in the tree $G(\CaA(f,m))$ is
\[
\{T\cup\{x\}\mid x\in Z, \text{ and } \m(T)\prec x\prec \r(T)\},
\]
with $Z=\{x\in \SG(T)\setminus\{f\}\mid x\pm e\notin \N(T), \text{ for all } e\in \Lambda \}$.
\end{proposition}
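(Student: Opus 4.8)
The plan is to obtain the statement as the specialization of the general description of children in a ratio-covariety tree (as in \cite[Proposition 5]{moreno2024ratio}, already invoked through Proposition \ref{prop:A(f,m)tree}) to the concrete family $\CaA(f,m)$. In that framework the children of $T$ are exactly the $\CaC$-semigroups of the form $T\cup\{x\}$ obtained by adjoining a special gap $x\in\SG(T)$ that becomes the new ratio, i.e.\ satisfying $\m(T)\prec x\prec\r(T)$, and which in addition still lie in $\CaA(f,m)$. So the real work is to translate the membership $T\cup\{x\}\in\CaA(f,m)$ into the explicit conditions defining $Z$ together with $\m(T)\prec x\prec\r(T)$.

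For the forward inclusion I would take a child $S$ of $T$, so that $S=T\cup\{x\}$ with $x=\r(S)$. Since $S$ is a $\CaC$-semigroup with $S\setminus\{x\}=T$, the element $x$ is a special gap of $T$, i.e.\ $x\in\SG(T)$; and $x\neq f$ because $x=\r(S)\in S$ while $f=\Fb(S)\notin S$. The multiplicity is unchanged ($\m(T)=\m(S)=m$), hence $\m(T)\prec x$; and because the elements of $S$ lying below $x$ coincide with those of $T$ below $x$ (adjoining or deleting $x$ cannot affect smaller elements), the only minimal generator of $T$ not exceeding $x$ is $m$, which forces $x\prec\r(T)$. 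Finally, since $x\in\SG(T)\subseteq\CaH(T)$ we have $x\preceq f$, hence $x\prec f$ and therefore $\N(S)=\N(T)\cup\{x\}$; as both $S$ and $T$ are $\CaA$-semigroups, the absence of consecutive small elements involving the new point $x$ gives $x+e\notin\N(T)$ and $x-e\notin\N(T)$ for every $e\in\Lambda$, i.e.\ $x\pm e\notin\N(T)$. Thus $x\in Z$ and $\m(T)\prec x\prec\r(T)$.

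For the converse I would start from $x\in Z$ with $\m(T)\prec x\prec\r(T)$ and set $S=T\cup\{x\}$. As $x\in\SG(T)$, $S$ is a $\CaC$-semigroup; from $x\in\CaH(T)$ and $x\neq f$ we get $x\prec f$, so $\Fb(S)=f$ and $\N(S)=\N(T)\cup\{x\}$. The multiplicity is preserved since $x\succ\m(T)=m$, and $x$ becomes the ratio: being a special gap, $x$ is a minimal generator of $S$, and since adjoining $x$ leaves all smaller elements untouched there is no minimal generator of $S$ strictly between $m$ and $x$, whence $\r(S)=x$ and $S\setminus\{\r(S)\}=T$. It remains to see $S\in\CaA(f,m)$: because $T$ is an $\CaA$-semigroup and $\N(S)=\N(T)\cup\{x\}$, any pair of consecutive small elements of $S$ must involve $x$, that is, be of the form $\{x,x+e\}$ or $\{x-e,x\}$ for some $e\in\Lambda$; the condition $x\pm e\notin\N(T)$ rules both out. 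Hence $S$ is a child of $T$.

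The delicate point, and the place where this argument genuinely differs from the multiplicity case of the earlier children proposition, is the two-sided neighbour condition $x\pm e\notin\N(T)$. Here $x$ is inserted strictly between $\m(T)$ and $\r(T)$, so $x-e$ may be a genuine nonzero small element of $T$ (for instance a multiple of $m$) and must be forbidden explicitly rather than automatically; this two-sided condition also subsumes the exclusion of $\Lambda$ needed in the multiplicity setting, since $x=e_i\in\Lambda$ would force $x-e_i=0\in\N(T)$. The only other care required is the bookkeeping identifying $x$ as the new ratio, which rests on the observation that adjoining or deleting $x$ does not change the set of elements below $x$; everything else is inherited from the ratio-covariety structure established in Proposition \ref{prop:A(f,m)tree}.
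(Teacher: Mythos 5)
Your proof is correct and follows essentially the same route as the paper's: identify a child $S$ of $T$ as $T\cup\{x\}$ with $x=\r(S)$, read off $x\in\SG(T)\setminus\{f\}$, $\m(T)\prec x\prec\r(T)$ and $x\pm e\notin\N(T)$ from $S,T\in\CaA(f,m)$, and conversely check that adjoining such an $x$ preserves membership in $\CaA(f,m)$ with $\r(T\cup\{x\})=x$. The paper's proof is just a terser version of the same argument; your additional justifications (why $x\prec\r(T)$, why the two-sided neighbour condition subsumes the exclusion of $\Lambda$) are accurate fillings-in of steps the paper leaves implicit.
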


\begin{proof}
Let $S$ be a child of $T$. So, $S=T\cup\{x\}\in \CaA(f,m)$ with $x=\r(S)$, which implies that $x\in \SG(T)$ and $ \m(T)\prec x\prec \r(T)$. Since $S,T\in \CaA(f,m)$, then $x\pm e\notin \N(T)$ for any  $e\in \Lambda$. Now, let $x\in Z$ such that $\m(T)\prec x\prec \r(T)$. Therefore, $T\cup\{x\}\in \CaA(f,m)$ and $\r(T\cup \{x\})=x$. Hence, $T\cup\{x\}$ is a child of $T$.
\end{proof}

Proposition \ref{prop:child_G(A(f,m))} provides an algorithmic procedure for computing the set $\CaA(f,m)$ (Algorithm \ref{ComputeCaA(f,m)}).

\begin{algorithm}[H]
\caption{Computing the set $\CaA(f,m)$.}\label{ComputeCaA(f,m)}
\KwIn{A non-negative integer cone $\CaC$, $f\in\CaC\setminus\{0\}$ and $m\in \M(f)$.}
\KwOut{The set  $\CaA(f,m)$.}
$A \leftarrow \{\Delta(f,m)\}$\;
$X\leftarrow A$\;

\While {$A\ne\emptyset$}{
    $Y \leftarrow \emptyset$\;
    $B\leftarrow A$\;
    \While{$B\neq \emptyset$ }
    {$T \leftarrow \text{First}(B)$\;
    $Z \leftarrow \{x\in SG(T)\setminus\{f\} \mid  m\prec x\prec \r(T)\text{ and }x\pm e\notin \N(T), \forall e\in \Lambda\}$\;
    $Y \leftarrow Y\cup\{T\cup\{x\}\mid x\in Z\}$\;
    $B \leftarrow B\setminus \{T\}$\;
    }
    $A \leftarrow Y$\;
    $X \leftarrow X\cup Y$\;
    }
\Return{$X$}
\end{algorithm}

To fulfil the goal of this section, we now turn to the computation of $\CaA(f)$ using its partition and applying Algorithm \ref{ComputeCaA(f,m)}, as exemplified below.

\begin{example}
Follow with Example \ref{ex:M(f)}, and fix $m=(3,1)\in \M(f)$. Thus, $\Delta(f,m)=\langle (3,1) \rangle \cup \Delta \big((7,2)\big)$. Then, applying Algorithm \ref{ComputeCaA(f,m)}, we obtain three $\CaA$-semigroups belonging to $\CaA(f,m)$,
$$\CaA(f,m)=\Big\{J \cup\Delta(f,m) \mid J\in \big\{\{(4,2)\},\{(5,1)\},\{(4,2),(5,1)\}   \big\} \Big\} .$$
Using the same procedure on each element of $\M(f)$, we again obtain the set $\CaA(f)$ shown in Figure \ref{fig:tree}.
\end{example}

In \cite[Section 6]{rosales2023numerical}, an algorithm is provided for computing the set $\CaA(f,m)$ for a numerical semigroup according to whether the Frobenius number is greater or less than $2m$. The following section generalizes some results in that paper, and introduces a partition of the set $\CaA(f,m)$.

\section{A partition of $\CaA(f,m)$}\label{Sec4}

Let $\CaC\subseteq \mathbb N^p$ be a non-negative integer cone and $\preceq$ a monomial order on $\mathbb N^p$. Consider that the linear function $\pi(x_1,\ldots,x_p)=\sum_{i\in[p]}a_{i}x_i$ defines the first row of a fixed non-singular matrix associated with the given monomial order (recall that we consider that $a_{i}>0$ for all $i$). Fix in this section two elements $f,m\in \CaC$, and consider $\mathcal I (f,m)=\{x\in \CaC \mid m\preceq x \prec f\}$.

When $f\prec 2m$, the following result generalizes \cite[Proposition 46]{rosales2023numerical}. The set  $M(f)$ is defined in \eqref{M_f}.

\begin{proposition}
Let $f,m\in\CaC\setminus\{0\}$ be such that $m\in M(f)$, and $f\prec 2m$. Then, it is satisfied:

\begin{enumerate}
\item If $f\prec m$, then $\CaA(f,m) =\{\Delta (f)\}$.
\item If $\mathcal I (f,m)=\{m=m_1\prec \cdots \prec m_h\}$ with $h\ge 1$, then
\begin{multline*}
\CaA(f,m) =\big\{ \{m\}\cup A \cup \Delta (f) \mid m+e\notin A,\, \forall e\in \Lambda,\\
\text{ and } A\subseteq \mathcal I (f,m) \text{ contains no consecutive elements} \big\}.
\end{multline*}
\end{enumerate}
\end{proposition}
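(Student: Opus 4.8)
The plan is to prove both assertions by exploiting the single structural consequence of the hypothesis $f\prec 2m$: since $\preceq$ is compatible with addition, any two nonzero elements $s_1,s_2$ of a $\CaC$-semigroup with multiplicity $m$ satisfy $s_1+s_2\succeq 2m\succ f$. Hence every sum of two nonzero elements already lies in $\{x\in\CaC\mid x\succ f\}\subseteq\Delta(f)$, so ``closure under addition'' imposes no condition whatsoever on the small part of the semigroup. Concretely, I would first record the reduction that any $S\in\CaA(f,m)$ is determined by its small elements via $S=\N(S)\cup\{x\in\CaC\mid x\succ f\}$, with $\N(S)\setminus\{0\}\subseteq\mathcal{I}(f,m)$, because $m=\m(S)$ is the $\preceq$-least nonzero element and $f=\Fb(S)$ bounds the small ones from above. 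The two cases then correspond to $f\prec m$ (part 1) and $m\prec f$, i.e. $\mathcal{I}(f,m)\neq\emptyset$ (part 2).

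For part (1), where $f\prec m$, the set $\mathcal{I}(f,m)$ is empty, so $S$ has no nonzero small element and therefore $S=\{0\}\cup\{x\succ f\}=\Delta(f)$. It then remains only to check $\Delta(f)\in\CaA(f,m)$: it is trivially an $\CaA$-semigroup, since its sole small element is $0$, and $\Fb(\Delta(f))=f$; moreover the defining clause of $m\in M(f)$ in the case $f\prec m$ — that no $z\in\CaC$ satisfies $f\prec z\prec m$ — says precisely that $m$ is the $\preceq$-least element exceeding $f$, i.e. $\m(\Delta(f))=m$. Hence $\CaA(f,m)=\{\Delta(f)\}$.

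For part (2) I would prove the two inclusions. Given $S\in\CaA(f,m)$, the reduction above writes $S=\{m\}\cup A\cup\Delta(f)$ with $A=\N(S)\setminus\{0\}\subseteq\mathcal{I}(f,m)$; the $\CaA$-property, which forbids consecutive small elements, yields at once that $A$ has no two consecutive elements and that $m+e\notin A$ for every $e\in\Lambda$. Conversely, for any $A\subseteq\mathcal{I}(f,m)$ with these properties, set $S=\{m\}\cup A\cup\Delta(f)$. By the opening remark $S$ is a $\CaC$-semigroup; since $A\subseteq\mathcal{I}(f,m)$ forces every nonzero element to be $\succeq m$ while $m\in S$, we get $\m(S)=m$; as $A\cup\{m\}\subseteq\{x\prec f\}$ while every $x\succ f$ lies in $S$ and $f\notin S$, we get $\Fb(S)=f$; and the hypotheses on $A$ translate back into the absence of consecutive elements in $\N(S)=\{0,m\}\cup A$, giving $S\in\CaA(f,m)$.

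The delicate point — and the step I expect to be the real obstacle — is the faithful translation of ``$\N(S)$ has no two consecutive elements'' into the combinatorial conditions on $A$, since this must account not only for the pairs inside $A$ and the pairs $\{m,m+e\}$, but also for the boundary pairs $\{0,e\}$ and $\{m,m-e\}$ with $e\in\Lambda$. The pair $\{m,m-e\}$ is harmless because $m-e\prec m$ keeps $m-e$ out of $\mathcal{I}(f,m)$, and similarly any basis vector lying below $m$ cannot belong to $A$; under the standing hypotheses guaranteeing $\CaA(f,m)\neq\emptyset$ (in particular $m\notin\Lambda$) the $\{0,e\}$ pairs reduce to the already-recorded constraints, so no condition beyond ``$m+e\notin A$'' and ``$A$ has no consecutive elements'' survives. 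I would make this bookkeeping fully explicit and confirm that no further constraint on $A$ is generated — which is exactly where $f\prec 2m$ is indispensable, since for $f\succeq 2m$ an element such as $2m$ could re-enter the small region, simultaneously obstructing the automatic closure and creating new consecutive pairs.
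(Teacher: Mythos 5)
The paper states this proposition without proof (it is presented as a generalization of \cite[Proposition 46]{rosales2023numerical}), so there is no in-paper argument to compare against. Your strategy is the natural one and its core is sound: since $\preceq$ is compatible with addition, $f\prec 2m$ forces every sum of two nonzero elements of a semigroup with multiplicity $m$ to exceed $f$, so additive closure is automatic once $\Delta(f)\subseteq S$, and membership in $\CaA(f,m)$ reduces entirely to the combinatorics of $\N(S)=\{0,m\}\cup A$ with $A\subseteq \mathcal I(f,m)$. Part (1), the identification $\m(\Delta(f))=m$ from the defining clause of $\M(f)$, and both inclusions in part (2) are argued correctly at the level of the pairs inside $A$, the pairs $\{m,m+e\}$, and the pairs $\{m,m-e\}$.

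The one genuine loose end is exactly the point you flag and then dispose of by appeal to ``standing hypotheses guaranteeing $\CaA(f,m)\neq\emptyset$ (in particular $m\notin\Lambda$)'': no such hypothesis exists in the statement or anywhere in the paper. The hypotheses are only $m\in \M(f)$ and $f\prec 2m$, and they do not exclude $m\in\Lambda$ with $m\prec f$, nor $A\cap\Lambda\neq\emptyset$. In that situation $\{0,e\}\subseteq\N(S)$ for $e=m$ (or $e=a\in A\cap\Lambda$), and under the paper's literal definition of $\CaA$-semigroup --- which quantifies over all $s\in S$, including $s=0\in\N(S)$ --- the set $\{m\}\cup A\cup\Delta(f)$ would fail to be an $\CaA$-semigroup even though it satisfies the two stated conditions on $A$, so the displayed equality would be false as written. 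This is arguably a defect of the statement rather than of your argument (the paper itself treats $\langle m\rangle\cup\Delta(f)$ with $m\in\Lambda$, $m\prec f\prec 2m$ as an $\CaA$-semigroup in the discussion before Theorem \ref{thr:caracrank=1}, which is only consistent with a reading of the definition that ignores the small element $0$). But you cannot both take the definition literally in the forward direction and wave the $\{0,e\}$ pairs away in the converse: you should either adopt and state explicitly the convention that only nonzero small elements are constrained, or add the hypothesis $(\{m\}\cup A)\cap\Lambda=\emptyset$ to the characterization. With that choice made explicit, the proof is complete.
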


From now on, in this section, assume that $f\succ 2m$. Consider $\bar e=\min_\preceq\big\{e\in \Lambda \mid \pi(e)=\max\{\pi(e')\mid e'\in \Lambda\}\big\}$, and
$$\Upsilon = \{x\in \CaC\mid x\prec f\text{ and } \pi(x)> \pi(f-m-\bar e) \}.$$
The next proposition generalizes \cite[Proposition 47]{rosales2023numerical} from numerical semigroups to $\CaC$-semigroups.

\begin{proposition}\label{prop:Upsilon}
Let $S$ be a $\CaC$-semigroup, $m=\m(S)$, $f=\Fb(S)$, and assume that $f\succ 2m$. Then, $S$ is an $\CaA$-semigroup if and only if these conditions hold:
\begin{enumerate}
\item[\emph{C1.}]\label{C1} If $x,x+e\in \Upsilon$ with $e\in \Lambda$, then $\{x,x+e\}\not\subset S$.
\item[\emph{C2.}] If there exists $y\in S$ such that $\pi(y)=\pi(f-m-\bar e)$, then $y+e\notin S$ for every $e\in \Lambda$.
\end{enumerate}
\end{proposition}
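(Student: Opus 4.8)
The plan is to prove both implications directly from the defining property of an $\CaA$-semigroup, namely that $S$ contains no pair of consecutive small elements $\{s,s+e\}\subseteq \N(S)$ with $e\in\Lambda$. Throughout I write $\theta=\pi(f-m-\bar e)=\pi(f)-\pi(m)-\pi(\bar e)$ and use two elementary facts: since $\pi$ is the first row of the matrix defining $\preceq$, one has $\pi(a)<\pi(b)\Rightarrow a\prec b$; and by the choice of $\bar e$, $\pi(e)\le\pi(\bar e)$ for every $e\in\Lambda$. Note also that $\pi(m)>0$, because $m\neq 0$ has nonnegative entries and $\pi$ has strictly positive coefficients.

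The forward implication is the easy half. If $x,x+e\in\Upsilon$ were both in $S$, then both are $\prec f$ and hence lie in $\N(S)$; as $e\in\Lambda$ this contradicts $S$ being an $\CaA$-semigroup, giving C1. For C2, suppose $y\in S$ with $\pi(y)=\theta$ and $y+e\in S$ for some $e\in\Lambda$. Then $\pi(y+e)=\theta+\pi(e)\le \pi(f)-\pi(m)<\pi(f)$, so $y+e\prec f$ and $\{y,y+e\}\subseteq\N(S)$, again contradicting the $\CaA$-property.

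The real work is the converse. Assume C1 and C2 and suppose, for contradiction, that $S$ fails to be an $\CaA$-semigroup, so there is a bad pair $x,x+e\in\N(S)$ with $e\in\Lambda$ and $x+e\prec f$. The idea is to push this pair upward by multiples of $m$ until its smaller entry reaches the threshold $\theta$. Since $\pi(x+km+e)=\pi(x+e)+k\pi(m)\to\infty$ and $\preceq$-down-sets are finite, there is a largest $k\ge 0$ with $x+km+e\prec f$; set $x'=x+km$, so that $x',x'+e\in S$ with $x'+e\prec f$, while $x'+m+e\succ f$ (it cannot equal $f$, being a sum of elements of $S$). The last relation yields $\pi(x')+\pi(m)+\pi(e)\ge\pi(f)$, hence $\pi(x')\ge\pi(f)-\pi(m)-\pi(e)\ge\theta$. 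If $\pi(x')>\theta$, then both $x'$ and $x'+e$ are $\prec f$ with $\pi$-value exceeding $\theta$, so both lie in $\Upsilon\cap S$, contradicting C1; if $\pi(x')=\theta$, then C2 applied to $y=x'$ forces $x'+e\notin S$, contradicting $x'+e\in S$. Either way we reach a contradiction, so $S$ is an $\CaA$-semigroup.

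The main obstacle is this converse, and specifically the realization that an arbitrary consecutive pair of small elements—whose $\pi$-values may lie well below $\theta$—can be normalized into the threshold band by repeatedly adding $m$, after which the maximality of $k$ pins the smaller entry's $\pi$-value to be at least $\theta$. The uniformity of the bound across all directions $e\in\Lambda$ is exactly what the choice of $\bar e$ (maximizing $\pi$) secures, so that the single threshold $\theta$ governs every direction simultaneously. The standing hypothesis $f\succ 2m$ frames this regime, the complementary case $f\prec 2m$ being treated by the preceding proposition, and the finiteness of $\preceq$-down-sets guarantees that the maximal $k$ exists.
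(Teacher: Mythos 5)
Your proof is correct and follows essentially the same route as the paper's: the forward direction is the same easy check, and the converse rests on the same key device of translating a consecutive pair by multiples of $\m(S)$ until maximality of $k$ forces $\pi(x+km)\ge\pi(f-m-\bar e)$, after which C1 or C2 applies according to whether the inequality is strict. Your contrapositive formulation even handles uniformly what the paper splits into three cases, so nothing is missing.
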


\begin{proof}
Consider the $\CaA$-semigroup $S$. Hence, if there exists some $x\in S$, with $x+e\in S$ for some $e\in \Lambda$, then $x+e\succ f$. Thus, $x+e\notin \Upsilon$, and the condition {\rm C1} follows. To check the condition {\rm C2}, suppose that there exists $y \in S$ with $\pi(y)=\pi(f-m-\bar e)$ such that $y+e\in S$ for some $e\in \Lambda$. So, $y\prec f$, and $\pi(y+e)=\pi(f-m-\bar e+e)< \pi(f)$, a contradiction. Moreover, for any $y\in S$ with $y\prec f$, $y-e\notin S$ for every $e\in \Lambda$.

Conversely, assume the conditions {\rm C1} and {\rm C2} hold, and let $x,x+e\in \CaC$ with $e\in \Lambda$ such that $x,x+e\prec f$. If $\pi(x)$ and $\pi(x+e)$ are strictly greater than $\pi(f-m-\bar e)$, then $x,x+e\in \Upsilon$, and $\{x,x+e\}\not\subset S$. In case that $\pi(x)=\pi(f-m-\bar e)$, then the condition {\rm C2} implies that $x+e\notin S$. Suppose that $\pi(x+e)\leq \pi(f-m-\bar e)$, and set $k=\max\{n\in \mathbb N\mid x+e+nm\prec f\}$. Trivially, $x+km, x+e+km\prec f$. Since $\pi(x+e+(k+1)m)\geq \pi(f)$, we obtain that $\pi(x+e+km)\geq \pi(f-m) > \pi(f-m-\bar e)$, and then $x+e+km\in \Upsilon$. If we assume that $\pi(x+km)< \pi(f-m-\bar e)$, then $\pi(f)> \pi(x+(k+1)m+\bar e)\ge  \pi(x+(k+1)m+e) \ge \pi(f)$, which is a contradiction. Thus, $\pi(x+km)\geq \pi(f-m-\bar e)$. In the case $\pi(x+km)> \pi(f-m-\bar e)$, we can assert that $x+km\in \Upsilon$, and condition {\rm C1} ensures that $\{x+km,x+e+km\}\not\subset S$. It means that $\{x,x+e\}\not\subset S$. Otherwise, if $\pi(x+km)=\pi(f-m-\bar e)$, then the element $x+e+km\notin S$, so $x+e\notin S$.
Therefore, no consecutive elements smaller than $f$ belong to $S$. That is, $S$ is an $\CaA$-semigroup.
\end{proof}

\begin{remark}
Note that since $f\succ 2m\succ 0$, then $\pi(f-m-\bar e)\geq \sum_{i\in [p]} a_im_i-a_j$ with $a_j= \pi(\bar e)$. However, this lower bound can be non-positive. For example, consider $\pi(x,y)=x+4y$, and $S$ the $\CaC$-semigroup minimally generated by $\{(2,0),(5,0),(2,1),(3,1)\}$. We have that $\m(S)=(2,0)$, $\Fb(S)=(3,0)$ and $\bar e=(0,1)$. Thus, $\pi(f-m-\bar e)=-3$. Hence, in that case, $\Upsilon= \{x\in \CaC\mid x\prec f\}$, and the previous proposition is trivial and not interesting. Therefore, the result is generally more useful when the set $\Upsilon$ is minimized, that is, when $\pi(f-m-\bar e)$ is maximized.
\end{remark}

Given $S\in \CaA(f,m)$, $B(S)$ denotes the elements in $\Upsilon$ that do not belong to $S$. Proposition \ref{prop:Upsilon} implies that if $x,x+e\in \Upsilon$, for some $e\in \Lambda$, then $\{x,x+e\}\cap B(S)$ is not empty.
We denote by $B(f,m)$ the set
\begin{multline*}
\big\{B\subseteq \Upsilon \setminus \langle m\rangle \mid \{x,x+e\}\cap B\neq \emptyset \text{ when }\{x,x+e\}\subseteq \Upsilon, \\ f\notin \langle m\cup (\Upsilon\setminus B)\rangle,
\text{ and } \langle m\cup (\Upsilon\setminus B)\rangle\cap B=\emptyset
\big\}.
\end{multline*}
For any $B\in B(f,m)$, $\CaA(f,m,B)=\{S\in \CaA(f,m)\mid B(S)=B\}$. Note that, if $B\in B(f,m)$, then $\CaA(f,m,B)$ is not empty since
$$\Delta(f,m,B)=\langle m \rangle \cup \big( \Upsilon\setminus B  \big)\cup \Delta (f)\in \CaA(f,m,B).$$

\begin{example}\label{ex:Delta(f,m,B)}
Consider $\CaC\subset\mathbb N^2$ the cone with extremal rays determined by $(12,1)$ and $(7,4)$, and fix $f=(9,2)$, $m=(2,1)$, $\preceq$ the degree lexicographic order on $\mathbb N^2$. So, $\pi(x,y)=x+y$, $\bar e=(0,1)$, $\pi(f-m-\bar e)=7$ and
$$\Upsilon=\{(6,2),(6,3), (7,1),(7,2),(7,3),(7,4),(8,1),(8,2),(8,3),(9,1)\}.$$
A set belonging to $B(f,m)$ is $B=\{(6,2),(7,1),(7,3),(8,2),(9,1)\}$, and $\Delta(f,m,B)$ is an $\CaA$-semigroup. Figure \ref{fig:Delta(f,m,B)} illustrates this example. The empty circles represent the gaps of $\Delta(f,m,B)$, the blue squares denote its minimal generators, the red circles represent elements in it, and the set $\Upsilon$ is delimited by the dashed lines.
\begin{figure}[h]
    \centering
    \includegraphics[scale=.4]{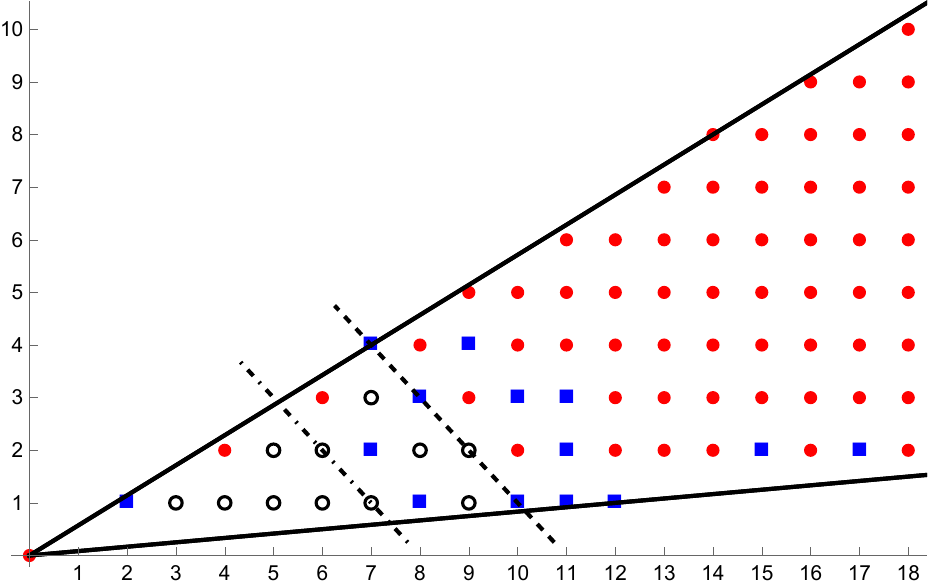}
    \caption{A graphical example of a $\Delta(f,m,B)$.}
    \label{fig:Delta(f,m,B)}
\end{figure}
\end{example}

The result established in \cite[Proposition 48]{rosales2023numerical} can be generalized to $\CaC$-semigroups.

\begin{proposition}
The set $\{\CaA(f,m,B)\mid B\in B(f,m)\}$ is a partition of $\CaA(f,m)$.
\end{proposition}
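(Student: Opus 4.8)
The plan is to show two things: that the sets $\CaA(f,m,B)$ cover $\CaA(f,m)$, and that distinct $B$ give disjoint sets. Both facts should follow almost immediately from the definitions, with the substantive content having already been established in Proposition \ref{prop:Upsilon} and the preceding discussion of $B(f,m)$.

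First I would establish the covering. Let $S\in \CaA(f,m)$ be arbitrary, and set $B=B(S)=\Upsilon\setminus S$. The task is to verify that $B\in B(f,m)$, so that $S\in \CaA(f,m,B)$ for a legitimate index $B$. There are three conditions to check. The condition $B\subseteq \Upsilon\setminus\langle m\rangle$ holds because $B\subseteq \Upsilon$ by definition of $B(S)$ and, since $\langle m\rangle\subseteq S$, no element of $\langle m\rangle$ lies in $\Upsilon\setminus S$. The intersection condition $\{x,x+e\}\cap B\neq\emptyset$ whenever $\{x,x+e\}\subseteq\Upsilon$ is exactly the remark recorded after Proposition \ref{prop:Upsilon}: condition \emph{C1} forces that not both $x$ and $x+e$ lie in $S$, hence at least one lies in $B$. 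For $f\notin\langle m\cup(\Upsilon\setminus B)\rangle$, note that $\Upsilon\setminus B=\Upsilon\cap S\subseteq S$ and $m\in S$, so $\langle m\cup(\Upsilon\setminus B)\rangle\subseteq S$; since $f=\Fb(S)\notin S$, the claim follows. Finally $\langle m\cup(\Upsilon\setminus B)\rangle\cap B=\emptyset$ holds because the left generated set sits inside $S$ while $B$ is disjoint from $S$. Thus $B\in B(f,m)$ and $S$ is covered.

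Next I would prove disjointness. Suppose $B,B'\in B(f,m)$ with $\CaA(f,m,B)\cap \CaA(f,m,B')\neq\emptyset$, witnessed by some $S$ in the intersection. Then by definition $B=B(S)=B'$, so the sets are either identical or disjoint. This is immediate since $B(S)=\Upsilon\setminus S$ is determined by $S$ alone, giving a well-defined map $S\mapsto B(S)$, and the sets $\CaA(f,m,B)$ are precisely its fibers.

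I do not anticipate a genuine obstacle here: once $B(S)$ is recognized as a function of $S$, the partition statement is the standard assertion that the nonempty fibers of a map partition its domain, combined with the nonemptiness already noted via $\Delta(f,m,B)\in\CaA(f,m,B)$. The only point requiring care is the covering half, where one must confirm that $B(S)$ really satisfies all the defining constraints of $B(f,m)$ rather than merely being a subset of $\Upsilon$; the work there is entirely bookkeeping, leaning on condition \emph{C1} of Proposition \ref{prop:Upsilon} for the intersection property and on $\langle m\cup(\Upsilon\setminus B(S))\rangle\subseteq S$ for the two remaining constraints.
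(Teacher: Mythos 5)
Your proof is correct. The paper actually omits a proof of this proposition entirely, merely noting that it generalizes \cite[Proposition 48]{rosales2023numerical}; your argument (the sets $\CaA(f,m,B)$ are the fibers of $S\mapsto B(S)=\Upsilon\setminus S$, with the covering half reduced to checking $B(S)\in B(f,m)$ via condition C1 of Proposition \ref{prop:Upsilon} and the containment $\langle m\cup(\Upsilon\setminus B(S))\rangle\subseteq S$) is exactly the expected one and fills that gap correctly.
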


Now, using a tree, we focus on introducing an algorithm to compute the set $\CaA(f,m,B)$ as in the previous sections. The following lemma gives a root of this tree.

\begin{lemma}
Given $B\in B(f,m)$, $\CaA(f,m,B)$ is a ratio-covariety with minimum element $\Delta(f,m,B)$.
\end{lemma}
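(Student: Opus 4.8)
The plan is to verify the three defining conditions of a ratio-covariety for the family $\CaA(f,m,B)$, where the candidate minimum element is $\Delta(f,m,B)=\langle m \rangle \cup (\Upsilon\setminus B)\cup \Delta(f)$. The first task is to establish that $\Delta(f,m,B)$ is indeed the minimum with respect to inclusion. Since every $S\in \CaA(f,m,B)$ satisfies $B(S)=B$, by definition $S$ avoids every element of $B$ while containing all of $\Upsilon\setminus B$; combined with $\m(S)=m$ (so $\langle m\rangle \subseteq S$) and $\Fb(S)=f$ (so $\Delta(f)\subseteq S$), we get $\Delta(f,m,B)\subseteq S$. The fact that $\Delta(f,m,B)$ itself lies in $\CaA(f,m,B)$ was already observed in the text preceding the lemma, so the minimum is well-defined.

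Next I would check closure under intersection. Given $S,T\in \CaA(f,m,B)$, I will use that $\CaA(f)$ is a covariety (Proposition \ref{A(f)covariedad}), which already guarantees $S\cap T$ is an $\CaA$-semigroup with the correct Frobenius element $f$. It then remains to confirm that $S\cap T$ stays inside $\CaA(f,m,B)$, i.e.\ that $\m(S\cap T)=m$ and $B(S\cap T)=B$. Because both $S$ and $T$ contain $\langle m\rangle$ and $\Delta(f,m,B)\subseteq S\cap T$, the multiplicity is preserved, and since $B(S)=B(T)=B$ means both omit exactly the $B$-elements of $\Upsilon$ and retain $\Upsilon\setminus B$, the intersection does the same; hence $B(S\cap T)=B$.

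For the third condition I would take $S\in \CaA(f,m,B)\setminus\{\Delta(f,m,B)\}$ and show $S\setminus\{\r(S)\}\in \CaA(f,m,B)$. The ratio $\r(S)$ is a minimal generator, so $S\setminus\{\r(S)\}$ is again a $\CaC$-semigroup, and removing a minimal generator larger than $m$ does not disturb the multiplicity nor introduce consecutive small elements, so it remains an $\CaA$-semigroup in $\CaA(f,m)$. The key point to verify is that the value $B(S\setminus\{\r(S)\})$ is still exactly $B$: removing $\r(S)$ deletes one element from $S$, so I must argue that $\r(S)\notin \Upsilon$, i.e.\ that the removed element is not one of the elements of $\Upsilon\setminus B$ that membership in $\CaA(f,m,B)$ forces $S$ to contain. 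This is where the main obstacle lies, and I expect it to follow from the definition of $B(f,m)$: the condition $\langle m\cup(\Upsilon\setminus B)\rangle\cap B=\emptyset$ together with the ratio-covariety structure ensures the removable generators sit outside $\Upsilon$, so that $\Upsilon\setminus B\subseteq S\setminus\{\r(S)\}$ persists and $B(S\setminus\{\r(S)\})=B$. Once these three properties are checked, $\CaA(f,m,B)$ is a ratio-covariety with the asserted minimum, completing the proof.
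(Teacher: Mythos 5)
The paper states this lemma without a proof, so there is nothing official to compare against; judging your proposal on its own terms, the overall skeleton is right and the first two ratio-covariety conditions are handled correctly: $\Delta(f,m,B)\subseteq S$ for every $S\in\CaA(f,m,B)$ because $\m(S)=m$, $\Fb(S)=f$ and $B(S)=B$ force $\langle m\rangle$, $\Delta(f)$ and $\Upsilon\setminus B$ into $S$, and the intersection argument via Proposition \ref{A(f)covariedad} together with $B(S\cap T)=B(S)\cup B(T)=B$ is sound.

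The genuine gap is in condition (iii). You correctly isolate the crux, namely that $\r(S)\notin\Upsilon$, but you do not prove it: you only say you ``expect'' it to follow from the condition $\langle m\cup(\Upsilon\setminus B)\rangle\cap B=\emptyset$ in the definition of $B(f,m)$. That condition is what makes $\Delta(f,m,B)$ a semigroup; it says nothing about elements of $S$ outside $\Delta(f,m,B)$ and cannot by itself locate $\r(S)$. The claim is true, but for a different reason. Since $S\neq\Delta(f,m,B)$ and $\Delta(f,m,B)\subseteq S$, pick $y\in S\setminus\Delta(f,m,B)$. Then $y\prec f$, $y\notin\langle m\rangle$, and $y\notin\Upsilon$ (if $y\in\Upsilon$ then $y\in\Upsilon\setminus S=B(S)=B$, contradicting $y\in S$), hence $\pi(y)\leq\pi(f-m-\bar e)$. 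Now $\r(S)=\min_\preceq\big(S\setminus\langle m\rangle\big)\preceq y$, and since $\pi$ is the first row of the order matrix, $\r(S)\preceq y$ forces $\pi(\r(S))\leq\pi(y)\leq\pi(f-m-\bar e)$, so $\r(S)\notin\Upsilon$; the same chain gives $\r(S)\prec f$, which you also need (and do not mention) to guarantee that $\Fb\big(S\setminus\{\r(S)\}\big)$ is still $f$. With this argument inserted, $\Upsilon\setminus B$ survives the deletion, $B\big(S\setminus\{\r(S)\}\big)=B$, and the proof closes.
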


We denote by $G(\CaA(f,m,B))$ the graph with vertex set $\CaA(f,m,B)$, and such that $(S,T)\in \CaA(f,m,B)^2$ is an edge if $T=S\setminus \{\r(S)\}$. From the previous lemma, we have that this graph is a tree. This fact allows us to introduce the key to determine the announced algorithm.

\begin{proposition}
The graph $G(\CaA(f,m,B))$ is a tree with root $\CaA(f,m,B)$.
\end{proposition}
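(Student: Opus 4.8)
The plan is to reproduce, mutatis mutandis, the argument used for Proposition~\ref{prop:grafoA_f} and Proposition~\ref{prop:A(f,m)tree}, replacing the multiplicity by the ratio throughout; here the root should be read as $\Delta(f,m,B)$, the minimum of the ratio-covariety supplied by the preceding lemma. The key observation is that this lemma already provides exactly what the argument needs: $\CaA(f,m,B)$ has a minimum element $\Delta(f,m,B)$ with respect to inclusion, and, by property~(iii) of a ratio-covariety, for every $S\in\CaA(f,m,B)\setminus\{\Delta(f,m,B)\}$ we have $S\setminus\{\r(S)\}\in\CaA(f,m,B)$. Thus removing the ratio is a legitimate move that keeps us inside the vertex set and, by the definition of the graph, produces the unique outgoing edge from $S$.

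First I would fix $S\in\CaA(f,m,B)$ and define the sequence $\{S_i\}_{i\in\mathbb{N}}$ by $S_0=S$ and, for each $i$, $S_{i+1}=S_i\setminus\{\r(S_i)\}$ when $S_i\neq\Delta(f,m,B)$ and $S_{i+1}=\Delta(f,m,B)$ otherwise. By the ratio-covariety property each $S_i$ lies in $\CaA(f,m,B)$, so the construction never leaves the graph, and consecutive distinct terms yield the edges $(S_i,S_{i+1})$. To see that the sequence is stationary, I would argue that each step strictly enlarges the gap set: since $\r(S_i)\in S_i$, we have $\CaH(S_{i+1})=\CaH(S_i)\cup\{\r(S_i)\}$, one element larger than $\CaH(S_i)$. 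Because every semigroup in $\CaA(f,m,B)$ has Frobenius element $f$, its gaps are contained in the finite set $\{x\in\CaC\mid x\preceq f\}$, so the genus is bounded; hence the strictly increasing genera must stabilise, which can only happen once $S_i=\Delta(f,m,B)$. This produces a finite path of edges joining $S$ to $\Delta(f,m,B)$.

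Uniqueness of this path is what upgrades the connected graph to a tree. Here I would invoke that every $\CaC$-semigroup has a single, well-defined ratio, so the outgoing edge from any vertex $S\neq\Delta(f,m,B)$ is uniquely determined by $T=S\setminus\{\r(S)\}$; consequently no vertex can reach the root along two different edge sequences, and no cycles occur. Combining the existence of a path from each vertex to $\Delta(f,m,B)$ with this uniqueness gives that $G(\CaA(f,m,B))$ is a tree rooted at $\Delta(f,m,B)$.

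The main obstacle I anticipate is the stationarity of the sequence rather than its uniqueness: one must be sure that iterating the removal of ratios actually terminates at the prescribed minimum and does not silently change the Frobenius element or membership in $\CaA(f,m,B)$. This is precisely where the ratio-covariety structure from the preceding lemma does the heavy lifting, guaranteeing simultaneously that each $S_i$ remains in $\CaA(f,m,B)$ (so that $f$ is preserved and the genus bound applies) and that the descent halts at $\Delta(f,m,B)$.
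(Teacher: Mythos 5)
Your proposal is correct and follows essentially the same route the paper takes: it invokes the ratio-covariety structure from the preceding lemma and repeats the descent-by-ratio argument of Propositions~\ref{prop:grafoA_f} and~\ref{prop:A(f,m)tree}, with the stationarity and uniqueness points filled in exactly as the paper intends (the paper leaves this proof implicit, remarking only that the tree property follows from the previous lemma). You also correctly read the root as $\Delta(f,m,B)$ rather than the set $\CaA(f,m,B)$ itself, which is evidently a typo in the statement.
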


\begin{proposition}
Let $B\in B(f,m)$ and $T\in \CaA(f,m,B)$. Then, the set of child of $T$ in $G(\CaA(f,m,B))$ is
$$\big\{T\cup \{x\}\mid x\in \SG(T)\setminus \Upsilon,\, m\prec x \prec \r(T), \text{ and } x\pm e\notin \N(T),\forall e\in\Lambda \big\}.$$
\end{proposition}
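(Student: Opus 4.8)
The plan is to prove a double inclusion characterizing the children of $T$ in the tree $G(\CaA(f,m,B))$, following the same template as Proposition \ref{prop:child_G(A(f,m))} but tracking the extra constraint $B(S)=B$ that pins down membership in $\CaA(f,m,B)$ rather than merely $\CaA(f,m)$. Throughout I would use that, by the ratio-covariety structure established in the preceding lemma, $S$ is a child of $T$ if and only if $S = T\cup\{x\}$ with $x=\r(S)$, so the task reduces to identifying exactly which single-element extensions $T\cup\{x\}$ remain in $\CaA(f,m,B)$ and have $x$ as their ratio.

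First I would take $S$ a child of $T$, so $S=T\cup\{x\}$ with $x=\r(S)$ and $S\in\CaA(f,m,B)$. Since $x=\r(S)$ is a newly added element lying strictly between $\m(T)=m$ and $\r(T)$, I get $m\prec x\prec \r(T)$; because $S$ is a $\CaC$-semigroup obtained by adjoining one element to $T$, standard arguments (as in the cited \cite{moreno2024covariety} results) force $x\in\SG(T)$. The $\CaA$-semigroup condition on $S$, applied via the no-consecutive-small-elements property to the newly added element $x$, gives $x\pm e\notin\N(T)$ for all $e\in\Lambda$. The crucial new point is $x\notin\Upsilon$: since $B(S)=B(T)=B$ and $B\subseteq\Upsilon$ is fixed, adding $x$ must not change which elements of $\Upsilon$ are missing; as $x$ is an element of $S$ that was a gap of $T$, if $x$ lay in $\Upsilon$ it would remove $x$ from $B(T)$, yielding $B(S)\subsetneq B(T)$, a contradiction. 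Hence $x\in\SG(T)\setminus\Upsilon$.

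Conversely, given $x\in\SG(T)\setminus\Upsilon$ with $m\prec x\prec\r(T)$ and $x\pm e\notin\N(T)$ for all $e\in\Lambda$, I would set $S=T\cup\{x\}$. Since $x\in\SG(T)$, $S$ is a $\CaC$-semigroup with $\Fb(S)=f$ and $\m(S)=m$ (as $x\succ m$); since $m\prec x\prec\r(T)$, we have $\r(S)=x$. The condition $x\pm e\notin\N(T)$ together with $T\in\CaA(f,m)$ ensures $\N(S)=\N(T)\cup\{x\}$ has no consecutive small elements, so $S\in\CaA(f,m)$. Finally $x\notin\Upsilon$ guarantees that adjoining $x$ leaves $\Upsilon\setminus S$ unchanged, so $B(S)=B(T)=B$ and thus $S\in\CaA(f,m,B)$; therefore $S$ is a child of $T$.

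The main obstacle I anticipate is the careful handling of the invariant $B(S)=B$: I must verify in both directions that adding an element outside $\Upsilon$ neither creates nor destroys any deficiency in $\Upsilon$, and confront the subtle possibility that adjoining $x$ could indirectly affect $B(S)$ through sums $x+t$ with $t\in T$ landing in $\Upsilon$. Here I would rely on the fact that $\Upsilon\subseteq\{z\mid z\prec f\}$ consists of elements of bounded $\pi$-value near the top of the small-element range, so that $x+t$ for $t\in S\setminus\{0\}$ either exceeds $f$ or was already in $T$, preventing any new $\Upsilon$-element from entering $S$; this is exactly the structural role that the definition of $\Upsilon$ via $\pi(f-m-\bar e)$ plays, and it is where the hypothesis $f\succ 2m$ from Proposition \ref{prop:Upsilon} is implicitly used.
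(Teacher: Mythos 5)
Your proof is correct and follows essentially the same route as the paper, whose proof simply states that it is analogous to that of Proposition \ref{prop:child_G(A(f,m))}; your explicit tracking of the invariant $B(S)=B$ via the condition $x\notin\Upsilon$ is precisely the extra ingredient that the word ``analogously'' hides. The concern in your final paragraph is unnecessary: since $x\in\SG(T)$ is a special gap, $T\cup\{x\}$ is already closed under addition, so adjoining $x$ changes the underlying set by exactly that one element and $B(S)=B(T)$ follows immediately from $x\notin\Upsilon$, with no need to analyze sums $x+t$ or to invoke $f\succ 2m$.
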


\begin{proof}
Analogously to the proof of Proposition \ref{prop:child_G(A(f,m))}.
\end{proof}

\begin{algorithm}[H]
\caption{Computing the set $\CaA(f,m,B)$ with $f\succ 2m$.}\label{ComputeCaA(f,m,B)}
\KwIn{A non-negative integer cone $\CaC$, $f\in\CaC\setminus\{0\}$, $m\in \M(f)$, and $B\in B(f,m)$.}
\KwOut{The set  $\CaA(f,m,B)$.}
$A \leftarrow \{\Delta(f,m,B)\}$\;
$X\leftarrow A$\;

\While {$A\ne\emptyset$}{
    $Y \leftarrow \emptyset$\;
    $C\leftarrow A$\;
    \While{$C\neq \emptyset$ }
    {$T \leftarrow \text{First}(C)$\;
    $Z \leftarrow \{x\in SG(T)\setminus(\Upsilon\cup \{f\}) \mid  \m(T)\prec x\prec \r(T)\text{ and }x\pm e\notin \N(T), \forall e\in \Lambda\}$\;\label{step}
    $Y \leftarrow Y\cup\{T\cup\{x\}\mid x\in Z\}$\;
    $C \leftarrow C\setminus \{T\}$\;
    }
    $A \leftarrow Y$\;
    $X \leftarrow X\cup Y$\;
    }
\Return{$X$}
\end{algorithm}

We give an illustrative example of Algorithm \ref{ComputeCaA(f,m,B)} that is easy to follow.

\begin{example}
Consider $\CaC$, $f$, $m$, $B$ and the fixed monomial order  (degree lexicographical order) given in Example \ref{ex:Delta(f,m,B)}. Note that there is only a $x=(5,1)\in\SG(\Delta(f,m,B))$ satisfying that $m\prec x\prec \r(\Delta(f,m,B))=(7,2)$, and $x\pm e\notin \N(\Delta(f,m,B))$, $\forall e\in \Lambda$. Hence, $\CaA(f,m,B)$ has two elements: $\Delta(f,m,B)$, and the $\CaC$-semigroup showed in Figure \ref{fig:A(f,m,B)}.
\begin{figure}[h]
    \centering
    \includegraphics[scale=.4]{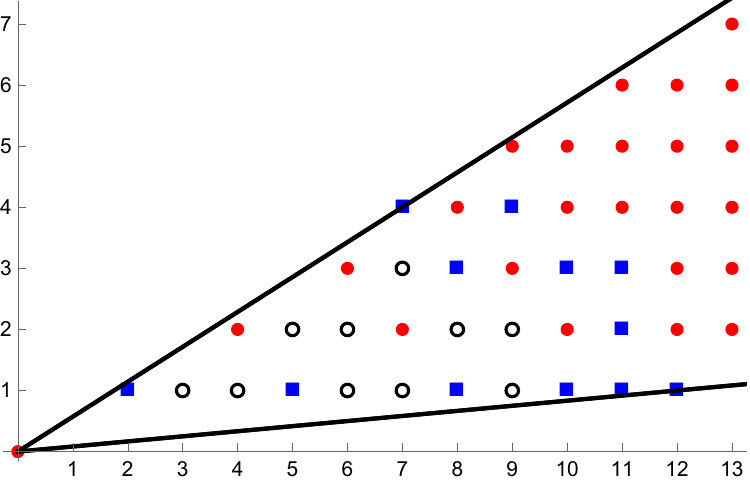}
    \caption{An element in $\CaA(f,m,B)$.}
    \label{fig:A(f,m,B)}
\end{figure}
\end{example}

This section concludes with some considerations on the case of numerical $\CaA$-semigroups.

\begin{remark}
In case that the ambient space is $\mathbb N$, that is, we consider only numerical semigroups, the computation of some sets introduced in this section can be improved. Let $f\in \mathbb N$, and $m\in \M(f)$ with $f>2m$. Hence, the set $\Upsilon$ can be redefined as $\Upsilon=\{f-1,\ldots ,f-m+1\}$, and $B(f,m)$ as $\big\{B\subseteq \Upsilon\setminus \langle m \rangle \mid \{x,x+1\}\cap B\neq \emptyset \text{ when }\{x,x+1\}\subseteq \Upsilon\big\}$. With these new definitions, the previous results are also true, and the step \ref{step} in Algorithm \ref{ComputeCaA(f,m,B)} improves its computational behaviour.
\end{remark}

\section{Numerical $\CaA $-semigroups with maximal embedding dimension}\label{Sec5}

Following the last comment in the previous section, we now focus on numerical semigroups. A numerical semigroup has maximal embedding dimension (or is a $\MED$-semigroup) if $\e(S)=\m(S)$. Fix $f$ and $m$ two different natural numbers such that $f$ is not a multiple of $m$, we denote by $\MED(f,m)$ the set of all numerical semigroups with Frobenius number $f$ and multiplicity $m$.
According to \cite{rosales2023numerical}, we define an $\AMED$-semigroup as a numerical semigroup that is both an $\CaA$-semigroup and a $\MED$-semigroup, and the set $\AMED(f,m)$ corresponds with the set of all  $\AMED$-semigroups with Frobenius number $f$ and multiplicity $m$.

This section aims to provide an algorithm to compute the set $\AMED(f,m)$. In addition, we show two families of $\AMED$-semigroups: Arf and saturated semigroups. We begin by recalling a structural result on the set $\MED(f,m)$,

\begin{proposition}\label{prop:MED}
\cite[Proposition 2.5]{moreno2024ratioMED}
The set $\MED(f, m)$ is a ratio-covariety and $\min_\subseteq\big(\MED(f, m)\big)=\Delta(f,m)$.
\end{proposition}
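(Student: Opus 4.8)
The plan is to verify directly the three defining properties of a ratio-covariety for $\MED(f,m)$, relying throughout on the classical characterization that a numerical semigroup $S$ with multiplicity $m$ has maximal embedding dimension if and only if $x+y-m\in S$ for all $x,y\in S\setminus\{0\}$ (see \cite{libroRosales}). I would first settle the minimum. Any numerical semigroup $S$ with $\m(S)=m$ and $\Fb(S)=f$ satisfies $\langle m\rangle\subseteq S$ and $\{0\}\cup\{x\in\mathbb N\mid x>f\}\subseteq S$, hence $\Delta(f,m)=\langle m\rangle\cup\Delta(f)\subseteq S$; so it suffices to check that $\Delta(f,m)$ itself lies in $\MED(f,m)$. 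Computing $\Ap(\Delta(f,m),m)=\{0\}\cup\big(\{f+1,\ldots,f+m\}\setminus\langle m\rangle\big)$ and noting that none of these nonzero elements decomposes as a sum of two nonzero elements of $\Delta(f,m)$ (any such sum would be a multiple of $m$, or would strictly exceed $f+m$), one gets $\msg(\Delta(f,m))=\{m\}\cup\big(\Ap(\Delta(f,m),m)\setminus\{0\}\big)$ and thus $\e(\Delta(f,m))=m$. Therefore $\Delta(f,m)\in\MED(f,m)$, and it is the $\subseteq$-minimum.

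For closure under intersection, let $S,T\in\MED(f,m)$. Then $S\cap T$ is a numerical semigroup (its complement is the union of two finite sets), and since both $S$ and $T$ contain $m$ and $\{f+1,\longrightarrow\}$ while excluding $f$ and everything below $m$, we get $\m(S\cap T)=m$ and $\Fb(S\cap T)=f$. For the MED property I would invoke the characterization above: for $x,y\in(S\cap T)\setminus\{0\}$ we have $x+y-m\in S$ and $x+y-m\in T$ (each semigroup being MED), hence $x+y-m\in S\cap T$. Thus $S\cap T\in\MED(f,m)$.

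The third property is the delicate one, and I expect it to be \textbf{the main obstacle}. Let $S\in\MED(f,m)\setminus\{\Delta(f,m)\}$ and $r=\r(S)=\min(S\setminus\langle m\rangle)$. Since removing a minimal generator from a numerical semigroup again yields a numerical semigroup and $r\in\msg(S)$, the set $T=S\setminus\{r\}$ is a numerical semigroup; because $\Delta(f,m)\subsetneq S$, the semigroup $S$ contains an element of $S\setminus\langle m\rangle$ that is at most $f$, so $r<f$, whence $\Fb(T)=f$, and $\m(T)=m$ as $r\ne m$. It remains to prove $T$ is MED. Given $x,y\in T\setminus\{0\}$, the MED property of $S$ gives $x+y-m\in S$, and the key point is to show $x+y-m\ne r$, which then places $x+y-m$ in $T$. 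If instead $x+y-m=r$, then $x+y=r+m$ with $m\le x,y$ and $x,y\ne r$, forcing $m\le x,y<r$; but every element of $S$ below $r=\min(S\setminus\langle m\rangle)$ is a multiple of $m$, so $x+y\in\langle m\rangle$ and hence $r=x+y-m\in\langle m\rangle$, contradicting $r\notin\langle m\rangle$. Therefore $x+y-m\in T$, so $T\in\MED(f,m)$, completing the proof that $\MED(f,m)$ is a ratio-covariety with minimum $\Delta(f,m)$.
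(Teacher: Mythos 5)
Your proof is correct, but there is nothing internal to compare it with: the paper states this proposition as a quotation of \cite[Proposition 2.5]{moreno2024ratioMED} and gives no proof of its own. What you have written is a complete, self-contained verification along the standard lines, and it is essentially the argument one finds in the cited source: identify the minimum by computing $\Ap(\Delta(f,m),m)=\{0\}\cup\bigl(\{f+1,\ldots,f+m\}\setminus\langle m\rangle\bigr)$ and checking that no nonzero element of it splits as a sum of two nonzero elements of $\Delta(f,m)$; then handle both the intersection axiom and the removal of the ratio via the characterization that a numerical semigroup $S$ with multiplicity $m$ is a $\MED$-semigroup if and only if $x+y-m\in S$ for all $x,y\in S\setminus\{0\}$ (this is in \cite{libroRosales}, and you should cite the precise result rather than the book globally). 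Your treatment of the third axiom is the one genuinely delicate point, and your resolution is right: if $x+y-m=\r(S)$ with $x,y\in T\setminus\{0\}$ then $x,y<\r(S)$, so both are multiples of $m$ by minimality of the ratio, forcing $\r(S)\in\langle m\rangle$, a contradiction. Two small things you use without justification and could state explicitly: that $\r(S)=\min(S\setminus\langle m\rangle)$ is in fact a minimal generator of $S$ (so that $S\setminus\{\r(S)\}$ is a semigroup — this follows by the same ``smallest non-multiple of $m$'' argument you use at the end), and that the two descriptions of the ratio used in the paper, $\min_\preceq(S\setminus\langle\m(S)\rangle)$ and $\min_\preceq(\msg(S)\setminus\{\m(S)\})$, coincide. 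Neither affects correctness; your argument buys the reader a proof the paper otherwise outsources.
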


To study intersections of ratio-covarieties, we will use the following general result.

\begin{proposition}\label{prop:AMEDux}
\cite[Lemma 17]{moreno2024ratio}
Let $\{\mathcal{R}_i\}_{i\in[l]}$ be a family of $l$  ratio-covarieties such that $\min_\subseteq(\mathcal{R}_i)=\Delta$ for all $i\in [l]$. Then,  $\cap_{i\in[l]}\mathcal{R}_i$ is a ratio-covariety and $\Delta$ is its minimum with respect to the inclusion.
\end{proposition}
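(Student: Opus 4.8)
The plan is to verify the three defining axioms of a ratio-covariety directly for $\mathcal{R}=\cap_{i\in[l]}\mathcal{R}_i$, exploiting the crucial hypothesis that all the $\mathcal{R}_i$ share the \emph{same} minimum element $\Delta$. First I would note that $\mathcal{R}$ is non-empty precisely because $\Delta\in\mathcal{R}_i$ for every $i\in[l]$, so $\Delta\in\mathcal{R}$. The first task is then to show $\Delta=\min_\subseteq\mathcal{R}$: for any $S\in\mathcal{R}$ we have $S\in\mathcal{R}_i$ for each $i$, and since $\Delta=\min_\subseteq(\mathcal{R}_i)$ we get $\Delta\subseteq S$; as $\Delta$ itself lies in $\mathcal{R}$, it is the minimum. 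This also establishes existence of a minimum, which is axiom (i).

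For axiom (ii), closure under intersection, I would take $S,T\in\mathcal{R}$. Then $S,T\in\mathcal{R}_i$ for every $i\in[l]$, and since each $\mathcal{R}_i$ is a ratio-covariety (hence closed under intersection), $S\cap T\in\mathcal{R}_i$ for every $i$. Intersecting over all $i$ yields $S\cap T\in\cap_{i\in[l]}\mathcal{R}_i=\mathcal{R}$, as desired. This step is essentially immediate once one unwinds the definitions.

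For axiom (iii), I would take $S\in\mathcal{R}\setminus\{\Delta\}$. By the minimality already established, $S\neq\Delta=\min_\subseteq(\mathcal{R}_i)$ for each $i$, so $S\in\mathcal{R}_i\setminus\{\min_\subseteq(\mathcal{R}_i)\}$ for every $i\in[l]$. Applying axiom (iii) to each individual ratio-covariety $\mathcal{R}_i$ gives $S\setminus\{\r(S)\}\in\mathcal{R}_i$ for every $i$, and hence $S\setminus\{\r(S)\}\in\mathcal{R}$. It is here that the shared-minimum hypothesis is genuinely needed: without it, $S$ might coincide with the minimum of some $\mathcal{R}_j$ while differing from that of $\mathcal{R}$, so the hypothesis of axiom (iii) for $\mathcal{R}_j$ could fail and the argument would break down. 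This is the only subtle point, and I expect it to be the main (indeed only) obstacle worth flagging; everything else is a routine verification that the set-theoretic intersection respects each axiom componentwise. Collecting the three verifications concludes that $\mathcal{R}$ is a ratio-covariety with minimum $\Delta$.
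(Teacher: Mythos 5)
Your verification is correct: all three axioms of a ratio-covariety pass to the intersection, and you rightly isolate the shared-minimum hypothesis as the one point where the argument genuinely needs it (so that $S\neq\Delta$ forces $S\neq\min_\subseteq(\mathcal{R}_i)$ for \emph{every} $i$, making axiom (iii) applicable componentwise). The paper itself gives no proof — it quotes the result from the cited reference — but your direct axiom-by-axiom check is the standard argument and is complete.
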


We obtain the following result as a consequence of Propositions \ref{prop:MED} and \ref{prop:AMEDux}.

\begin{corollary}\label{coro:AMED}
The set $\AMED(f,m)$ forms a ratio-covariety whose minimal element with respect to inclusion is $\Delta(f,m)$.
\end{corollary}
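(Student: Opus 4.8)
The plan is to obtain the statement as a direct application of the two cited results. First I would observe that $\AMED(f,m)$ is precisely the intersection of two sets of numerical semigroups: the set $\MED(f,m)$ of $\MED$-semigroups with Frobenius number $f$ and multiplicity $m$, and the set $\CaA(f,m)$ of $\CaA$-semigroups with the same Frobenius number and multiplicity. Indeed, by definition an $\AMED$-semigroup is one that is simultaneously an $\MED$-semigroup and an $\CaA$-semigroup, so writing $\AMED(f,m) = \MED(f,m)\cap \CaA(f,m)$ is immediate from the definitions.

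The key structural inputs are already available. By Proposition~\ref{prop:MED}, the set $\MED(f,m)$ is a ratio-covariety with minimum $\Delta(f,m)$. By Proposition~\ref{prop:A(f,m)ratio-covariety}, the set $\CaA(f,m)$ is likewise a ratio-covariety whose minimum with respect to inclusion is $\Delta(f,m)$. Thus I would have a family $\{\mathcal{R}_1,\mathcal{R}_2\}$ of two ratio-covarieties, namely $\mathcal{R}_1=\MED(f,m)$ and $\mathcal{R}_2=\CaA(f,m)$, both sharing the same minimal element $\Delta=\Delta(f,m)$. This is exactly the hypothesis of Proposition~\ref{prop:AMEDux} with $l=2$.

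Applying Proposition~\ref{prop:AMEDux} then yields that the intersection $\mathcal{R}_1\cap\mathcal{R}_2 = \AMED(f,m)$ is a ratio-covariety and that $\Delta(f,m)$ is its minimum with respect to inclusion, which is precisely the assertion of the corollary. The only point requiring a moment of care is verifying that the two minima genuinely coincide: the minimum of $\MED(f,m)$ given by Proposition~\ref{prop:MED} is $\Delta(f,m)$, and since $m\in\M(f)$ the minimum of $\CaA(f,m)$ given by Proposition~\ref{prop:A(f,m)ratio-covariety} is also $\Delta(f,m)$, so the common-minimum hypothesis of Proposition~\ref{prop:AMEDux} is satisfied. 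I do not anticipate a genuine obstacle here; the proof is essentially a bookkeeping argument matching the hypotheses of the cited lemma. The one subtlety worth noting explicitly is that $\Delta(f,m)$ must itself belong to both families so that the intersection is nonempty and the minimum is well defined, which holds because $\Delta(f,m)\in\CaA(f,m)$ and $\Delta(f,m)\in\MED(f,m)$ by the respective minimality statements.
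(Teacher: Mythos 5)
Your proof is correct and follows the same route the paper intends: writing $\AMED(f,m)=\MED(f,m)\cap\CaA(f,m)$ and applying Proposition~\ref{prop:AMEDux} to the two ratio-covarieties given by Propositions~\ref{prop:MED} and~\ref{prop:A(f,m)ratio-covariety}, which share the minimum $\Delta(f,m)$. The paper states the corollary as an immediate consequence of exactly these ingredients, so your argument just makes the implicit bookkeeping explicit.
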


Consider the graph $G\big(\AMED(f,m)\big)$ whose vertex set is $\AMED(f,m)$, and a pair $(S, T) \in \AMED^2 (f,m)$ is  an edge if and only if $T = S \setminus \{r(S)\}$. We now present the results that will allow us to construct the graph $G\big(\AMED(f,m)\big)$.

\begin{theorem}\label{thr:AMED}
The graph $G\big(\AMED(f,m)\big)$ is a tree with root  $\Delta(f,m)$. The set of child of $T$ in $G\big(\AMED(f,m)\big)$ is
\[
\big\{T\cup \{x\} \in \operatorname{MED}(f, m) \mid x\in \SG(T),\, \m(T)<x<\r(T),\; \{x+1,x-1\}\subset\CaH(T)\big\}.
\]
\end{theorem}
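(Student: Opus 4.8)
The plan is to prove the two assertions of Theorem \ref{thr:AMED} separately, but both by leveraging the machinery already established in the previous sections. For the tree structure, I would invoke Corollary \ref{coro:AMED}, which tells us that $\AMED(f,m)$ is a ratio-covariety with minimum $\Delta(f,m)$. Then, exactly as in the proof of Proposition \ref{prop:A(f,m)tree} (which reduces to Proposition \ref{prop:grafoA_f} with the multiplicity replaced by the ratio), the general theory of ratio-covarieties—specifically \cite[Proposition 3]{moreno2024ratio}—guarantees that the associated graph is a tree rooted at the minimum element. The key point is that the sequence $S_0=S$, $S_{i+1}=S_i\setminus\{\r(S_i)\}$ terminates at $\Delta(f,m)$ and is unique, since each $\CaC$-semigroup has a unique ratio, so no cycles arise.

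For the characterization of children, I would argue by double inclusion, following the template of Proposition \ref{prop:child_G(A(f,m))}. First, suppose $S$ is a child of $T$ in $G(\AMED(f,m))$. Then $S=T\cup\{x\}$ with $x=\r(S)$, forcing $\m(T)<x<\r(T)$ and $x\in\SG(T)$ (so that $S$ is again a $\CaC$-semigroup). Since $S\in\MED(f,m)$ by definition of the graph, the membership condition $T\cup\{x\}\in\MED(f,m)$ is automatic. The $\CaA$-semigroup condition must be translated: because $S$ and $T$ both lie in $\AMED(f,m)\subseteq\CaA(f)$, and $\N(S)=\N(T)\cup\{x\}$, neither $x+1$ nor $x-1$ can be a small element consecutive to $x$; combined with $x\in\SG(T)\subseteq\CaH(T)$, this yields $\{x-1,x+1\}\subset\CaH(T)$. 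Conversely, if $x$ satisfies all the listed conditions, then $T\cup\{x\}\in\MED(f,m)$ by hypothesis, $x=\r(T\cup\{x\})$ since $\m(T)<x<\r(T)$, and the condition $\{x-1,x+1\}\subset\CaH(T)$ together with $T$ being an $\CaA$-semigroup shows that adding $x$ creates no consecutive pair of small elements, hence $T\cup\{x\}\in\CaA(f)$ and thus lies in $\AMED(f,m)$.

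The main obstacle I anticipate is reconciling the $\CaA$-semigroup condition, stated in terms of $\{x-1,x+1\}\subset\CaH(T)$, with the analogous conditions in Propositions \ref{prop:child_G(A(f,m))} and the $\AMED$ setting, where the relevant constraint was phrased as $x\pm e\notin\N(T)$. In the numerical case $p=1$ the canonical basis is $\Lambda=\{1\}$, so $x\pm e=x\pm 1$; the subtlety is that one must verify $x-1$ and $x+1$ are genuine gaps of $T$ (not merely non-small elements), which is where the $\MED$ structure and the positions of $\m(T)$ and $\r(T)$ must be used carefully. Concretely, since $x<\r(T)<f$ and $x>\m(T)$, the candidate neighbours $x\pm 1$ lie below the Frobenius number, so requiring them to avoid $\N(T)$ is equivalent to requiring them to be gaps; I would make this equivalence explicit and check that the $\MED$ hypothesis does not secretly force one of $x\pm 1$ into $T$, which would break the argument.
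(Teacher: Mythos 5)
Your proposal is correct and follows essentially the same route as the paper, whose proof is simply ``Immediate from Corollary~\ref{coro:AMED} and \cite[Propositions 3 and 4]{moreno2024ratio}''; you unpack exactly those ingredients (the ratio-covariety structure for the tree, and the child characterization à la Proposition~\ref{prop:child_G(A(f,m))}). Your handling of the one genuine subtlety—that for $\m(T)<x<\r(T)$ with $x\in\SG(T)$ one has $x\pm 1\prec f$ or $x+1=f$, so ``$x\pm 1\notin\N(T)$'' is equivalent to ``$\{x-1,x+1\}\subset\CaH(T)$''—is also correct.
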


\begin{proof}
Immediate from Corollary~\ref{coro:AMED} and \cite[Propositions 3 and 4]{moreno2024ratio}.
\end{proof}

In order to determine whether $T \cup \{x\}$ belongs to $\operatorname{MED}(f, m)$.  We recall that a numerical semigroup $S$ is MED  if and only if $\msg(S)=\big(\Ap(S, \m(S))\setminus\{0\}\big)\cup \{\m(S)\}$ (see \cite[Proposition 3.1]{libroRosales}). Consequently, applying Proposition~\ref{prop:Ap(Scupx)} to numerical semigroups, we obtain
\[
\Ap(S\cup \{x\}, \m(S)\big)=\Big(\Ap(S, \m(S)\big)\setminus\{x+\m(S)\}\Big)\cup\{x\},
\]
which allows us to test whether $T \cup \{x\}$ preserves the maximal embedding dimension property.

\begin{algorithm}[H]
\caption{Computing $\AMED $-semigroup with a given Frobenius number and multiplicity.}\label{ComputeAMED(f,m)}
\KwIn{Two non-zero natural numbers $f$ and $m$.}
\KwOut{The set  $\AMED(f,m)$.}
\If{$f\in \langle m\rangle$ }{
\Return{$\emptyset$}
}
$A \leftarrow \{\Delta(f,m)\}$\;
$X\leftarrow A$\;
\While {$A\ne\emptyset$}{
    $Y \leftarrow \emptyset$\;
    $C\leftarrow A$\;
    \While{$C\neq \emptyset$ }
    {$T \leftarrow \text{First}(C)$\;
    $Z \leftarrow \{ x\in \SG(T)\mid \m(T)<x<\r(T),\; \{x+1,x-1\}\subset\CaH(T),\, T\cup\{x\}\in \operatorname{MED}(f,m)\}$\;
    $Y \leftarrow Y\cup\{T\cup\{x\}\mid x\in Z\}$\;
    $C \leftarrow C\setminus \{T\}$\;
    }
    $A \leftarrow Y$\;
    $X \leftarrow X\cup Y$\;
    }
\Return{$X$}
\end{algorithm}

We present a straightforward example showing how Algorithm \ref{ComputeAMED(f,m)} works.

\begin{example}
To compute the set $\AMED(13,4)$, note that $\Delta(13,4)=\{0,4,8,12,14, \rightarrow\}$. By applying Algorithm \ref{ComputeAMED(f,m)}, we obtain that $\Delta(13,4)$ has only one child $S_1=\Delta(13,4)\cup\{10\}$. For $S_1$, there is a single child $S_2=\Delta(13,4)\cup\{6,10\}$ which has no further descendants.
\end{example}

To conclude this section, we present different families of $\MED$-semigroups. A numerical semigroup $S$ is an Arf semigroup if, for any $x, y, z \in S$ with $z \geq y \geq x$, it follows that $x + y - z \in S$. We denote by $\operatorname{Arf}(f)$ the set of Arf numerical semigroups with Frobenius number equal to $f$.
A numerical semigroup $S$ is saturated if for any $s, s_1,\ldots , s_r\in S$ such that $s_i\leq s$ for all $i\in[r]$, and $z_1,\ldots, z_r\in \mathbb Z$ such that $z_1s_1+\cdots+
z_rs_r \geq 0$, then $s+z_1s_1+\cdots+
z_rs_r \in S$. We denote by $\operatorname{Sat}(f)$ the set of all saturated numerical semigroups with Frobenius number $f$.
Both classes form subfamilies of $\MED$-semigroups, with the inclusion $\operatorname{Sat}(f) \subseteq \operatorname{Arf}(f)$ (see \cite[Proposition 3.12 and Lemma 3.31]{libroRosales}). The next result shows the connection between these families and $\CaA$-semigroups.

\begin{proposition}\label{prop:ARF-CaA}
\cite[Proposition 31]{rosales2023numerical}
Every Arf semigroup is an $\CaA$-semigroup.
\end{proposition}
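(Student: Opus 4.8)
The goal is to show that every Arf numerical semigroup $S$ is an $\CaA$-semigroup, that is, $S$ contains no two consecutive small elements. Equivalently, I must show that if $x, x+1 \in S$ and $x+1 \prec f = \Fb(S)$, then a contradiction arises. The plan is to exploit the defining Arf property: for $x, y, z \in S$ with $z \ge y \ge x$, one has $x + y - z \in S$. The key observation is that the Arf condition forces $S$ to be closed under a strong form of subtraction near any point where two consecutive elements appear, and this will let me "fill in" every integer above such a pair, contradicting the assumption that $x+1$ is a small element (i.e., that there is a gap above it).

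First I would suppose, for contradiction, that $x, x+1 \in S$ with $x+1 < f$. The idea is to prove by induction that every integer $n \ge x$ lies in $S$, which would force $f \in S$, contradicting $f = \Fb(S) \in \CaH(S)$. The inductive step is where the Arf property does the work: assuming $n, n+1 \in S$ with $n \ge x$, I want to deduce $n+2 \in S$. Here I would apply the Arf identity with a suitable choice of three elements. A natural attempt is to take the two known elements together with their sum: since $n, n+1 \in S$, also $2n, 2n+1, 2n+2 \in S$ by closure under addition, and more usefully $n+(n+1) = 2n+1 \in S$. Applying Arf to the triple $z = n+2$ (once shown to be reachable) is circular, so instead I would use $x+y-z \in S$ with elements already known to lie in $S$: for instance, taking $y = z = $ some large element of $S$ and $x$ adjacent, the combination $x + y - z$ recovers $x$, which is not yet progress, so the correct maneuver is to pick $z$ and $y$ so that $x + y - z = n+2$.

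The cleanest route is as follows. Since $n, n+1 \in S$, consider the elements $2(n+1) = 2n+2$ and $2n+1 = n + (n+1)$, both in $S$, and $2n+2 \ge 2n+1 \ge n+1$. Applying the Arf property to $x' = n+1$, $y' = 2n+1$, $z' = 2n+2$ yields $x' + y' - z' = (n+1) + (2n+1) - (2n+2) = n \in S$, which is already known. To gain new elements I would instead feed the pair into Arf with a deliberately chosen third element whose presence is guaranteed: taking $x' = n$, $y' = 2n+2$, $z' = 2n+1$ is not permitted since $y' > z'$. The valid and productive choice is $z' = 2n+2 \ge y' = 2n+1 \ge x' = n+1$, giving $(n+1)+(2n+1)-(2n+2) = n$ again; so the honest inductive increment comes from a more careful base: I expect that once $x, x+1 \in S$, applying Arf repeatedly to translates $x+kn$ and $x+1+kn$ generated by addition produces every residue and hence an unbroken tail, so that no gap can survive above $x$.

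\textbf{Main obstacle.} The crux is selecting, at each stage, a valid Arf triple $z \ge y \ge x$ whose combination $x+y-z$ yields a genuinely new element, rather than recovering something already known; the inequality constraint $z \ge y \ge x$ is restrictive, and a naive choice tends to be circular. I expect the resolution is to first use closure under addition to manufacture arbitrarily large elements of $S$ lying in the arithmetic progression determined by the consecutive pair $\{x, x+1\}$, and then apply Arf "downward" from those large elements to sweep out every integer $\ge x$, thereby contradicting $f \in \CaH(S)$. Making this sweep precise — identifying exactly which large elements to start from and verifying the monotonicity hypotheses at each step — is the one part that requires genuine care rather than routine bookkeeping.
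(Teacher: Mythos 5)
Your proposal is not a complete proof: the inductive step you need ($n,n+1\in S \Rightarrow n+2\in S$) is never actually established, and you say so yourself ("I expect the resolution is to\ldots", "requires genuine care"). Every Arf triple you try ($(n+1)+(2n+1)-(2n+2)=n$, etc.) only recovers elements already known, and the closing idea of manufacturing large elements of $S$ and sweeping downward is left entirely unexecuted. The missing observation is that the Arf condition may be applied to a triple with \emph{repeated} entries: writing the Arf property in its standard orientation (for $u\ge v\ge w$ in $S$ one has $u+v-w\in S$; the inequality in the paper's statement of the definition has the roles of the extremes transposed), take $u=v=n+1$ and $w=n$. Then $u\ge v\ge w$ holds and $u+v-w=(n+1)+(n+1)-n=n+2\in S$. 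This single line closes your induction: if $x,x+1\in S$ then $x+k\in S$ for all $k\ge 0$, so $\Fb(S)<x$ and hence $x$ and $x+1$ cannot both be small elements, which is exactly the $\CaA$-property. All of your attempted triples used only pairwise distinct elements obtained as sums, which is why they were circular; allowing $u=v$ is what makes the argument go through.

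For context, the paper itself offers no proof of this proposition --- it is quoted from \cite[Proposition 31]{rosales2023numerical} --- so there is no in-paper argument to compare routes with; the one-line triple above is the standard argument and is the one your sketch is groping toward.
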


Combining Proposition~\ref{prop:ARF-CaA}, with \cite[Proposition 2.7]{moreno2023set}, and \cite[Proposition 11]{rosales2024covariety}, we obtain the following.

\begin{proposition}
Let $f\in \mathbb N$. Then,
\[
\operatorname{Sat}(f)\subseteq \operatorname{Arf}(f)\subseteq \CaA(f)\]
is a chain of covarieties such that $\Delta(f)=\min_\subseteq (\operatorname{Sat}(f))=\min_\subseteq (\operatorname{Arf}(f))= \min_\subseteq (\CaA(f))$.
\end{proposition}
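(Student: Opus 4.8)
The plan is to establish the statement in two parts: first the chain of inclusions, and then the common-minimum claim. For the inclusions, I would read off $\operatorname{Sat}(f)\subseteq\operatorname{Arf}(f)$ directly from the already-quoted fact that saturated semigroups form a subfamily of Arf semigroups (the paper cites \cite[Proposition 3.12 and Lemma 3.31]{libroRosales} for exactly this), and then $\operatorname{Arf}(f)\subseteq\CaA(f)$ from Proposition~\ref{prop:ARF-CaA}, which says every Arf semigroup is an $\CaA$-semigroup. Since all three families are defined with the \emph{same} fixed Frobenius number $f$, passing to the subscripted-by-$f$ versions is immediate: an Arf semigroup with Frobenius number $f$ is in particular an $\CaA$-semigroup with Frobenius number $f$, hence lies in $\CaA(f)$, and similarly for the saturated case. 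This part is essentially bookkeeping on top of the cited results.

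The substantive content is that each of the three families is a covariety (not merely a subfamily of one), so I would verify conditions (i)--(iii) of the covariety definition for $\operatorname{Sat}(f)$ and $\operatorname{Arf}(f)$ by invoking the external references the paper flags, namely \cite[Proposition 2.7]{moreno2023set} and \cite[Proposition 11]{rosales2024covariety}, which presumably state precisely that $\operatorname{Arf}(f)$ and $\operatorname{Sat}(f)$ are covarieties with minimum $\Delta(f)$; for $\CaA(f)$ this is already our Proposition~\ref{A(f)covariedad}. Thus the phrase ``chain of covarieties'' is justified term by term once these three covariety facts are in hand.

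For the common-minimum assertion, I would argue that $\Delta(f)$, the ordinary semigroup $\{0\}\cup\{x\in\CaC\mid x\succ f\}$, lies in $\operatorname{Sat}(f)$, since it is trivially saturated (it contains every element above $f$ and only $0$ below, so the defining closure condition is vacuous in the relevant range) and has Frobenius number $f$. Because $\Delta(f)=\min_{\subseteq}\CaA(f)$ by Proposition~\ref{A(f)covariedad} and $\operatorname{Sat}(f)\subseteq\operatorname{Arf}(f)\subseteq\CaA(f)$, any element of $\operatorname{Sat}(f)$ or $\operatorname{Arf}(f)$ contains $\Delta(f)$; hence $\Delta(f)$, being itself a member of both smaller families, is their minimum as well. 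This forces $\Delta(f)=\min_{\subseteq}(\operatorname{Sat}(f))=\min_{\subseteq}(\operatorname{Arf}(f))=\min_{\subseteq}(\CaA(f))$.

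The main obstacle is not conceptual but referential: the whole argument rests on the three cited covariety results and on Proposition~\ref{prop:ARF-CaA}, so the real work is confirming that \cite[Proposition 2.7]{moreno2023set} and \cite[Proposition 11]{rosales2024covariety} indeed give the covariety property \emph{with minimum $\Delta(f)$} rather than merely closure under intersection or a weaker structural statement. If either reference supplies only part of the covariety axioms, I would need to supplement it by checking the missing axiom directly—most plausibly axiom (iii), that removing the multiplicity from a non-minimal Arf (resp. saturated) semigroup with Frobenius number $f$ keeps it Arf (resp. saturated) with the same Frobenius number. That verification uses the explicit Arf/saturated defining conditions and the fact that deleting the multiplicity leaves the Frobenius element unchanged; it is routine but is the only place where a genuine computation might be hidden.
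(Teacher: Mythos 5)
Your proposal is correct and follows essentially the same route as the paper, which proves this proposition purely by combining Proposition~\ref{prop:ARF-CaA} with the cited results \cite[Proposition 2.7]{moreno2023set} and \cite[Proposition 11]{rosales2024covariety}. Your additional verification that $\Delta(f)$ lies in $\operatorname{Sat}(f)$ and hence is the common minimum is a reasonable filling-in of detail the paper leaves implicit.
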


\subsection*{Funding}
The last author is partially supported by grant PID2022-138906NB-C21 funded by MICIU/AEI/ 10.13039/501100011033 and by ERDF/EU.

Consejería de Universidad, Investigación e Innovación de la Junta de Andalucía project ProyExcel\_00868 and research group FQM343 also partially supported all the authors.

This publication and research have been partially granted by INDESS (Research University Institute for Sustainable Social Development), Universidad de Cádiz, Spain.

\subsection*{Author information}
J.C. Rosales. Departamento de \'{A}lgebra, Universidad de Granada, E-18071 Granada, (Granada, Spain).
E-mail: jrosales@ugr.es.

\medskip

\noindent
R. Tapia-Ramos. Departamento de Matem\'aticas, Universidad de C\'adiz, E-11406 Jerez de la Frontera (C\'{a}diz, Spain).
E-mail: raquel.tapia@uca.es.

\medskip

\noindent
A. Vigneron-Tenorio. Departamento de Matem\'aticas/INDESS (Instituto Universitario para el Desarrollo Social Sostenible), Universidad de C\'adiz, E-11406 Jerez de la Frontera (C\'{a}diz, Spain).
E-mail: alberto.vigneron@uca.es.

\subsection*{Data Availability}
The authors confirm that the data supporting some findings of this study are available within it.

\subsection*{Conflict of Interest}
The authors declare no conflict of interest.

\end{document}